\documentclass[12pt,a4paper]{amsart}

\usepackage[margin=0.7in]{geometry}
\usepackage[T1]{fontenc}
\usepackage[utf8]{inputenc}
\usepackage{lmodern}
\usepackage{amssymb}
\usepackage{upref}
\usepackage{mathrsfs}
\usepackage{nicematrix}
\usepackage{tikz}
\usepackage{hyperref}
\hypersetup{%
    colorlinks = true,
    citecolor  = blue,
    filecolor  = magenta,
    linkcolor  = blue,
    urlcolor   = blue,
    pdfauthor  = {Biagio Cassano \& Giovanni Taglialatela},
    pdftitle   = {The Cauchy problem for the biwave operator},
    pdfpagemode = UseOutlines|UseThumb,
    pdfstartview = FitH,
    bookmarksopen = true,
    bookmarksnumbered = true,
    bookmarksopenlevel = {1},
    linktocpage = true,
    pdfsubject  = {The Cauchy problem for the biwave operator},
    pdfcreator  = {-},
    pdfproducer = {-},
}
\usepackage{cleveref}
\usepackage{verbatim}
\usepackage[normalem]{ulem}
\usepackage{graphicx}

\makeatletter

\def\paragraph{%
	\@startsection{paragraph}{4}{\z@}%
		{1ex\@plus.1ex \@minus.2ex}%
		{-1em}%
		{\normalfont\normalsize\scshape}}

\def\subparagraph{%
	\@startsection{paragraph}{4}{\z@}%
		{1ex\@plus.1ex \@minus.2ex}%
		{-1em}%
		{\normalfont\normalsize\itshape}}

\makeatother

\theoremstyle{plain}
\newtheorem{Theorem}{Theorem}
\newtheorem*{Theorem*}{Theorem}
\newtheorem{Proposition}{Proposition}[section]

\newtheorem{Lemma}[Proposition]{Lemma}
\theoremstyle{definition}

\newtheorem{Definition}{Definition}
\theoremstyle{remark}
\newtheorem{Remark}{Remark}
\newtheorem{Example}{Example}
\newtheorem*{Notations}{Notations}

\everymath{\displaystyle}
\allowdisplaybreaks
\numberwithin{equation}{section}

\newcommand{\labelp}[1]{\tag{\ref{#1}$'$}\label{#1p}}
\newcommand{\labelpp}[1]{\tag{\ref{#1}$''$}\label{#1pp}}

\newcommand{\defeq}{\overset{\textnormal{\,{\tiny def}\,}}{=}}

\newcommand{\C}{\mathbb{C}}
\newcommand{\N}{\mathbb{N}}
\renewcommand{\S}{\mathbb{S}}
\newcommand{\R}{\mathbb{R}}

\newcommand{\e}{\varepsilon}

\let\Re\undefinied
\DeclareMathOperator{\Re}{Re}
\let\Im\undefinied
\DeclareMathOperator{\Im}{Im}
\newcommand{\wh}[1]{\widehat{#1}}
\newcommand{\abs}[1]{\left|#1\right|}
\def\<#1\>{\left\langle#1\right\rangle}

\newcommand{\Dfrac}[2]{{%
  \ooalign{%
    $\genfrac{}{}{1.5pt}0{#1}{#2}$\cr%
    $\color{white}\genfrac{}{}{.6pt}0{\phantom{#1}}{\phantom{#2}}$}%
}}

\newcommand{\zeroT}{[0,T]}
\newcommand{\KK}{\mathcal{K}}

\newcommand{\LL}{\mathsf{L}}
\newcommand{\PP}{\mathsf{P}}
\newcommand{\RR}{\mathsf{R}}
\newcommand{\PPP}[2]{\PP_{#1}^{#2}}

\newcommand{\LG}[1]{\mathrm{L}_{#1}}

\newcommand{\aae}{a_\varepsilon}
\newcommand{\bbe}{b_\varepsilon}
\newcommand{\deltae}{\delta_\varepsilon}

\renewcommand{\AA}{\mathscr{A}}
\newcommand{\CC}{\mathscr{C}}

\newcommand{\QQe}{\mathscr{Q}_\varepsilon}

\newcommand{\EEe}{\mathscr{E}_\varepsilon}
	
\newcommand{\Atwo}{\mathbf{A}}
\newcommand{\Itwo}{\mathbf{1}}
\newcommand{\Ztwo}{\mathbf{0}}
\newcommand{\Qtwo}{\mathbf{Q}}

\newcommand{\Rtwoe}{\mathbf{R}_\e}

\newcommand{\I}{{\normalfont\textsc{i}}}
\newcommand{\II}{{\normalfont\textsc{ii}}}
\newcommand{\III}{{\normalfont\textsc{iii}}}

\begin{document}

\title{On the Gevrey well-posedness for weakly hyperbolic biwave type equations}
\author[B.~Cassano \textit{\&} G.~Taglialatela]%
       {Biagio~Cassano \textit{\&} Giovanni~Taglialatela}
\address{B.~Cassano,
	Department of Mathematics and Physics,
	Universit\`{a} degli  Studi della Campania,
	viale Lincoln 5,
	81100 Caserta,
	Italy}
\email{biagio.cassano@unicampania.it}

\address{G.~Taglialatela,
	Dipartimento di Economia e Finanza,
	Universit\`{a} di~Bari Aldo Moro,
	Largo Abbazia S.~Scolastica,
	70124 Bari,
	Italy}
\email{giovanni.taglialatela@uniba.it}

\subjclass[2020]{35L30 IVP for higher-order hyperbolic equations}
\keywords{Biwave equation, Bidalambertian equation,
    Weakly hyperbolic equations, Gevrey well-posedness,
    Quasi-symmetrizer}

\begin{abstract}
We study the Cauchy problem for~biwave type equations
with coefficients depending only on time.
Assuming $\mathcal{C}^\kappa$ regularity of~the principal symbol
and suitable Levi conditions on the lower order terms,
we~establish well-posedness in~Gevrey spaces.
\end{abstract}

\maketitle

\section{Introduction}

We~consider the Cauchy problem for fourth order linear
operators with coefficients depending only on the time variable
\begin{align}
&  \PP(t;\partial_t,\partial_x) u(t,x)
          +  \RR(t;\partial_t,\partial_x) u(t,x)
   =  f(t,x) \, ,  &
   &  (t,x) \in \zeroT \times \R^n \, , \label{E-eq}
\intertext{%
with initial conditions}
&  \partial_t^j u(t_0,x) = u_j(x)
\qquad
j=0,\dotsc,3 \, ,  &
   &  x \in \R^n \, , \label{E-IC}
\end{align}
on the initial surface $t = 0$.
In our analysis, $\PP(t;\partial_t,\partial_x)$
is a biwave type operator with time dependent coefficients:
\begin{equation} \label{E-biwave}
\PP(t;\partial_t,\partial_x)
  \equiv  \partial_t^4
          -  2\sum_{|\nu|=2}
             b_\nu(t) \, \partial_t^2 \, \partial_x^\nu
          +  \sum_{|\nu|=4}
             a_\nu(t) \, \partial_x^\nu \, ,
\end{equation}
and $\RR(t;\partial_t,\partial_x)$ is an operator of~order $\le3$:
\begin{equation} \label{E-RR}
\RR(t;\partial_t,\partial_x)
  =  \sum_{j=0}^3 \RR_j(t;\partial_t,\partial_x) \, ,
\qquad
\RR_j(t;\partial_t,\partial_x)
  \equiv  \sum_{k+|\nu|=j} c_{k,\nu}(t) \, \partial_t^k \, \partial_x^\nu \, .
\end{equation}

In particular, we consider in \eqref{E-eq} equations of the type
\begin{equation} \label{E-basic}
\partial_t^4 u(t,x) - 2 \, b(t) \, \partial_t^2 \, \Delta u(t,x)
           + a(t) \, \Delta^2 u(t,x) = f(t,x) \, ,
\end{equation}
where $a$ and $b$ are $\mathscr{C}^\kappa$ functions; if $a\equiv b\equiv 1$,
equation~\eqref{E-basic} reduces to the so-called \emph{biwave} equation
\[
\Box^2 u(t,x) = f(t,x) \, ,
\]
where $\Box = \partial_t^2-\Delta$ is the classical wave operator. 

\smallskip

In this paper we are interested
in~finding suitable conditions on the coefficients
in~\eqref{E-eq} to guarantee well-posedness of~the Cauchy Problem~\eqref{E-eq}--\eqref{E-IC}
in~the Gevrey spaces.
We recall the definition of~the latter ones:

\begin{Definition}
Let $\Omega$ be an open set in~$\R^n$, $s \ge 1$ and $f\in\mathscr{C}^\infty(\Omega)$.
We say that $f$ belongs to the Gevrey space~$\gamma^s(\Omega)$
if for any compact set~$\KK \subset\joinrel\subset \Omega$ there exist
positive constants $C_\KK$ and $R_\KK$ such that
\[
\bigl|\partial_x^\alpha f(x)\bigr|
  \le  C_\KK \, R_\KK^{-|\alpha|} \, |\alpha|!^s \, .
\]
In the following we will note, for short, $\gamma^s=\gamma^s(\R^n)$.
\end{Definition}
Note that $\gamma^1(\Omega)$ is the space of~the real analytic functions in~$\Omega$,
$\gamma^1 = \gamma^1(\R^n)$ is the space of~the entire functions in~$\R^n$.
The index $s$ in $\gamma^s(\Omega)$ acts as a continuous mathematical gauge, bridging the gap between analyticity and $ \mathscr{C}^\infty$-smoothness;
note, however that $\bigcup_{s\ge1} \gamma^s \ne \mathcal{C}^\infty$.

We give a definition of~well-posedness suitable
for the Problem~\eqref{E-eq}--\eqref{E-IC}.

\begin{Definition}
Problem~\eqref{E-eq}--\eqref{E-IC} is said to be~\emph{uniformly well-posed in~$\gamma^s$}
if, for any $t_0\in\zeroT$ and for any $u_0(x),\dotsc,u_3(x)$ in~$\gamma^s(\R^n)$,
it admits a unique solution $u(t,x)$ in~$\mathscr{C}^3\bigl(\zeroT ; \gamma^s(\R^n)\bigr)$.
\end{Definition}

We recall that,
in order the Cauchy problem to be well-posed in~$\mathscr{C}^\infty$
or~in Gevrey classes,
the operator should be~\emph{weakly hyperbolic}
\cite{Lax-1957,Mizohata-1961,IvriiPetkov-1974,Nishitani-1978}:
the characteristic roots,
that is the solutions in~$\tau$ of~the equation
\[
\PP(t;\tau,\xi)=0 \, ,
\label{eq:characteristic.roots}
\]
are real for any $t \in \zeroT$ and $\xi \in \R^n$.

In particular, as can be seen
by an explicit computation, a biwave operator as in~\eqref{E-biwave}
is weakly hyperbolic if the following conditions hold true:
\begin{equation} \label{E-WH}
b(t;\xi) \ge 0 \, ,
\quad
a(t;\xi) \ge 0 \, ,
\quad
\delta(t;\xi) \ge 0 \, ,
\quad
\text{for any $t \in \zeroT$ and $\xi \in \S^{n-1}$} \, \tag{H},
\end{equation}
where
\begin{align*}
b(t;\xi)
&  \defeq  \sum_{|\nu|=2} b_\nu(t) \, \bigl(\xi/|\xi|\bigr)^\nu \, ,  \\
a(t;\xi)
&  \defeq  \sum_{|\nu|=4} a_\nu(t) \, \bigl(\xi/|\xi|\bigr)^\nu \, ,  \\
\delta(t;\xi)
&  \defeq  b^2(t;\xi) - a(t;\xi) \, ,
\intertext{%
and}
\S^{n-1}
&  \defeq  \Bigl\{ \ \xi\in\R^n \ \Bigm| \ |\xi| = 1 \ \Bigr\} \, .
\end{align*}

We are interested in the relation between the regularity of~the coefficients
and the Gevrey spaces in which the Cauchy problem is well-posed.
This issue has been considered in~\cite{CDGS}
for second order wave-type equations of~the form
\begin{equation} \label{E-CP2}
u_{tt} - \sum_{j,k=1}^n a_{jk}(t) u_{x_jx_k} = 0 \, ,
\qquad
\text{for $(t,x) \in \zeroT\times\R^n$} \, .
\end{equation}
Assuming that the equation is \emph{strictly} hyperbolic,
namely that the characteristic roots are real and simple,
which is equivalent to
\begin{equation} \label{E-sh2}
\sum_{j,k=1}^n a_{jk}(t) \xi_j\xi_k
  \ge  \theta |\xi|^2 \, ,
\qquad
\text{for $(t;\xi) \in \zeroT\times\R^n$} \, ,
\end{equation}
where $\theta>0$,
in~\cite{CDGS} it is proved that
if~the coefficients $\{a_{jk}\}_{j,k=1,\dotsc,n}$ belong
to~$\mathscr{C}^{0,\alpha}\bigl(\zeroT\bigr)$
then the Cauchy problem associated to~\eqref{E-CP2}
is~well-posed in~$\gamma^s$ with
\begin{equation} \label{E-CDGS}
1<s< \frac{1}{1-\alpha} \, .
\end{equation}
They~also showed, by constructing suitable counterexamples,
that the upper bound in~\eqref{E-CDGS} is sharp.
This result has been extended in~\cite{Nishitani-1983} and~\cite{Nishitani-2006}
to~general strictly hyperbolic second order equations and
in~\cite{Jannelli-1985} and~\cite{Cicognani-1998} to~general strictly hyperbolic systems
of any order.
Observe, in particular, that this includes the study of the well-posedness for~\eqref{E-eq}
in the strictly hyperbolic setting.

The situation (of course) worsens if~\eqref{E-CP2} is only \emph{weakly} hyperbolic,
that is if Hypothesis~\eqref{E-sh2} is replaced by
\begin{equation} \label{E-wh2}
\sum_{j,k=1}^n a_{jk}(t) \xi_j\xi_k \ge 0 \, ,
\qquad
\text{for $(t;\xi) \in \zeroT\times\R^n$} \, .
\end{equation}

In fact, in~\cite{CJS} it is proved that if~the coefficients
$\{a_{jk}\}_{j,k=1,\dotsc,n}$ belong to~$\mathscr{C}^{k,\alpha}\bigl(\zeroT\bigr)$
with $k\in\N$ and $0\le \alpha \le 1$,
then the Cauchy problem associated to~\eqref{E-CP2}
is well-posed in~the Gevrey class $\gamma^s$ with
\begin{equation} \label{E-CJS}
1<s<1+\frac{k+\alpha}{2} \, ,
\end{equation}
whereas it is $\mathscr{C}^\infty$ well-posed
if~the coefficients $\{a_{jk}\}_{j,k=1,\dotsc,n}$ are analytic on $\zeroT$.
Here~too, the upper bound in~\eqref{E-CJS} is sharp.

\smallskip

The above result is specific of~the wave-type equation~\eqref{E-CP2}.
Indeed, concerning higher order equations,
we~recall that Ohya and Tarama~\cite{OhyaTarama-1986,OhyaTarama-1987,OhyaTarama-2006}
proved that if~a~weakly hyperbolic operator
has~characteristic roots of~multiplicity at~most~$r$
and
the coefficients are~$\mathscr{C}^{k,\alpha}$ functions in~$t$
($k=0,1$ and $\alpha\in[0,1]$)
and~$\gamma^s$ in~$x$,
then the associated Cauchy problem is well-posed in~$\gamma^s$,
provided that
\begin{equation} \label{E-OT}
1
  \le  s
   <   \min\Bigl(1+\frac{\kappa+\alpha}{r} ,
                   \frac{r}{r-1}\Bigr) \, .
\end{equation}

The Ohya-Tarama result has been extended
in~\cite{Kinoshita-1998b} to block diagonalizable systems,
in \cite{DAnconaKinoshitaSpagnolo-2004}
to $2\times2$ and~$3\times3$ systems with time dependent coefficients,
in~\cite{Yuzawa-2005} to $N\times N$ systems with time dependent coefficients
and finally in~\cite{KajitaniYuzawa-2006a} to general $N\times N$ systems
(see also~\cite{KajitaniYuzawa-2006b} for non-linear $N\times N$ systems).

\smallskip

Note that the second bound in~\eqref{E-OT},
which is not present in~\eqref{E-CJS},
could not be omitted.
Indeed, it is well-known that for general equations
the regularity of~the coefficients is not enough
to~get the well-posedness in~$\mathscr{C}^\infty$
or in~$\gamma^s$ for high values of~$s$.
In fact, even if the coefficients are~constant,
well-posedness in~$\mathscr{C}^\infty$ or in high Gevrey classes
fails to hold without additional conditions.
More precisely, the following condition
on the roots~$\lambda=\lambda(\xi)$ of~the~\emph{full} symbols
\[
\LL(\lambda,\xi)
  \equiv  \PP(\lambda,\xi) + \RR(\lambda,\xi)
\]
is~\emph{necessary and sufficient}
for the Cauchy Problem to be well-posed in~$\mathscr{C}^\infty$
(see~\cite{Garding-1951}~\cite[\textsection 12.5]{Hormander}~\cite{Svensson-1969}):
\begin{align}
&
\textsl{there exists~$C>0$ such that
    $\bigl|\Im\lambda(\xi)\bigr| \le C$,
    for any $\xi\in\R^n$} \, . \tag{G} \label{E-Garding}
\intertext{\endgraf
\noindent
Analogously,
the following condition is~\emph{necessary and sufficient}
for the Cauchy Problem to be well-posed in~$\gamma^s$
(see~\cite{Larsson-1967}):}
&
\textsl{there exists~$C>0$ such that
    $\bigl|\Im\lambda(\xi)\bigr| \le C \, \bigl(1+|\xi|^{\frac{1}{s}}\bigr)$,
    for any $\xi\in\R^n$} \, . \tag{L} \label{E-Larsson}
\end{align}
{}From~\eqref{E-Garding} and~\eqref{E-Larsson}
we see that the Cauchy problem associated to the equation
\begin{equation} \label{E-es1}
\partial_{tt} u - \partial_x u = 0
\end{equation}
is ill-posed in~$\mathscr{C}^\infty$
and in~$\gamma^s$, for $s>2$,
since the solution to the equation
\[
\lambda^2 - \xi = 0
\]
does not verify~\eqref{E-Larsson} for $s>2$
and, \emph{a fortiori},~\eqref{E-Garding}.
Similarly,
we see that the Cauchy problem associated to the equation
\begin{equation} \label{E-es1m}
\partial_t^m u - \partial_x^{m-1} u = 0
\end{equation}
is ill-posed in~$\mathscr{C}^\infty$
and in~$\gamma^s$ for $s>\frac{m}{m-1}$.

Equations~\eqref{E-es1} and~\eqref{E-es1m} above show that
if the characteristic roots have multiplicities,
the~presence of~the lower order terms may affect
the $\gamma^s$-well-posedness for high values of~$s$.
Note that,
if the operator has constant coefficients and is homogeneous,
that is $\RR(\tau,\xi)\equiv0$,
the hyperbolicity is necessary and sufficient
for the well-posedness in~$\mathscr{C}^\infty$ and in~Gevrey classes.
Indeed, as $\LL(\tau,\xi)\equiv\PP(\tau,\xi)$,
the $\lambda=\lambda(\xi)$ in~\eqref{E-Garding} and~\eqref{E-Larsson}
are $1$-homogeneous w.r.t.~$\xi$,
hence~\eqref{E-Garding} and~\eqref{E-Larsson}
reduce\sout{d} to the hyperbolicity of~the principal symbol.
In the variable coefficients case,
the hyperbolicity of~the principal symbol is no longer sufficient
for the well-posedness
even if the coefficients are very regular.
Indeed the Cauchy problem for
\begin{equation} \label{E-es2}
\partial_{tt} v
  +  2 \, t \, \partial_{tx} v
  +  t^2 \, \partial_{xx} v =  0
\end{equation}
is ill-posed in~$\mathscr{C}^\infty$ and in~$\gamma^s$ for $s>2$,
since Equation~\eqref{E-es2} is equivalent to~\eqref{E-es1},
in the sense that $u(t,x)$ solves~\eqref{E-es2} if, and only if,
$v(t,x) = u(t,x+t^2/2)$ solves~\eqref{E-es1}.

\smallskip
\goodbreak

{}From the above considerations,
we~deduce that,
in~order to obtain well-posedness results
(in~$\mathscr{C}^\infty$ or in~$\gamma^s$, for high values of~$s$),
we must assume four type of~conditions:
\begin{itemize}
\item  hyperbolicity of~the principal symbol,
\item  regularity of~the coefficients of~the principal symbol,
\item  algebric conditions on the principal symbol,
\item  algebric conditions on the lower order terms (usually called \emph{Levi conditions}).
\end{itemize}

Second order equations as well as $2\times2$ systems have been studied
in~\cite{DAbbiccoTaglialatela-2009} and~\cite{DAnconaKinoshitaSpagnolo-2008},
whereas
third order equations have been considered
first in~\cite{Kinoshita-1998a} then in~\cite{JannelliTaglialatela-2014}.
For fourth and higher order equations the situations becomes more complicated,
and we have only few results.

In~\cite{ColombiniKinoshita-2002},
Colombini and Kinoshita considered a homogeneous biwave type operator as in~\eqref{E-biwave}
and they gave a refinement of~Ohya-Tarama's Theorem:
instead of~$b \in \mathscr{C}^{0,\alpha}$,
they assumed that $b^2 \in \mathscr{C}^{0,\alpha}$,
which is a slightly weaker condition (cf.~\mbox{{\cite[pag.~40]{ColombiniKinoshita-2002}})}.

In~\cite{KinoshitaSpagnolo-2006},
Kinoshita and Spagnolo considered homogeneous equations of~order $m$
with time depending coefficients.
Besides the hyperbolicity and the $\mathscr{C}^{k,\alpha}$ regularity of~the coefficients
they assumed the so-called Colombini-Orr\`{u} condition (see~\cite{ColombiniOrru-1999})
on the characteristic roots~$\tau_j$:
there exists $C>0$ such that
\begin{equation} \label{E-CO}
\tau_j^2(t;\xi) + \tau_k^2(t;\xi)
     \le  C \bigl(\tau_j(t;\xi) - \tau_k(t;\xi)\bigr)^2 \, ,
\quad
\forall t\in[0,T] \, , \ \forall \xi\in\R^n \, .
\end{equation}
and they proved that the Cauchy problem is well-posed in~$\gamma^s$
provided that
\begin{equation} \label{E-JT2-bound s}
1\le s < 1+\frac{k+\alpha}{2\,(m-1)} \, .
\end{equation}
We remark that,
refining the estimates in~\cite{KinoshitaSpagnolo-2006}
the upper bound in~\eqref{E-JT2-bound s} can be improved
by replacing the order~$m$ of~the operator
with the maximal multiplicity of~the characteristic roots.
However,
the upper bound for the admissible~$s$ in~\eqref{E-JT2-bound s}
is smaller than the upper bound in~\eqref{E-OT} if $m\ge3$.

\smallskip

Other results concerning uniformly diagonalizable systems
have been obtained in~\cite{Tarama-1994,Colombini-Nishitani-1999,Kajitani-1988,ColombiniMetivier-2018},
but these results are not applicable to scalar equations.
Indeed,
when transforming the scalar equation
\[
\PP u \equiv \partial_t^m u + a_1 \, \partial_t^{m-1} \partial_x u + \dotsb + a_m \, \partial_x^m u
   +  \text{l.o.t.} = 0
\]
into a system,
via the usual transformation
$U = \bigl( \partial_x^{m-1} u , \partial_t \partial_x^{m-2} u , \dotsc , \partial_t^{m-1} u \bigr)^{\mathsf{T}}$,
we get the system
\[
\partial_t U - A \partial_x U +  \text{l.o.t.} = 0 \, ,
\]
where the matrix $A$ has the Sylvester form:
\[
\setlength{\arraycolsep}{3pt}
A = \begin{pNiceMatrix}
    0 & 1 & 0 & \Cdots & 0  \\
      & \Ddots & \Ddots & \Ddots & \Vdots  \\
    \Vdots &   &        &   & 0 \\
    0 & \Cdots &  & 0  & 1  \\
    a_m &  & \Cdots &  & a_1
	\end{pNiceMatrix} \, .
\]
The eigenvalues of~$A$ equal the characteristic roots of~$\PP$.
Since each eigenvalue of~a Sylvester matrix has a one-dimensional eigenspace,
as is easily verified,
the matrix fails to be diagonalizable whenever multiple roots are present.

\smallskip

One of~the difficulties in considering higher order equations
is that even the condition of~hyperbolicity
is much more involved than that for second or third order equations.
Indeed, recall that for a generic monic polynomial
\[
p(\tau) = \prod_{j=1}^m (\tau - \tau_j)
\]
the~\emph{discriminant} is defined by
\[
\Delta_m \defeq \prod_{1\le j<\ell\le m} (\tau_j-\tau_\ell)^2 \, ,
\]
and it is nonnegative if all the $\tau_j$s are real.
For polynomial of~order 2 or 3
the nonnegativity of~the discriminant is also sufficient
in order to have only real roots,
but this is no longer true for polynomials of~order $\ge 4$.
Consider, for instance, the polynomial
\[
p(\tau) = \tau^4 + 5\,\tau^2 + 4
\]
which has only complex roots:
\[
\tau_1 = i \, , \qquad
\tau_2 = -i \, , \qquad
\tau_3 = 2\,i \, , \qquad
\tau_4 = -2\,i \, ,
\]
but its discriminant is positive:
\[
\Delta_4 \defeq \prod_{1 \le j < \ell \le 4} (\tau_j-\tau_\ell)^2 = 5184 > 0 \, .
\]
On the other side,
even for the simple biquadratic polynomial
\[
p(\tau) = \tau^4 - 2\,b\,\tau^2 + a \, ,
\]
the hyperbolicity condition cannot be expressed by less than $3$ inequalities:
\[
a\ge0 \, , \qquad
b\ge0 \, , \qquad
b^2-a \ge 0 \, .
\]

For these reasons,
we restrict our study to biwave type equations,
avoiding the general fourth-order case due to its complexity.
This specific choice is well-motivated by the interest of the scientific community
\cite{Arosio-2001,ANP-1992,ANPP-1992,APP-1992a,APP-1992b,KorzyukCheb2007,KorzyukRudzko2025,Yaqian2024}.
In particular, the biwave equation has applications in elasticity theory, 
see~\cite{Hetnarski-2004, Muskhelishvili-2010, BARCELO201533,CASSANO2021528}.
Indeed, consider the~\emph{displacement equation of~motion} or~\emph{Navier's equation}
\begin{equation} \label{eq:Navier}
\rho \partial_t^2 u = \mu \Delta u + (\lambda + 2 \mu) \nabla (\operatorname{div} u) + b \, ,
\end{equation}
being $u = u (x,t) \in \R^3$ the displacement,
$b= b(x,t) \in \R^3$ the force per unit volume by external agents on~$x$ at time $t$,
$\rho > 0$ the density of~the material and $\lambda,\mu \in \R$, $\mu >0$, $\lambda+2\mu>0$
the Lam\'{e} coefficients describing the elasticity properties of~the medium.
If  $g=g(x,t) \in \R^3$ is solution to the biwave equation
\begin{equation}\label{eq:biwave-CKS}
\left(\frac{\rho}{\lambda + 2\mu} \partial_{t}^2 - \Delta   \right)
\left(\frac{\rho}{\mu} \partial_{t}^2  - \Delta \right) g
   =  - \frac{b}{\mu},
\end{equation}
then
\[
u
  :=  \left(\Delta - \frac{\rho}{\lambda + 2\mu} \partial_t^2\right) g
      -  \frac{\lambda + \mu}{\lambda + 2 \mu} \nabla \operatorname{div} g
\]
is the \emph{Cauchy-Kovalevski-Somigliana solution} to~\eqref{eq:Navier}
(see~\cite[Section 7.2 (U3)]{Hetnarski-2004} for more details).
Additionally, the biwave equation (and its perturbed or non-linear variants) frequently appears in condensed matter physics as a time-dependent simplified Ginzburg-Landau-type model for $d$-wave superconductors (typically in the absence of an applied magnetic field), see \cite{feng2010finite} and references therein.

\smallskip

The paper is organized as follows:
Section 2 outlines the hypotheses and states the main results;
Section 3 provides the proofs
and
Section 4 is dedicated to establishing the properties
of the quasi-symmetrizer needed in the proofs.

\section{Statements of~the results}

In this section we present and discuss our results. To state them, we need to introduce some notations.

\begin{Notations} \hfill
\begin{itemize}
\item  For any function $f(t;\xi)$
        we denote $\partial_t f(t;\xi)$ by $f'(t;\xi)$;
        we also omit the $\xi$ variable to simplify the notations.

\item  For $f(t;\xi)$ and~$g(t;\xi)$ positive functions
       we~will write $f\lesssim g$
       (or, equivalently $g\gtrsim f$)
       to~mean that there exists a positive constant $C$
       such that
       \[
       f(t;\xi)
         \le  C \, g(t;\xi) \, ,
       \qquad
       \text{for any $(t;\xi)\in\zeroT\times\R^n$} \, .
       \]

\item  $\mathscr{C}^{k,\alpha}$
    is the usual vector space of~the $k$-times differentiable functions
    whose $k$-th derivative is $\alpha$-H\"{o}lder continuous.
    As customary in~this setting,
    \textit{e.g.}~\cite{DAbbiccoTaglialatela-2009} and~\cite{JannelliTaglialatela-2014},
    for $\kappa\in\R$ a positive number,
    we~denote by $\mathscr{C}^\kappa$
    the vector space of~the functions $\mathscr{C}^{k,\alpha}$
    where
    \[
    k \defeq \max\{ \, j \in\N \, | \, j<\kappa \, \}
    \]
    and $\alpha = \kappa-k$.

    Note that if $\kappa\in\N\setminus\{0\}$,
    then $\mathscr{C}^\kappa = \mathscr{C}^{\kappa-1,1}$.
\end{itemize}
\end{Notations}

Now we discuss the different hypotheses
(in~addition to~the hyperbolicity~\eqref{E-WH})
that we will need in~the statement of~our results.

\subsection*{Regularity hypotheses}

According to~\cite{CJS},
it is natural to assume that the coefficients of~the principal symbol are
in~$\mathscr{C}^\kappa$ to study the Cauchy problem in Gevrey spaces.

As already mentioned, in~\cite{ColombiniKinoshita-2002}
the well-posedness of~the biwave equation is treated
considering $\mathscr{C}^\kappa$ coefficients, with $0<\kappa\le1$.
Moreover, instead of~$b \in \mathscr{C}^\kappa$,
they assumed that $b^2 \in \mathscr{C}^\kappa$,
which is a slightly weaker condition (cf.~\mbox{\cite[pag.~40]{ColombiniKinoshita-2002}}).
Accordingly,
we will require regularity conditions on~$a$ and~$\delta$.

As customary in this setting,
(see e.g.~\cite{DAbbiccoTaglialatela-2009} and~\cite{JannelliTaglialatela-2014})
we can replace the~$\mathscr{C}^\kappa$ regularity
by a~weaker condition.
Indeed, one of~the main tools in~the proof of~the Gevrey well-posedness
is Lemma~1 in~\cite{CJS},
stating that if $f\in\mathscr{C}^\kappa\bigl(\zeroT\bigr)$
is nonnegative,
then $f^{\frac{1}{\kappa}}$ in~absolutely continuous in~$\zeroT$,
and there exists a constant $C$,
depending only on~$\kappa$ and~$T$,
such that
\[
\bigl|f'(t)\bigr|
  \le  \Lambda(t) \, \bigl[f(t)\bigr]^{1 - \frac{1}{\kappa}} \, ,
\qquad
\text{with $\|\Lambda\|_{L^1(\zeroT)}
              \le  C \, \|f\|_{\mathscr{C}^\kappa(\zeroT)}^{\frac{1}{\kappa}}$} \, .
\]

Thus, in~the following,
we shall consider a (generalized) regularity hypothesis of~the form
\begin{equation}
\bigl|f'(t;\xi)\bigr|
  \lesssim  \Lambda(t;\xi) \, \bigl|f(t;\xi)\bigr|^{1-\alpha} \, ,
\end{equation}
with the appropriate $\alpha >0$ and where, here and in the following,
$\Lambda(t;\xi)$ denotes a measurable function such that
\begin{equation} \label{E-Lambda}
\begin{cases}
\text{$\Lambda(t;\xi)$ is 0-homogeneous with respect to $\xi$} \, ,  \\*[2pt]
\Lambda(t;\xi) \ge 1 \quad \text{for any $t\in\zeroT$ and $\xi\in\R^n$} \, , \\*[2pt]
\sup_{\xi\in\S^{n-1}} \int_0^T \Lambda(t;\xi) \, dt \le C < +\infty \, .
\end{cases}
\end{equation}

\subsection*{Conditions on the principal symbol}

In~\cite{JannelliTaglialatela-2011}, a sufficient condition is given 
for the well-posedness in~$\mathscr{C}^\infty$
for homogeneous equations with time depending analytic coefficients;
this condition is also necessary in~space dimension $n=1$ (cf.~\cite{ColombiniOrru-1999}).
A~weaker form of~this condition has been used in~\cite{JannelliTaglialatela-2014}
to prove well-posedness in~Gevrey spaces
for third order equations.

\smallskip

In the case of~the biwave operator,
the condition found in~\cite{JannelliTaglialatela-2011}
boils down to:
\begin{equation} \label{E-check}
\bigl|b'(t;\xi)\bigr|
  \lesssim  \sqrt{\delta(t;\xi) \, }
                     +  \Bigl| \partial_t \sqrt{\delta(t;\xi) \, } \Bigr| \, .
\end{equation}
Condition~\eqref{E-check} means that if, for some fixed~$\xi$,
the function $t \mapsto \delta(t;\xi)$ is not identically zero
and vanishes for~$t=\overline{t}$ at order~$2k$,
then~$b'(t;\xi)$ must vanish for~$t=\overline{t}$ at order~$k-1$.
If we assume that
$b'(t;\xi)$ vanishes for~$t=\overline{t}$ at order less then~$k-1$
the $\mathcal{C}^\infty$-well-posedness cannot be obtained,
and we must aim to Gevrey well-posedness for certain exponents~$s$.
Following~\cite{JannelliTaglialatela-2014},
we consider a condition of~the form:
\begin{equation} \label{E-checkw}
\bigl|b'(t;\xi)\bigr|
  \lesssim   \Lambda(t;\xi) \, \delta^\alpha(t;\xi)  \, .
\end{equation}
for some function $\Lambda(t;\xi)$ verifying~\eqref{E-Lambda}
and some $\alpha \in \left]0,\frac{1}{2}\right[$.
Condition~\eqref{E-checkw} is clearly weaker than~\eqref{E-check}
which can be seen, in some sense, as the limit for $\alpha\nearrow\frac{1}{2}$ of~\eqref{E-checkw}.

\goodbreak

\subsection*{Levi conditions}

Returning to the G\aa rding condition~\eqref{E-Garding},
we show in the Appendix~\ref{A-1} that for a biwave operator as in~\eqref{E-biwave}
this condition can be explicitly stated as follows:
\begin{alignat}{2}
\bigl[c_{2,1}(\xi)\bigr]^2
&  \lesssim  b(\xi)  &  \qquad
&  \bigl[b(\xi) \, c_{2,1}(\xi) + c_{0,3}(\xi)\bigr]^2
  \lesssim  b(\xi) \, \delta(\xi) \label{E-Levicc1}  \\
\bigl[c_{0,3}(\xi)\bigr]^2
&  \lesssim  a(\xi) \, b(\xi)  &
&  \bigl[b(\xi) \, c_{3,0}(\xi) + c_{1,2}(\xi)\bigr]^2
  \lesssim  \delta(\xi) \label{E-Levicc2}  \\
\bigl|c_{0,2}(\xi)\bigr|
&  \lesssim  b(\xi)  &
&  \bigl[c_{1,1}(\xi)\bigr]^2
  \lesssim  b(\xi)  \qquad
\bigl[c_{0,1}(\xi)\bigr]^2
  \lesssim  b(\xi) \, , \label{E-Levicc3}
\end{alignat}
where
\begin{equation} \label{eq:defn.cjl}
c_{j,\ell}(\xi)
  \defeq  \sum_{|\nu|=\ell} c_{j,\nu} \, \bigl(\xi/|\xi|\bigr)^\nu \, .
\end{equation}

The G\aa rding condition~\eqref{E-Garding} remains necessary and sufficient
for $\mathscr{C}^\infty$ well-posedness even if the lower-order terms have variable coefficients~\cite{Dunn-1975,Wakabayashi-1980}.
Additionally, condition~\eqref{E-Garding} is sufficient
when the principal symbol depends on a single spatial variable
and satisfies the analogue of~condition~\eqref{E-CO}
for operators with space-dependent coefficients~\cite{SpagnoloTaglialatela-2022}.

In this paper, we relax conditions~\eqref{E-Levicc1}--\eqref{E-Levicc3}
by raising the right-hand side to a power $\alpha < 1$,
thereby establishing sufficient conditions for well-posedness in Gevrey classes.

\bigskip
\goodbreak

We can now finally state our main result.

\begin{Theorem} \label{T-Gevrey-4}
Consider the Cauchy problem for the biwave equation~\eqref{E-eq},
with initial conditions~\eqref{E-IC},
where $\PP(t;\tau,\xi)$ and~$\RR(t;\tau,\xi)$ are defined
in~\eqref{E-biwave} and~\eqref{E-RR} respectively.

Let $\kappa\ge1$,
and assume that
there exists a function $\Lambda(t;\xi)$ verifying~\eqref{E-Lambda}
such that the following conditions hold true.

Regularity hypotheses:
\begin{align}
a_\nu \, , \, b_\nu
&  \in  AC\bigl(\zeroT\bigr) \, , \tag{R$_1$} \label{E-HGR}  \\
\bigl|a'(t;\xi)\bigr|
&  \lesssim  \Lambda(t;\xi) \, a^{1-\frac{1}{\kappa}}(t;\xi) \, ,
   \tag{R$_2$} \label{E-HGa}  \\
\bigl|\delta'(t;\xi)\bigr|
&  \lesssim  \Lambda(t;\xi) \, \delta^{1-\frac{1}{\kappa}}(t;\xi) \, .
   \tag{R$_3$} \label{E-HGdelta}
\intertext{\endgraf
Hypothesis on the principal part:}
\bigl|b'(t;\xi)\bigr|
&  \lesssim  \Lambda(t;\xi) \,
             \bigl[\delta(t;\xi)\bigr]^{\frac{1}{2}-\frac{1}{\kappa}}  &
&  \text{if $\kappa \ge 2$} \, .
   \tag{$\psi$} \label{E-HGB}
\intertext{\endgraf
Levi conditions:}
c_{k,\nu} \in L^1\bigl(\zeroT\bigr)
&  \quad  k+|\nu| \le 3
   \tag{$\LG{0}$} \label{E-HGL reg}  \\
\bigl|b(t;\xi) \, c_{2,1}(t;\xi) + c_{0,3}(t;\xi)\bigr|
&  \le  \Lambda(t;\xi) \, \bigl[b(t;\xi) \, \delta(t;\xi)\bigr]^{\frac{1}{2}-\frac{2}{3\kappa}}  &
&  \text{if $\kappa \ge \frac{4}{3}$} \, ,
   \tag{$\LG{1}$} \label{E-HGL c21c03}  \\
\bigl|c_{0,3}(t;\xi)\bigr|
&  \le  \Lambda(t;\xi) \, \bigl[a(t;\xi) \, b(t;\xi)\bigr]^{\frac{1}{2}-\frac{2}{3\kappa}}  &
&  \text{if $\kappa \ge \frac{4}{3}$} \, ,
   \tag{$\LG{2}$} \label{E-HGL c03}  \\
\bigl|b(t;\xi) \, c_{3,0}(t;\xi) + c_{1,2}(t;\xi)\bigr|
&  \le  \Lambda(t;\xi) \, \bigl[\delta(t;\xi)\bigr]^{\frac{1}{2}-\frac{1}{k}}  &
&  \text{if $\kappa \ge 2$} \, ,
   \tag{$\LG{3}$} \label{E-HGL c03c12}  \\
\bigl|c_{2,1}(t;\xi)\bigr|
&  \le  \Lambda(t;\xi) \, \bigl[b(t;\xi)\bigr]^{\frac{1}{2}-\frac{2}{\kappa}}  &
&  \text{if $\kappa \ge 4$} \, ,
   \tag{$\LG{4}$} \label{E-HGL c21}  \\
\bigl|c_{0,2}(t;\xi)\bigr|
&  \le  \Lambda(t;\xi) \, \bigl[b(t;\xi)\bigr]^{1-\frac{4}{\kappa}}  &
&  \text{if $\kappa \ge 4$} \, ,
   \tag{$\LG{5}$} \label{E-HGL c02}  \\
\bigl|c_{1,1}(t;\xi)\bigr|
&  \le  \Lambda(t;\xi) \, \bigl[b(t;\xi)\bigr]^{\frac{1}{2}-\frac{4}{\kappa}}  &
&  \text{if $\kappa \ge 8$} \, ,
   \tag{$\LG{6}$} \label{E-HGL c11}  \\
\bigl|c_{0,1}(t;\xi)\bigr|
&  \le  \Lambda(t;\xi) \, \bigl[b(t;\xi)\bigr]^{\frac{1}{2}-\frac{6}{\kappa}}  &
&  \text{if $\kappa \ge 12$} \, ,
   \tag{$\LG{7}$} \label{E-HGL c01}
\end{align}
where the~$c_{j,\ell}(t;\xi)$ are defined by
\begin{equation} \label{E-cjl}
c_{j,\ell}(t;\xi)
  \defeq  \sum_{|\nu|=\ell} c_{j,\nu}(t) \, \bigl(\xi/|\xi|\bigr)^\nu \, .
\end{equation}

Then the Cauchy problem~\eqref{E-eq}--\,\eqref{E-IC} is well-posed in~$\gamma^s$,
for $1<s<1+\frac{\kappa}{4}$.
\end{Theorem}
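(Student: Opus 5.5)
Since the coefficients depend only on $t$, we take the Fourier transform in $x$ and reduce \eqref{E-eq} to a family of ODEs in $t$ with parameter $\xi$. Writing $\hat u(t,\xi)$, we set up the energy vector
\[
V(t;\xi) = \bigl(|\xi|^3\hat u,\ |\xi|^2\partial_t\hat u,\ |\xi|\partial_t^2\hat u,\ \partial_t^3\hat u\bigr)^{\mathsf T},
\]
which satisfies a first-order system
\[
V' = i|\xi|\,A(t;\xi)V + B(t;\xi)V .
\]
Here $A$ is the $4\times4$ Sylvester matrix of $\tau^4-2b\tau^2+a$, and $B$ collects the lower order terms, with entries $c_{j,\ell}(t;\xi)\,|\xi|^{j+\ell-3}$ placed in the last row. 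Gevrey well-posedness for $s<1+\kappa/4$ will follow, in the standard way, from an a priori estimate
\[
|V(t;\xi)|\le C\,e^{C|\xi|^{1/\sigma}}|V(0;\xi)|,\qquad \frac1\sigma=\frac{4}{4+\kappa},
\]
i.e.\ $\sigma=1+\kappa/4$, combined with Paley--Wiener characterizations of $\gamma^s$ and finite propagation speed (as in~\cite{CJS}).

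To obtain the estimate we use a regularized quasi-symmetrizer. Define $a_\varepsilon=a+\varepsilon^{4}$ and $\delta_\varepsilon=\delta+\varepsilon^{4}$, the latter possibly with $b$ adjusted so that hyperbolicity persists. Let $\mathscr Q_\varepsilon$ be the Hermitian symmetrizer associated with the Sylvester matrix $\mathbf A_\varepsilon$ of the perturbed, now strictly hyperbolic, polynomial $\tau^4-2b_\varepsilon\tau^2+a_\varepsilon$. Concretely, $\mathscr Q_\varepsilon=W_\varepsilon^*W_\varepsilon$, where $W_\varepsilon$ is built from the Vandermonde matrix of the roots, or equivalently the Bezoutian form, which for a biquadratic is expressible through $a_\varepsilon$, $b_\varepsilon$, $\delta_\varepsilon$. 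The energy is
\[
E_\varepsilon(t;\xi)=\langle \mathscr Q_\varepsilon V,V\rangle ,
\]
with $\varepsilon=|\xi|^{-1/\sigma'}$ for a suitable choice of $\sigma'$. Differentiating gives four contributions:
\begin{enumerate}
\item $\langle \mathscr Q_\varepsilon' V,V\rangle$;
\item $2|\xi|\,\Im\langle \mathscr Q_\varepsilon(\mathbf A-\mathbf A_\varepsilon)V,V\rangle$, since $\mathscr Q_\varepsilon\mathbf A_\varepsilon$ is Hermitian;
\item $2\Re\langle \mathscr Q_\varepsilon BV,V\rangle$.
\end{enumerate}
These properties of $\mathscr Q_\varepsilon$ are what Section~4 will establish.

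The key bounds are as follows.
\begin{itemize}
\item A lower bound $E_\varepsilon\gtrsim \varepsilon^{c}|V|^2$ for some fixed power $c$. This costs only a polynomial factor in $|\xi|$, which is harmless in Gevrey classes.
\item $|\langle \mathscr Q_\varepsilon'V,V\rangle|\lesssim \Lambda\,\varepsilon^{-1}E_\varepsilon$. This uses \eqref{E-HGa}, \eqref{E-HGdelta} and \eqref{E-HGB}. The exponents $1-1/\kappa$ are exactly what is needed: for example, $a'/a_\varepsilon\lesssim \Lambda a^{1-1/\kappa}/(a+\varepsilon^4)\lesssim \Lambda\varepsilon^{-4/\kappa}$.
\item The perturbation term is bounded by $|\xi|\varepsilon^{4}\cdot(\text{weights})\,E_\varepsilon$.
\item The Levi conditions \eqref{E-HGL c21c03}--\eqref{E-HGL c01} show that each entry of $B$, once measured in the $\mathscr Q_\varepsilon$-norm, is $\lesssim\Lambda\,|\xi|^{j+\ell-3}\varepsilon^{-(\cdot)}$. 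The exponents $\tfrac12-\tfrac{2}{3\kappa}$, $\tfrac12-\tfrac4\kappa$, etc., are tuned so that every term ends up $\lesssim \Lambda |\xi|^{4/(4+\kappa)}$.
\end{itemize}
Balancing $|\xi|\varepsilon^{4}$ against $\varepsilon^{-4/\kappa}$ gives $\varepsilon=|\xi|^{-\kappa/(4(\kappa+1))}$ up to the precise normalization, hence the rate $|\xi|^{1/\sigma}$. Gronwall then yields $E_\varepsilon(t)\le E_\varepsilon(0)\exp\bigl(C|\xi|^{1/\sigma}\int\Lambda\bigr)$, and \eqref{E-Lambda} makes the integral bounded uniformly in $\xi$.

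I expect the main obstacle to be the construction of $\mathscr Q_\varepsilon$ and the proof of the derivative bound for it. The weights must be chosen so that one simultaneously has (a) $\mathscr Q_\varepsilon'$ controlled using only $a$, $\delta$ and the weak condition on $b'$, and not $b$ itself (following Colombini--Kinoshita); and (b) the quadratic forms $\langle\mathscr Q_\varepsilon BV,V\rangle$ decomposing along the combinations $bc_{2,1}+c_{0,3}$ and $bc_{3,0}+c_{1,2}$ that appear in the Levi conditions. My first attempt would be to diagonalize the biquadratic via $\tau^2$: its roots are $\tau_\pm^2=b\pm\sqrt\delta$. This factors the symmetrizer into the blocks associated with $\tau^2-\tau_\pm^2$. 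I would then verify (a) using \eqref{E-HGB} to control $(\sqrt{b\pm\sqrt\delta})'$, with a case split according to whether $\delta\lesssim\varepsilon^4$ or not.
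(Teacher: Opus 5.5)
Your overall architecture matches the paper's. In fact the Bezoutian symmetrizer $W_\e^*W_\e$ you describe is, up to a constant factor and a reordering of the components of $V$, the paper's block matrix $\QQe=\mathrm{diag}(\Qtwo_\e\Atwo_\e,\Qtwo_\e)$. In that block form $\langle\QQe V,V\rangle$ is the explicit sum of squares \eqref{E-Q-Ee2}, so the bound on $\QQe'$ only needs $|\aae'|/\aae$, $|\deltae'|/\deltae$, $|\bbe'|/\bbe$ and $|\bbe'|/\sqrt{\deltae}$; no root-by-root case analysis is required.

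\textbf{Main gap: the quantitative core.} Two of your bounds are wrong, and the conclusion does not follow from them.
\begin{itemize}
\item Your bound $|\langle\QQe'V,V\rangle|\lesssim\Lambda\e^{-1}E_\e$ is false for $\kappa<4$. The correct bound is $\e^{-4/\kappa}\Lambda E_\e$, as your own example for $a'/a_\e$ shows.
\item The antisymmetric term is not of size $|\xi|\e^4$. Near a quadruple root, raising $a$ and $\delta$ by $\e^4$ moves the characteristic roots by $O(\e)$. What must be proved is
\[
\bigl|\langle(\QQe\AA-(\QQe\AA)^*)V,V\rangle\bigr|\lesssim\e\,\langle\QQe V,V\rangle .
\]
\end{itemize}
With your stated bounds the arithmetic does not give the theorem. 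Your choice $\e=|\xi|^{-\kappa/(4(\kappa+1))}$ yields $\e^{-4/\kappa}=|\xi|^{1/(\kappa+1)}$, not $|\xi|^{4/(\kappa+4)}$. The phrase ``up to the precise normalization'' hides exactly the step that produces the $4$ in $1+\kappa/4$. The correct balance is $\e|\xi|=\e^{-4/\kappa}$, i.e.\ $\e=|\xi|^{-\kappa/(\kappa+4)}$.

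The $O(\e)$ estimate is the heart of the quasi-symmetrizer method, and your plan has no argument for it. The paper obtains it in three steps:
\begin{itemize}
\item the block structure: $\QQe\AA-(\QQe\AA)^*$ has off-diagonal blocks $\Rtwoe$ and $-\Rtwoe^*$, with $\Rtwoe=\Qtwo_\e(\Atwo-\Atwo_\e)$;
\item Cauchy--Schwarz in the $\Qtwo_\e$ inner product;
\item the dual-norm computation $\langle(\Qtwo_\e\Atwo_\e)^{-1}Z,Z\rangle\lesssim\e^2$ for $Z=(\aae-a,-2(\bbe-b))$.
\end{itemize}

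\textbf{Secondary gap: the regularization.} Your choice $a_\e=a+\e^4$, $\delta_\e=\delta+\e^4$ (which forces $b_\e=\sqrt{b^2+2\e^4}$) is fine for the homogeneous part but fails for the lower-order terms when $\kappa$ is large.
\begin{itemize}
\item It separates a double root only by $O(\e^2)$, for instance where $b\sim1$ and $a=0$ or $\delta=0$. There $\bbe/\aae$ or $\bbe/\deltae$ is of order $\e^{-4}$ instead of $\e^{-2}$, and $E_\e$ degenerates like $\e^4|V|^2$.
\item Consequently the component $|\xi|\,\partial_t^2\hat u$ is controlled only by $\e^{-2}\sqrt{E_\e}$ where $\delta=0$.
\item The term $c_{2,0}\abs{\xi}^{-1}\cdot|\xi|\partial_t^2\hat u$, on which only $c_{2,0}\in L^1$ is assumed, then costs $\e^{-2}\abs{\xi}^{-1}=\abs{\xi}^{(\kappa-4)/(\kappa+4)}$. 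This exceeds the admissible rate $\abs{\xi}^{4/(\kappa+4)}$ when $\kappa>8$.
\item The $c_{0,2}$ term behaves the same way where $a=0$.
\end{itemize}
The paper's choice $\bbe=\e^2+\sqrt{b^2+\e^4}$, $\aae=a+\bbe\e^2$, $\deltae=\delta+\bbe\e^2$ is made precisely so that \eqref{E-all2} holds, i.e.\ every coalescing pair of roots is separated by $\gtrsim\e$. Those bounds are exactly what the Levi-term estimates in Proposition~\ref{P-Levi} rely on.
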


\begin{Remark}
Note that if $b^2$ and $a$ are $\mathcal{C}^\kappa$ functions,
then~\eqref{E-HGa} and~\eqref{E-HGdelta} are verified.
We consider here the case $\kappa\ge1$:
our result is consistent with \cite{ColombiniKinoshita-2002}
where they consider $\kappa\le1$.
\end{Remark}

We remark that the Levi conditions appear gradually as $\kappa$ increase
(cf.~Remark~4 in~\cite{Kinoshita-1998a}).
Phenomena of~this kind are usual in~this setting
and are reasonable, as the following example shows.

\begin{Example}
Consider the operator with principal symbol $\PP(t;\tau,\xi)=\tau^4$.

This is an operator with constant coefficients principal part
and a unique characteristic root $\tau=0$ with multiplicity~$4$.
According to~\eqref{E-Larsson},
the largest Gevrey class of~well-posedness can be obtained
thanks to the construction of~a Newton polygon:
if $c_{j,k}\not\equiv0$, draw the point $(k,4-j)$ in~the $(x,y)$ plane,
and consider the convex hull of~these points together with the origin
(which corresponds to $c_{4,0} \equiv 1$).
The maximal Gevrey index of~well-posedness
is given by the slope of~the steepest line through the origin
below all points of~the Newton polygon.

If, for instance,
\begin{equation} \label{E-PR}
\PP(t;\tau,\xi) + \RR(t;\tau,\xi)
  =  \tau^4 - c_{1,2}(t) \, \tau \xi^2 - c_{0,1}(t) \xi \, ,
\end{equation}
the Newton polygon is given in Fig.~\ref{F-1}.

\begin{figure}[h]
\[
\includegraphics[width=0.8\linewidth]{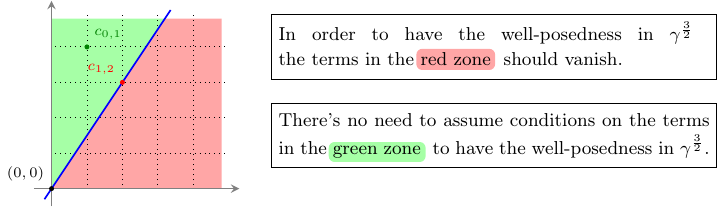}
\]
\caption{Newton polygon for the polynomial in~\eqref{E-PR}}  \label{F-1}
\end{figure}

If, for instance, $c_{0,3}\equiv0$ and $c_{1,2}\not\equiv0$,
then the well-posedness is limited to $\gamma^s$ with $1<s<\frac{3}{2}$,
and the conditions on the other terms are superfluous.
\end{Example}

We remark that the multiplicity of~the characteristic roots is at most double
if, and only if,
\begin{equation} \label{E-b gg 0}
b(t;\xi) \ge \beta_0 \, ,
\quad
\text{for any $(t;\xi) \in \zeroT \times \R^n$} \, ,
\end{equation}
for some $\beta_0>0$.

\smallskip
\goodbreak

In this case Theorem~\ref{T-Gevrey-4} can be improved as follows

\begin{Theorem} \label{T-Gevrey-2}
Consider the Cauchy problem for the biwave equation~\eqref{E-eq},
with initial conditions~\eqref{E-IC},
where $\PP$ and~$\RR$ are defined as in~\eqref{E-biwave} and~\eqref{E-RR}
rispectively.

Let $\kappa\ge1$,
and assume the same Hypothesis~\eqref{E-HGR},~\eqref{E-HGa},~\eqref{E-HGdelta}
and~\eqref{E-HGB} of~Theorem~\ref{T-Gevrey-4}.

Assume moreover that~\eqref{E-b gg 0} is satisfied
and that there exists a function $\Lambda(t;\xi)$ verifying~\eqref{E-Lambda}
such that the following Levi conditions hold true:
\begin{align}
\bigl|b(t;\xi) \, c_{2,1}(t;\xi) + c_{0,3}(t;\xi)\bigr|
&  \le  \Lambda(t;\xi) \, \bigl[\delta(t;\xi)\bigr]^{\frac{1}{2}-\frac{1}{k}}  &
&  \text{if $\kappa \ge 2$} \, ,
   \tag{L$'_1$} \label{E-HGL2 c21c03}  \\
\bigl|c_{0,3}(t;\xi)\bigr|
&  \le  \Lambda(t;\xi) \, \bigl[a(t;\xi)\bigr]^{\frac{1}{2}-\frac{1}{k}}  &
&  \text{if $\kappa \ge 2$} \, ,
   \tag{L$'_2$} \label{E-HGL2 c03}  \\
\bigl|b(t;\xi) \, c_{3,0}(t;\xi) + c_{1,2}(t;\xi)\bigr|
&  \le  \Lambda(t;\xi) \, \bigl[\delta(t;\xi)\bigr]^{\frac{1}{2}-\frac{1}{k}}  &
&  \text{if $\kappa \ge 2$} \, .
   \tag{L$'_3$} \label{E-HGL2 c03c12}
\end{align}
where $\Lambda=\Lambda(t;\xi)$ is a nonnegative measurable function verifying~\eqref{E-Lambda}.

Then the Cauchy problem~\eqref{E-eq}-\eqref{E-IC} is well-posed in~$\gamma^s$,
for $1<s<1+\frac{\kappa}{2}$.
\end{Theorem}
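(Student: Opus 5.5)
We follow the scheme we expect for Theorem~\ref{T-Gevrey-4}: Fourier transform in~$x$, reduction to a first order $4\times4$ system, and an energy built from a quasi-symmetrizer. The point where \eqref{E-b gg 0} enters is that the characteristic roots split into two well separated pairs, $\pm\tau_+$ and $\pm\tau_-$, with
\[
\tau_\pm^2 = |\xi|^2\bigl(b\pm\sqrt{\delta}\,\bigr) , \qquad
\tau_+^2-\tau_-^2 = 2|\xi|^2\sqrt{\delta} .
\]
Since $b\ge\beta_0$, the roots $\tau_+$ and $-\tau_+$ stay apart, and $\tau_+$ stays away from $\pm\tau_-$. Only $\tau_-$ and $-\tau_-$ can coalesce, when $a=0$; and $\tau_+,\tau_-$ can coalesce, when $\delta=0$. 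Both coalescences are at most double. This should explain the Ohya--Tarama bound $1+\kappa/r$ with $r=2$.

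The first step is to set $v(t,\xi)=\hat u(t,\xi)$ and
\[
V=\bigl(|\xi|^3 v,\ |\xi|^2 v',\ |\xi| v'',\ v'''\bigr)^{\mathsf T} .
\]
Then $V'=i|\xi|A(t;\xi)V+B(t;\xi)V$, where $A$ is the Sylvester matrix of $\tau^4-2b\tau^2+a$ and $B$ collects the lower order terms. We then decouple the two pairs up to a bounded error. Using the factorization
\[
\tau^4-2b\tau^2+a=(\tau^2-\tau_+^2/|\xi|^2)(\tau^2-\tau_-^2/|\xi|^2)
\]
at the level of energies, or equivalently a change of unknowns whose coefficients involve only $b$, $\sqrt\delta$ and their combinations, the system becomes block-triangular with two $2\times2$ wave-type blocks. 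The conjugating matrix has entries with derivatives controlled by \eqref{E-HGR}, \eqref{E-HGdelta} and \eqref{E-HGB}. Its invertibility would degenerate where $\delta=0$, so we keep it only as a quasi-diagonalization and work with the quasi-symmetrizer of~\cite{JannelliTaglialatela-2014} restricted to each double root.

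The second step is to build the energy. For a parameter $\e>0$, to be chosen later as $\e=|\xi|^{-1/s'}$ with $s<s'<1+\kappa/2$, we take the quasi-symmetrizer $\QQe=Q_0+\e^2Q_1$ (no higher powers of $\e$ are needed since multiplicities are $\le2$). It satisfies
\[
\e^2|V|^2\lesssim\langle\QQe V,V\rangle\lesssim|V|^2 ,
\qquad
|\Im\langle\QQe A V,V\rangle|\lesssim\e\langle\QQe V,V\rangle .
\]
We regularize $a$ and $\delta$ by setting $a_\e=a+\e^2$ and $\delta_\e=\delta+\e^2$, with an energy weight of the form $\delta_\e^{1/2}$, $a_\e^{1/2}$. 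Computing $E'$ produces three contributions:
\begin{itemize}
\item the term $\e|\xi|E$, which is the Gevrey loss;
\item the term $\frac{|\QQe'|}{\langle \QQe\rangle}E$, which hypotheses \eqref{E-HGa}, \eqref{E-HGdelta} and \eqref{E-HGB} bound by $\Lambda\,\e^{-2/\kappa}$ after the Colombini--Jannelli--Spagnolo splitting;
\item the lower order contributions.
\end{itemize}
For the lower order terms, the Levi conditions (L$'_1$)--(L$'_3$) are exactly what is needed. The only dangerous terms are those hitting the degenerate directions $\delta\to0$ and $a\to0$. Bounded terms such as $c_{2,1},c_{0,2},\dots$ give only $O(1)$ because $b\ge\beta_0$, which is why \eqref{E-HGL c21}--\eqref{E-HGL c01} disappear. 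For the dangerous terms, (L$'_1$)--(L$'_3$) bound the contribution by $\Lambda\,\delta_\e^{-1/\kappa}$ or $\Lambda\,a_\e^{-1/\kappa}$, hence by $\Lambda\,\e^{-2/\kappa}$.

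The last step is to integrate. Gronwall gives
\[
E(t)\lesssim\exp\!\Bigl(C\bigl(\e|\xi|+\e^{-2/\kappa}\bigr)\Bigr)E(0) .
\]
Balancing the two exponents with $\e=|\xi|^{-\kappa/(\kappa+2)}$ gives growth $\exp\bigl(C|\xi|^{2/(\kappa+2)}\bigr)$. This is compensated by Gevrey data precisely when $1/s>2/(\kappa+2)$, that is $s<1+\kappa/2$. Existence, uniqueness and the uniformity in $t_0$ then follow by the standard Paley--Wiener argument.

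The main obstacle I expect is the second step: constructing a single quasi-symmetrizer that handles both degeneracies, $\delta=0$ and $a=0$, which may occur simultaneously with different rates. One must also check that the $\e$-terms coupling the two blocks do not cost more than $\e^{-2/\kappa}$. I would first try a block-diagonal $\QQe$ in the basis adapted to the pairs $\{\pm\tau_-\}$ and $\{\tau_-,\tau_+\}$, and fall back on the explicit $4\times4$ computation of Section~4 if the off-diagonal terms misbehave.
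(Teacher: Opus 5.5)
\textbf{There is a genuine gap.} Your outer frame matches the paper's: Fourier transform, the energy $\<\QQe V,V\>$, Gr\"onwall, the choice $\e=|\xi|^{-\kappa/(\kappa+2)}$, and Paley--Wiener. But the whole content of the theorem lies in the step you leave open. You need a quasi-symmetrizer with $\e^2|V|^2\lesssim\<\QQe V,V\>\lesssim|V|^2$, $|\<(\QQe\AA-(\QQe\AA)^*)V,V\>|\lesssim\e\<\QQe V,V\>$ and $|\<\QQe'V,V\>|\lesssim\e^{-2/\kappa}\Lambda\<\QQe V,V\>$. You also need a bound $|\<\QQe\CC V,V\>|\lesssim[\e^{-2/\kappa}+(\e|\xi|)^{-1}]\Lambda\<\QQe V,V\>$ derived from \eqref{E-HGL2 c21c03}--\eqref{E-HGL2 c03c12}. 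You postulate these (the form $Q_0+\e^2Q_1$ and the $\e^2$ lower bound are only asserted), and the route you suggest for building them does not work as stated.

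The coalescences form a chain: $\tau_+=\tau_-$ and $-\tau_+=-\tau_-$ where $\delta=0$, and $\tau_-=-\tau_-$ where $a=0$. So there is no global splitting of the roots into separated clusters; your two ``pairs'' $\{\pm\tau_-\}$ and $\{\tau_-,\tau_+\}$ even share $\tau_-$. Any decoupling matrix built from $\sqrt\delta$ or $\tau_\pm$ therefore degenerates somewhere. Its derivative involves quantities like $\delta'/\sqrt\delta$ or, near $a=0$, $a'/\sqrt a$. Hypotheses \eqref{E-HGa}, \eqref{E-HGdelta} only bound these by $\Lambda\delta^{\frac12-\frac1\kappa}$ and $\Lambda a^{\frac12-\frac1\kappa}$, which are unbounded for $\kappa<2$, exactly the range where neither \eqref{E-HGB} nor any Levi condition is assumed. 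An operator factorization through $\mu_\pm=b\pm\sqrt\delta$ would also produce $\mu_\pm''$, which need not exist for $AC$ coefficients. Two smaller points: ``$\tau_+$ stays away from $\pm\tau_-$'' contradicts your next sentence, and the case you fear most, $\delta$ and $a$ degenerating together, cannot occur, since $a+\delta=b^2\ge\beta_0^2$.

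\textbf{The missing idea is the paper's block structure.} Order the unknowns as $V=\bigl((i|\xi|)^3v,(i|\xi|)v'',(i|\xi|)^2v',v'''\bigr)$. Then $\AA=\bigl(\begin{smallmatrix}\Ztwo&\Itwo\\ \Atwo&\Ztwo\end{smallmatrix}\bigr)$, where $\Atwo=\bigl(\begin{smallmatrix}0&1\\ -a&2b\end{smallmatrix}\bigr)$ is the Sylvester matrix of $\mu^2-2b\mu+a$ with $\mu=\tau^2$. The problem becomes a second order $2\times2$ system, and only $\Atwo$ has to be quasi-symmetrized; its eigenvalues $b\pm\sqrt\delta$ are nonnegative.

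Replace $\Atwo$ by $\Atwo_\e$ built from $\bbe=\e^2+\sqrt{b^2+\e^4}$, $\aae=a+\e^2\bbe$, $\deltae=\delta+\e^2\bbe$. Then $\bbe^2-\aae=\deltae$ holds exactly and $\Qtwo_\e\Atwo_\e$ is symmetric positive definite. Take $\QQe=\mathrm{diag}(\Qtwo_\e\Atwo_\e,\Qtwo_\e)$. This gives three things:
\begin{itemize}
\item $\QQe\AA-(\QQe\AA)^*$ contains only the block $\Rtwoe=\Qtwo_\e(\Atwo-\Atwo_\e)$. It is $O(\e)$ because $|(\Atwo_\e-\Atwo)W|\lesssim\e\<\Qtwo_\e\Atwo_\e W,W\>^{1/2}$.
\item The explicit sum-of-squares forms of $\<\QQe V,V\>$ give $|V_1|^2\le\frac{\bbe}{\aae\deltae}\<\QQe V,V\>$, $|V_2|^2\le\frac{\bbe}{\deltae}\<\QQe V,V\>$ and $|V_3|^2\le\frac{1}{\deltae}\<\QQe V,V\>$. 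These are what the Levi conditions are tested against.
\item $\<\QQe'V,V\>$ is bounded by $|\aae'|/\aae+|\deltae'|/\deltae+|\bbe'|/\bbe+|\bbe'|/\sqrt{\deltae}$, times $\<\QQe V,V\>$. This is exactly where \eqref{E-HGa}, \eqref{E-HGdelta}, \eqref{E-HGB} enter.
\end{itemize}

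Under \eqref{E-b gg 0} one has $\aae,\deltae\gtrsim\e^2$. The identity $\aae+\deltae=\bbe^2$ then gives $\frac{\bbe}{\aae\deltae}=\frac{1}{\bbe}\bigl(\frac{1}{\aae}+\frac{1}{\deltae}\bigr)\lesssim\e^{-2}$. This identity, not a decoupling of the roots, is what yields the $\e^2$ lower bound and the $\e^{-2/\kappa}$ rates. Your regularization $a+\e^2$, $\delta+\e^2$ has the right size, but it only becomes usable once it is tied to a $b_\e$ with $b_\e^2-a_\e=\delta_\e$ inside this block structure.
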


\begin{Remark}
In Theorem~\ref{T-Gevrey-2} the Levi conditions involves only the coefficients of~the terms of~order~3.

This is consistent with the well-known fact (maybe observed by Peyser \cite{Peyser-1969} for the first time)
that the well-posedness of~a differential operator of~order $m$ with characteristic roots of~multiplicity $r$
is stable under perturbation by operators of~order $\le m-r$.
\end{Remark}

\section{Proofs of \Cref{T-Gevrey-4} and \Cref{T-Gevrey-2}}

Our proof follows the~strategy of~\cite{CDGS} and \cite{CJS}
and relies on the method of~approximated energies
and the quasi-symmetrizer~\cite{Jannelli-1989}, \cite{DAnconaSpagnolo-1998}
(see also~\cite{Jannelli-2008}).

\smallskip

First of~all we remark that,
thanks to the Duhamel principle, we can assume that $f(t,x)$ in~\eqref{E-eq} vanishes identically.
Secondly,
we can assume that the initial data~\eqref{E-IC} have compact support;
the result for non compact initial data can be obtained
using the finite speed of~propagation property
(cf.~\cite{CDGS} \cite{Rauch-2005} \cite{ColombiniRauch-2011}).

\subsection*{Proof of~Theorem~\ref{T-Gevrey-4} for homogeneous equations}

We initially consider the case in which the equation is homogeneous,
that is $\RR(t;\tau,\xi) \equiv 0$.

\smallskip

Let $v(t;\xi) \defeq \mathscr{F}_{x\to\xi}\bigl(u(t,x)\bigr)$
be the Fourier transform with respect to the space variables $x$ of~$u(t,x)$,
and $\wh{u}_j(\xi) \defeq \mathscr{F}_{x\to\xi}\bigl(u_j(x)\bigr)$
be the Fourier transform of~the initial data~\eqref{E-IC}.
Problem~\eqref{E-eq}--\eqref{E-IC} is transformed into
the ordinary differential Cauchy problem in~$\zeroT$
\begin{equation} \label{PdCTrasformato}
\begin{cases}
v'''' - 2 \, b(t;\xi) \abs{\xi}^2 v'' + a(t;\xi) \abs{\xi}^4 v  = 0  \\
\partial_t^j v(0,\xi)=\wh{u}_j(\xi)
\qquad
j=0,\dotsc,3 \, ,
\end{cases}
\end{equation}
parameterized by~$\xi\in\R^n$.

We recall  the Paley-Wiener Theorem in~Gevrey spaces
(cf.~\cite[pag.~517]{CDGS} and \cite[Thm.~1.6.1]{Rodino}).

\begin{Theorem*}
Let $\varphi \in \mathscr{C}^\infty_0$, with support contained in the ball $\bigl\{ \, x\in\R^n \, \bigm| \, |x|\le R \, \bigr\}$.
Then $\varphi\in\gamma^s$,
if and only if there exist $C,\eta>0$
such that
\begin{equation} \label{E-PVG}
\bigl|\widehat{\varphi}(\xi+i\eta)\bigr|
  \le  C \, \exp\bigl( -\eta \, \abs{\xi}^{1/s} + R\,|\eta| \bigr) \, ,
\quad
\text{for all $\xi\in\R^n$} \, .
\end{equation}
\end{Theorem*}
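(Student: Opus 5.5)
We read \eqref{E-PVG} in its usual form: there exist $C,\varepsilon>0$ such that $|\widehat\varphi(\zeta)|\le C\exp\bigl(-\varepsilon|\Re\zeta|^{1/s}+R|\Im\zeta|\bigr)$ for every $\zeta\in\C^n$. Here $\varepsilon$ is the positive constant in the statement, and the factor $R|\Im\zeta|$ records the support of $\varphi$. We prove the two implications separately, using only integration by parts and an elementary optimisation in the number of derivatives.

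\emph{Necessity.} Let $\varphi\in\gamma^s$ with $\operatorname{supp}\varphi\subset\{|x|\le R\}$. Taking $\KK$ a neighbourhood of the support gives $|\partial_x^\alpha\varphi(x)|\le C_0 h^{-|\alpha|}|\alpha|!^s$ for all $x$ and $\alpha$. Integration by parts in $\widehat\varphi(\zeta)=\int e^{-ix\cdot\zeta}\varphi(x)\,dx$ gives, with $|e^{-ix\cdot\zeta}|\le e^{R|\Im\zeta|}$ on the support,
\[
|\zeta^\alpha\widehat\varphi(\zeta)|\le \int\bigl|\partial^\alpha\varphi(x)\bigr|\,e^{R|\Im\zeta|}\,dx \le C_1 h^{-|\alpha|}|\alpha|!^s e^{R|\Im\zeta|}.
\]
For each $N$ choose the coordinate $j$ with $|\zeta_j|\ge |\Re\zeta|/\sqrt n$ and take $\alpha=Ne_j$. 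This yields
\[
|\widehat\varphi(\zeta)|\le C_1 e^{R|\Im\zeta|}\,\inf_N \frac{N!^s\, n^{N/2}}{(h|\Re\zeta|)^N}.
\]
By Stirling, $N!^s\le C^N N^{sN}e^{-sN}$. Choosing $N\approx (h'|\Re\zeta|)^{1/s}$, with $h'=h/(C^{1/s}\sqrt n)$ say, makes the infimum at most $C\exp\bigl(-\varepsilon|\Re\zeta|^{1/s}\bigr)$ for a suitable $\varepsilon$. This gives \eqref{E-PVG}.

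\emph{Sufficiency.} The bound \eqref{E-PVG}, restricted to $\eta=0$, gives $|\widehat\varphi(\xi)|\le C e^{-\varepsilon|\xi|^{1/s}}$. By Fourier inversion,
\[
|\partial^\alpha\varphi(x)|\le (2\pi)^{-n}\int|\xi|^{|\alpha|}e^{-\varepsilon|\xi|^{1/s}}\,d\xi
\le (2\pi)^{-n}\Bigl(\sup_{\xi}|\xi|^{|\alpha|}e^{-\frac{\varepsilon}{2}|\xi|^{1/s}}\Bigr)\int e^{-\frac{\varepsilon}{2}|\xi|^{1/s}}\,d\xi .
\]
The supremum is attained at $|\xi|=(2s|\alpha|/\varepsilon)^{s}$, where it equals $(2s|\alpha|/(\varepsilon e))^{s|\alpha|}$. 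By Stirling this is bounded by $C R_0^{-|\alpha|}|\alpha|!^s$. Since the remaining integral is finite, $\varphi\in\gamma^s$. The compact support of $\varphi$ is part of the hypothesis. Alternatively, the growth factor $e^{R|\Im\zeta|}$ recovers the support bound by the classical Paley--Wiener--Schwartz theorem.

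The main obstacle is the bookkeeping in the optimisation over $N$ in the necessity part. One must handle the passage from the multi-index bounds $|\zeta^\alpha\widehat\varphi|$ to a bound in $|\Re\zeta|$, including the $\sqrt n$ loss. One must also treat the range of small $|\Re\zeta|$, where the optimal $N$ would be less than $1$; there we simply take $N=0$, and the exponential factor $e^{-\varepsilon|\Re\zeta|^{1/s}}$ is then absorbed into the constant $C$.
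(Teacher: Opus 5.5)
Your argument is correct, and it is the standard proof of this Gevrey Paley--Wiener theorem. The paper does not prove the statement; it quotes it from \cite{CDGS} and \cite[Thm.~1.6.1]{Rodino}, where the same route is used: integration by parts with $|e^{-ix\cdot\zeta}|\le e^{R|\Im\zeta|}$ on the support and an optimisation in the number of derivatives for necessity, and Fourier inversion of the real-axis bound $|\widehat\varphi(\xi)|\le Ce^{-\varepsilon|\xi|^{1/s}}$ for sufficiency. Your reading of the statement is also the intended one: a decay constant $\varepsilon$ distinct from the imaginary part $\Im\zeta$, which ranges over all of $\R^n$ (the paper's formula uses the letter $\eta$ for both).
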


Thus, to prove that if $u_j \in \gamma^s$, for $j=0,1,2,3$,
then $u \in \mathscr{C}^3\bigl(\zeroT ; \gamma^s(\R^n)\bigr)$
it is enough to show that
\begin{sl}
there exists positive constants $K$, $r$, $\theta$ and $\sigma$, with $\sigma>s$,
such that
\begin{equation} \label{E-estV}
\sum_{j=0}^3 |\xi|^{3-j} \, \bigl|\partial_t^j v(t;\xi)\bigr|
  \le  K \, \abs{\xi}^r \, \exp\bigl( \theta \, \abs{\xi}^{1/\sigma}\bigr) \,
       \biggl[ \sum_{j=0}^3 |\xi|^{3-j} \, \bigl|v_j(0;\xi)\bigr| \biggr] \, ,
\end{equation}
for all $t\in\zeroT$ and $\xi\in\R^n$
sufficiently large.
\end{sl}

Let
\begin{equation}\label{eq:defn.V}
V
  \defeq  \begin{pmatrix}
      (i\abs{\xi})^3 v \\
      (i\abs{\xi}) v'' \\
      (i\abs{\xi})^2 v' \\
      v'''
      \end{pmatrix}
\qquad
V_0
  \defeq  \begin{pmatrix}
      (i\abs{\xi})^3 \wh{u}_0 \\
      (i\abs{\xi}) \wh{u}_2 \\
      (i\abs{\xi})^2 \wh{u}_1 \\
      \wh{u}_3
      \end{pmatrix}
\end{equation}
so that Problem~\eqref{PdCTrasformato} is transformed
into the first order linear system
\begin{equation} \label{E-SFhom}
\begin{cases}
V'(t;\xi) = i\abs{\xi} \AA(t;\xi) \, V(t;\xi)  \\
V(0,\xi)=V_0(\xi) \, ,
\end{cases}
\end{equation}
where
\begin{equation} \label{E-A}
\AA(t;\xi)
  \defeq{}
	\left(
	\begin{array}{@{}cc|cc@{}}
	0 & 0 & 1 & 0  \\
	0 & 0 & 0 & 1  \\\hline
	0 & 1 & 0 & 0  \\
	-a & 2b & 0 & 0
	\end{array}
	\right) \, ,
\end{equation}

Note that, contrary to custom,
the second and third components in $V$ and $V_0$ are swapped.
In this way, $\AA$ assumes a favorable block structure,
making the subsequent calculations more tractable.
Note also that the eigenvalues of~$\AA$ are the characteristic roots of~the symbol~$\PP$.

\smallskip

The main tool to obtain the apriori estimate \eqref{E-estV} is
the \emph{quasi-symmetrizer} of~the matrix $\AA$.
By quasi-symmetrizer of~an $N\times N$ matrix $\AA$
we mean a family of~coercive hermitian $N\times N$ matrices~$\{\QQe \}_{0< \e \le 1}$,
such that $\QQe \AA$ is ``almost symmetric''.

In the following Proposition we gather the properties of~the quasi-symmetrizer
that we will use in the following.

\begin{Proposition} \label{P-Q}
Given $\AA(t;\xi)$ as in~\eqref{E-A},
there exists a family $\{\QQe\}_{0<\e\le1}$ of~symmetric matrices
with the following properties.
\begin{align}
&  \makebox[0pt][l]{There exists positive constants $c_1,c_2$ such that} \label{E-est Qe 1}  \\
%	anche \label{E-lbQ}
		%
		& &
		c_1 \, \e^6 \, \|V\|_{\C^4}^2
		&
		\le  \<\QQe V,V\>_{\C^4}
		\le  c_2 \, \|V\|_{\C^4}^2 \, , &
		&
		\text{for any $V\in\C^4$} \, . \notag  \\
&  \makebox[0pt][l]{There exists a positive constant $c_3$ such that} \label{E-est Qe 3}  \\
		& &
		\bigl| \< \bigl( \QQe \AA - (\QQe \AA)^* \bigr) V,V\>_{\C^4} \bigr|
		&  \le  c_3 \, \e \<\QQe V,V\>_{\C^4} \, , &
		&
		\text{for any $V\in\C^4$} \, . \notag  \\
&  \makebox[0pt][l]{If \eqref{E-HGR}--\eqref{E-HGdelta} and~\eqref{E-HGB} hold true,
		then there exists $\Lambda = \Lambda(t;\xi)$ verifying~\eqref{E-Lambda}
		such that} \label{E-est Qep} &  \\
		& &
%%%	{E-Qep VV}
		\bigl| \<\QQe' V, V\>_{\C^4} \bigr|
		&  \le  \e^{-\frac{4}{\kappa}} \, \Lambda \, \<\QQe V, V\>_{\C^4} \, ,  &
		&
		\text{for any $V\in\C^4$} \, . \notag
\end{align}
\end{Proposition}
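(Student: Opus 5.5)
The plan is to build $\QQe$ by the standard construction of D'Ancona--Spagnolo and Jannelli~\cite{Jannelli-1989,DAnconaSpagnolo-1998}. The matrix $\AA$ in~\eqref{E-A} is, after undoing the swap of the second and third components of $V$, the Sylvester (companion) matrix of
\[
p(\lambda)=\lambda^4-2b\lambda^2+a ,
\]
whose roots are $\pm\sqrt{b\pm\sqrt{\delta}}$ and are real by~\eqref{E-WH}. Writing $\lambda_1,\dots,\lambda_4$ for these roots, set
\[
\QQe=\sum_{j=1}^4 \e^{2(j-1)}\,\mathcal{Q}^{(j)} .
\]
Here $\mathcal{Q}^{(1)}$ is the Bezoutian (hyperbolic) symmetrizer of $p$, and $\mathcal{Q}^{(j)}$ is built from the symmetrizers of the successive "reduced" polynomials $\prod_{k\ne i}(\lambda-\lambda_k)$, etc., conjugated by the permutation matrix. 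The entries of $\QQe$ are symmetric polynomials in the roots, so they are polynomials in $a,b$ alone; in particular $\QQe$ is symmetric with entries bounded on $\zeroT\times\S^{n-1}$. I would exploit the block structure: since $\AA^2=\mathrm{diag}(B,B)$ with $B=\begin{pmatrix}0&1\\-a&2b\end{pmatrix}$, the computations reduce largely to $2\times2$ blocks in the variable $\mu=\lambda^2$, whose roots are $b\pm\sqrt\delta$.

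Properties~\eqref{E-est Qe 1} and~\eqref{E-est Qe 3} are the general, purely algebraic properties of the quasi-symmetrizer of a hyperbolic companion matrix of order $N=4$.
\begin{itemize}
\item The upper bound in~\eqref{E-est Qe 1} follows from boundedness of $a$ and $b$.
\item The lower bound $c_1\e^{2(N-1)}=c_1\e^6$ comes from the last term $\e^6\mathcal{Q}^{(4)}$, which is the identity in the last coordinate, combined with the nested structure of the $\mathcal{Q}^{(j)}$.
\item For~\eqref{E-est Qe 3}, the matrices $\mathcal{Q}^{(j)}\AA$ are symmetric up to a remainder that is absorbed by the next term $\e^{2j}\mathcal{Q}^{(j+1)}$. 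This gives $\bigl|\<(\QQe\AA-(\QQe\AA)^*)V,V\>\bigr|\le c_3\e\<\QQe V,V\>$ via Cauchy--Schwarz.
\end{itemize}
I would quote these from~\cite{Jannelli-1989,DAnconaSpagnolo-1998}, checking only that the constants are uniform in $(t;\xi)$.

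The real work, and the step I expect to be the main obstacle, is~\eqref{E-est Qep}. I would first find a lower bound for $\<\QQe V,V\>$ as a sum of weighted squares of explicit linear forms in $V$, with weights of the type
\[
\delta+\e^4\delta^{\cdot}b^{\cdot},\qquad a+\e^2 b+\e^4,\qquad b+\e^2,\qquad 1,
\]
possibly multiplied by powers of $\e$. This amounts to the "diagonal dominance" of $\QQe$ in a basis adapted to the eigenvectors. Next I would differentiate the entries, which are polynomials in $a,b$, and express $\QQe'$ through $a'$, $b'$ and $\delta'$. Each resulting term is then estimated using:
\begin{itemize}
\item \eqref{E-HGa}: $|a'|\lesssim\Lambda a^{1-1/\kappa}$;
\item \eqref{E-HGdelta}: $|\delta'|\lesssim\Lambda\delta^{1-1/\kappa}$;
\item \eqref{E-HGB}: $|b'|\lesssim\Lambda\delta^{1/2-1/\kappa}$, and for $\kappa<2$ simply $|b'|\lesssim\Lambda$ by~\eqref{E-HGR}.
\end{itemize}
Young's inequality then converts these into the quasi-symmetrizer weights, for instance
\[
a^{1-\frac1\kappa}\le \e^{-\frac4\kappa}\bigl(a+\e^4\bigr),\qquad
\delta^{\frac12-\frac1\kappa}\,|X||Y|\le \e^{-\frac4\kappa}\bigl(\delta|X|^2+\e^4|Y|^2\bigr).
\]
The exponent $4/\kappa$ reflects that the relevant gaps between consecutive weights are $\e^4$ (a double gap of $\e^2$).

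The delicate point is the terms containing $b'$. Near points where $\delta=0$ but $b>0$, $b'$ multiplies cross terms, and the weights there come from $\delta$ rather than $b$; this is exactly why~\eqref{E-HGB} involves $\delta$. I would handle this by splitting $\<\QQe' V,V\>$ according to the $2\times2$ block structure and treating the $\delta$-block and the $a$-block separately. If a direct Young estimate fails for some cross term, I would first try regrouping it with a "good" square already present in $\mathcal{Q}^{(2)}$ or $\mathcal{Q}^{(3)}$. Summing all the contributions then yields~\eqref{E-est Qep} with a new $\Lambda$, a constant multiple of the old one, which still satisfies~\eqref{E-Lambda}.
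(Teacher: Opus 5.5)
Citing D'Ancona--Spagnolo for \eqref{E-est Qe 1} and \eqref{E-est Qe 3} is fine (the swap of $V_2$ and $V_3$ is harmless). The gap is \eqref{E-est Qep}, which is the real content of the proposition and which you leave as a programme. Nothing you cite supplies it:
the Kinoshita--Spagnolo derivative estimate for the D'Ancona--Spagnolo matrix needs near-diagonality, i.e.\ the Colombini--Orr\`u condition \eqref{E-CO}. That condition fails here whenever $\delta=0<b$, where the roots $\pm\sqrt{b}$ are double. Even where that estimate applies, it gives only $\e^{-2(m-1)/\kappa}=\e^{-6/\kappa}$; this is the origin of \eqref{E-JT2-bound s}.

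Your one concrete tool is also wrong. The sample inequality $\delta^{\frac12-\frac1\kappa}\abs{X}\abs{Y}\le\e^{-\frac4\kappa}(\delta\abs{X}^2+\e^4\abs{Y}^2)$ is false: for $\kappa=4$, $\delta=1$, $\abs{X}=\e^2$, $\abs{Y}=1$ the left side is $\e^2$ and the right side is $2\e^3$. In general, by AM--GM, pairing the weights $\delta$ and $\e^4$ costs $\delta^{-1/\kappa}\e^{-2}$, which is worse than $\e^{-4/\kappa}$ as soon as $\kappa>2$. A cross term $b'X\overline{Y}$ has to be absorbed by two squares whose weights differ by a factor $\deltae\approx\delta+\e^2b+\e^4$. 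In the paper these are $1$ and $\deltae$ for $(V_3,V_4)$, and $\aae/\bbe$ and $\aae\deltae/\bbe$ for $(V_1,V_2)$. This pairing is what lets \eqref{E-HGB} give $\abs{\bbe'}/\sqrt{\deltae}\lesssim\e^{-4/\kappa}\Lambda$.

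The missing idea is an explicit sum-of-squares form of $\<\QQe V,V\>$ in which every weight that gets differentiated is bounded below by $\e^4$ or $\e^2$, never by $\e^6$. The paper obtains it by perturbing the $2\times2$ block in $\mu=\lambda^2$:
\[
\bbe=\e^2+\sqrt{b^2+\e^4\,},\qquad \aae=a+\bbe\e^2,\qquad \deltae=\delta+\bbe\e^2 ,
\]
so that $\bbe^2-\aae=\deltae$, $\bbe\gtrsim\e^2$ and $\aae,\deltae\gtrsim\e^4$. It then takes $\QQe=\mathrm{diag}(\Qtwo_\e\Atwo_\e,\Qtwo_\e)$, and by \eqref{E-Q-Ee2}
\[
\<\QQe V,V\>=\frac{\aae}{\bbe}\abs{\bbe V_1-V_2}^2+\frac{\deltae}{\bbe}\abs{V_2}^2+\deltae\abs{V_3}^2+\abs{\bbe V_3-V_4}^2 .
\]
Differentiating this produces only the ratios in \eqref{E-QepVV}, each $\lesssim\e^{-4/\kappa}\Lambda$ by \eqref{E-HGa}, \eqref{E-HGdelta} and \eqref{E-HGB}. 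The $\e^6$ lower bound for the weight of $V_1$ arises only through the product $\aae\deltae/\bbe$, never as an $\e^6$ regularization that has to be differentiated.

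Your route can in fact be completed. For this $p$ the D'Ancona--Spagnolo matrix is block diagonal in the paper's ordering, and its $\e^0$ part is $4\,\mathrm{diag}(\Qtwo\Atwo,\Qtwo)$. Up to a constant per level, the higher levels add $\e^2\bigl(\abs{V_2-bV_1}^2+(\delta+2a)\abs{V_1}^2+2\abs{V_2}^2+4b\abs{V_3}^2\bigr)$, $\e^4\bigl(\abs{V_3}^2+b\abs{V_1}^2\bigr)$ and $\e^6\abs{V_1}^2$. But closing \eqref{E-est Qep} then requires exactly this kind of explicit $2\times2$ computation. It also needs extra care where $b\ll\e^2$, since there your regularizations are $\e^2 b$ rather than $\e^2\bbe\gtrsim\e^4$.
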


The proof of \Cref{P-Q} constitutes the heaviest and more technical part in the proofs of \Cref{T-Gevrey-4} and \Cref{T-Gevrey-2}. 
In order not to lose the thread of the discussion,
we postpone the proof of Proposition~\ref{P-Q} to Section~\ref{S-Q}.

\medskip

Thanks to the quasi-symmetrizer $\QQe$,
we can define the \emph{approximated energy} $\EEe $.
For $V$ as in \eqref{eq:defn.V}, we let:
\begin{equation} \label{E-energy}
\EEe(t;\xi)
  \defeq {}  \< \QQe(t;\xi) V, V \>_{\C^4} \, .
\end{equation}

Differentiating~$\EEe(t;\xi)$ w.r.t.~$t$,
we get
\begin{align}
\EEe'
&  =  \< \QQe' V , V \>_{\C^4}
      +  \< \QQe V' , V \>_{\C^4}
      +  \< \QQe V , V' \>_{\C^4} \, , \notag
\intertext{%
hence, using~\eqref{E-SFhom}:}
\EEe'
&  =  \< \QQe' V , V \>_{\C^4}
      +  i\abs{\xi} \, \< \bigl( \QQe \AA-(\QQe \AA)^* \bigr)  V, V \>_{\C^4} \, . \label{E-EE2}
\end{align}

Using~\eqref{E-est Qep} to~estimate the first summand,
and~\eqref{E-est Qe 3} to~estimate the second
we get
\begin{align}
\EEe'
&	\lesssim \Bigl[ \e^{-\frac{4}{\kappa}} \, \Lambda + \e \, \abs{\xi} \Bigr] \EEe \, . \label{E-EEp le EE}
\intertext{\endgraf
We let $|\xi| \ge 1 $ and we choose $\e = \abs{\xi}^{-\frac{\kappa}{\kappa+4}}$,
so that $\e^{-\frac{4}{\kappa}}$ and $\e \, \abs{\xi}$
are balanced,
we get}
\EEe'
&  \lesssim  \abs{\xi}^{\frac{4}{\kappa+4}} \Lambda \, \EEe \, , \label{E-EEp le xi EE}
\intertext{\endgraf
By Gr\"{o}nwall Lemma and~\eqref{E-Lambda},
we get the energy estimate:}
\EEe(t;\xi)
&  \lesssim  \exp \bigl[ C \, \abs{\xi}^{\frac{4}{\kappa+4}} \bigr] \, \EEe(0;\xi) \, . \notag
\intertext{\endgraf
hence, thanks to \eqref{E-est Qe 1} we get:}
\bigl|V(t;\xi)\bigr|^2
&  \lesssim  \abs{\xi}^{\frac{6\,\kappa}{\kappa+4}} \,
  			\exp \bigl[ C \, \abs{\xi}^{\frac{4}{\kappa+4}} \bigr] \, \bigl|V(0;\xi)\bigr|^2 \, , \notag
\end{align}
which gives \eqref{E-estV}.

\subsection*{Proof of~Theorem~\ref{T-Gevrey-4} for non homogeneous equations}

If $\RR(t;\tau,\xi) \not\equiv 0$,
problem~\eqref{E-eq}--\eqref{E-IC} is transformed
into the first order linear system
\begin{equation} \label{E-SF}
\begin{cases}
V'(t;\xi) = i\abs{\xi}\AA(t;\xi) \, V(t;\xi) + \CC(t;\xi) \, V(t;\xi)  \\
V(0,\xi)=V_0(\xi) \, ,
\end{cases}
\end{equation}
where $\AA(t;\xi)$ is defined in~\eqref{E-A} and
\begin{equation} \label{E-C}
\CC(t;\xi)
  \defeq  \begin{pmatrix}
      0 & 0 & 0 & 0 \\
      0 & 0 & 0 & 0 \\
      0 & 0 & 0 & 0 \\
      c_0(t;\xi) & c_2(t;\xi) & c_1(t;\xi) & c_3(t;\xi)
      \end{pmatrix} \, ,
\end{equation}
with
\begin{align*}
c_0(t;\xi)
&  \defeq  c_{0,3}(t;\xi)
      +  c_{0,2}(t;\xi) (i\abs{\xi})^{-1}
      +  c_{0,1}(t;\xi) (i\abs{\xi})^{-2}
      +  c_{0,0}(t) (i\abs{\xi})^{-3}  \\
c_1(t;\xi)
&  \defeq  c_{1,2}(t;\xi)
      +  c_{1,1}(t;\xi) (i\abs{\xi})^{-1}
      +  c_{1,0}(t) (i\abs{\xi})^{-2}  \\
c_2(t;\xi)
&  \defeq  c_{2,1}(t;\xi)
      +  c_{2,0}(t) (i\abs{\xi})^{-1}  \\
c_3(t;\xi)
&  \defeq  c_{3,0}(t) \, ,
\end{align*}
and the $c_{j,\ell}(t;\xi)$ are defined in~\eqref{E-cjl}.

Defining the energy~$\EEe$ as in~\eqref{E-energy},
and proceding as before we get:
\begin{align*}
\EEe'
&  =  \< \QQe' V , V \>_{\C^4}
      +  \< \QQe V' , V \>_{\C^4}
      +  \< \QQe V , V' \>_{\C^4}  \\
&  =  \< \QQe' V , V \>_{\C^4}
      +  \< \QQe (i|\xi| \, \AA+\CC),V \>_{\C^4}
      +  \< \QQe V,(i|\xi| \, \AA+\CC) \>_{\C^4}  \\
&  =  \< \QQe' V , V \>_{\C^4}
      +  i\abs{\xi} \< (\QQe \AA - \AA^* \QQe ) V , V \>_{\C^4}
      +  2 \, \Re \< (\QQe \CC) V , V \>_{\C^4} \, .
\end{align*}

The first two terms can be estimated as in~the homogeneous case.
The third term is estimated with the Levi conditions:

\begin{Proposition} \label{P-Levi}
Let $\QQe$ be the symmetrizer given in Proposition~\ref{P-Q},
and let $\CC$ be the matrix in~\eqref{E-C}.
%	Let $\e = \abs{\xi}^{-\frac{\kappa}{\kappa+4}}$.
%	%
\begin{align}
&  \text{If \eqref{E-HGL c21c03}--\eqref{E-HGL c01} hold true,
		then there exists $\Lambda = \Lambda(t;\xi)$ verifying~\eqref{E-Lambda}
		such that} \label{E-est C}  \\
		&
		\bigl| \< (\QQe \CC) V , V \>_{\C^4} \bigr|
		  \le  \biggl[ \e^{-\frac{4}{k}}
		                   \sum_{j=0}^2 \Bigl( \e^{\frac{\kappa + 4}{\kappa}} \, \abs{\xi} \Bigr)^{-j}
		                +  \sum_{j=1}^3 ( \e \abs{\xi} )^{-j}
		                 \biggr]
		 \, \Lambda \, \<\QQe V, V\>_{\C^4} \, ,
		\
		\text{for any $V\in\C^4$} \, . \notag
\end{align}
\end{Proposition}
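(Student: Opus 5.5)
Since $\CC$ has only its last row nonzero, the first step is to write $\CC V = \ell(V)\, e_4$, where $e_4$ is the fourth vector of the canonical basis of $\C^4$ and
\[
\ell(V) \defeq c_0 V_1 + c_2 V_2 + c_1 V_3 + c_3 V_4 \, .
\]
Then $\<\QQe\CC V,V\>_{\C^4} = \ell(V)\,\<\QQe e_4,V\>_{\C^4}$. By the Cauchy--Schwarz inequality for the nonnegative form $\QQe$,
\[
\bigl|\<\QQe\CC V,V\>\bigr| \le |\ell(V)| \, \<\QQe e_4,e_4\>^{1/2} \, \<\QQe V,V\>^{1/2} .
\]
Everything therefore reduces to a weighted lower bound of the type
\[
|\ell(V)|^2 \, \<\QQe e_4,e_4\> \lesssim \bigl[\dots\bigr]^2 \Lambda^2 \, \<\QQe V,V\> ,
\]
and this will be checked coefficient by coefficient.

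For this I would use the explicit structure of the quasi-symmetrizer built in Section~\ref{S-Q}. I expect it to take the Jannelli/D'Ancona--Spagnolo form: a sum $\QQe = \sum_k \e^{2k}\, \mathcal{W}_k^*\mathcal{W}_k$ of squares of linear forms adapted to the block structure of $\AA$. These are, roughly, the ``strictly hyperbolic part'' forms with weights $1$, the forms involving $b$ with weight $\e^2$, and the forms involving $\delta$ and $a$ with weights up to $\e^6$, in agreement with the coercivity constant $c_1\e^6$ in~\eqref{E-est Qe 1}. From this representation one can read off two facts. First, $\<\QQe e_4,e_4\>\lesssim 1$. Second, $\<\QQe V,V\>$ dominates expressions such as $|V_4 - \dots|^2$, $\e^2\, b\,|\cdot|^2$, $\e^4\,\delta\,|\cdot|^2$ and $\e^6\, a\, b\,|V_1|^2$, up to the precise form of the linear combinations. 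The second step is then to rewrite $\ell(V)$ in terms of exactly these linear forms. This is why the Levi conditions involve the combinations $b\,c_{2,1}+c_{0,3}$ and $b\,c_{3,0}+c_{1,2}$ rather than the single coefficients: the combination $c_{0,3}V_1 + c_{2,1}V_2$ must be regrouped as $(b\,c_{2,1}+c_{0,3})V_1 + c_{2,1}(V_2 - bV_1)$, and similarly for $c_{3,0}, c_{1,2}$.

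The third step, which I expect to be the main obstacle, is the interpolation between the Levi conditions~\eqref{E-HGL c21c03}--\eqref{E-HGL c01} and the weights in $\QQe$. A typical term has a coefficient bounded by $\Lambda\,\delta^{\frac12-\frac1\kappa}$, while $\QQe$ only controls the corresponding component through $\e^2\delta^{1/2}$ or similar. I would split into two regions: $\delta \ge \e^{\gamma}$ and $\delta<\e^{\gamma}$. When $\delta \ge \e^{\gamma}$, I write $\delta^{\frac12-\frac1\kappa} = \delta^{1/2}\delta^{-1/\kappa}\le \delta^{1/2}\e^{-\gamma/\kappa}$. When $\delta<\e^{\gamma}$, I bound the coefficient directly and absorb it using the $\e$-weighted part of $\QQe$. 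Optimizing $\gamma$ in each case should produce exactly the factor $\e^{-4/\kappa}$. The exponents $\frac12-\frac{2}{3\kappa}$, $\frac12-\frac2\kappa$, $1-\frac4\kappa$, $\frac12-\frac4\kappa$ and $\frac12-\frac6\kappa$ are what one needs to get $\e^{-4/\kappa}$ when the relevant degeneracy is $b\delta$, $b$, $b^2$, and so on. When $\kappa$ is below the threshold, the condition is absent; in that case one instead uses only the coercivity $\<\QQe V,V\>\gtrsim \e^{j}\dots$ and the term contributes $(\e|\xi|)^{-j}$, because $\e^{-4/\kappa}\ge$ the relevant power of $\e^{-1}$.

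Finally, the terms of $c_0,c_1,c_2$ carrying factors $(i|\xi|)^{-1}$ and $(i|\xi|)^{-2}$ are treated in the same way. Each factor $|\xi|^{-1}$ lets one trade a power of the degeneracy: one uses $|\xi|^{-1}\le (\e^{\frac{\kappa+4}{\kappa}}|\xi|)^{-1}\e^{\frac{\kappa+4}{\kappa}}$ against the weaker Levi exponent. This yields the factors $(\e^{\frac{\kappa+4}{\kappa}}|\xi|)^{-j}$, $j=0,1,2$, together with the crude bound $(\e|\xi|)^{-j}$, $j=1,2,3$, for the lowest-order coefficients $c_{j,0}$, which satisfy no Levi condition and are controlled via the lower bound $c_1\e^6|V|^2$ in~\eqref{E-est Qe 1}. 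Integrability of $c_{k,\nu}$ from~\eqref{E-HGL reg} is absorbed into $\Lambda$, by replacing $\Lambda$ with $\Lambda+\sum|c_{k,\nu}|$, which still satisfies~\eqref{E-Lambda}.
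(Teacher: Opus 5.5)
There is a genuine gap. The part you get right is the opening reduction, which is also the paper's. Since only the last row of $\CC$ is nonzero, $\abs{\langle\QQe\CC V,V\rangle}\le\abs{\ell(V)}\,\langle\QQe e_4,e_4\rangle^{1/2}\sqrt{\EEe}$ with $\langle\QQe e_4,e_4\rangle=1$. Absorbing the $L^1$ coefficients into $\Lambda$ is also what the paper does.

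Everything after that rests on a guessed form of $\QQe$ that is not the one of Section~\ref{S-Q}, and with that form the bound cannot be reached. The paper's quasi-symmetrizer is $\QQe=\mathrm{diag}(\Qtwo_\e\Atwo_\e,\Qtwo_\e)$. It is built from the \emph{additively} regularized coefficients $\bbe=\e^2+\sqrt{b^2+\e^4}$, $\aae=a+\bbe\e^2$, $\deltae=\delta+\bbe\e^2$. The proof runs on the componentwise bounds \eqref{E-Q-e1}--\eqref{E-Q-e3}: $\abs{V_3}^2\le\deltae^{-1}\EEe$, $\abs{V_2}^2\le(\bbe/\deltae)\EEe$, $\abs{\bbe V_1-V_2}^2\le(\bbe/\aae)\EEe$, $\abs{\bbe V_3-V_4}^2\le\EEe$. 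A Levi bound $\abs{c}\le\Lambda X^{\frac12-\theta}$, tested against a control $X_\e^{-1/2}$ with $X\le X_\e$, gives $X_\e^{-\theta}$ at once. Then \eqref{E-all1} ($\deltae\gtrsim\e^4$, $\aae\bbe\gtrsim\e^6$, $\bbe\gtrsim\e^2$) turns this into $\e^{-4/\kappa}$, with no splitting into regions. With multiplicative weights as you describe (e.g.\ $V_3$ controlled only through $\e^4\delta\abs{V_3}^2$), \eqref{E-HGL c03c12} instead yields $\e^{-2}\delta^{-1/\kappa}\gtrsim\e^{-2}$. For $\kappa>2$ no choice of $\gamma$ brings this down to $\e^{-4/\kappa}$, so your interpolation step cannot close as planned.

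Two of your concrete steps also fail for the true $\QQe$.

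First, the regrouping $c_{0,3}V_1+c_{2,1}V_2=(b\,c_{2,1}+c_{0,3})V_1+c_{2,1}(V_2-bV_1)$ splits a controlled quantity into two uncontrolled ones. Take $a\equiv0$, $b\equiv1$, so $c_{0,3}=0$ by \eqref{E-HGL c03} while $c_{2,1}$ is merely bounded, and take $V=(\e^{-1},0,0,0)$. Then $\EEe=\bbe^2\sim1$, yet each of your two pieces has modulus $\abs{c_{2,1}}\e^{-1}\gg\e^{-4/\kappa}$ for $\kappa>4$, although their sum vanishes. The paper uses
\[
c_{0,3}V_1+c_{2,1}V_2=\frac{c_{0,3}}{\bbe}(\bbe V_1-V_2)+\frac{c_{2,1}(\bbe-b)}{\bbe}V_2+\frac{b\,c_{2,1}+c_{0,3}}{\bbe}V_2 ,
\]
which pairs $c_{0,3}$ with the $\aae$-weighted form and $b\,c_{2,1}+c_{0,3}$ with the $\deltae$-weighted one. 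Your analogy with the $(V_3,V_4)$ block, where $(\bbe c_3+c_1)V_3+c_3(V_4-\bbe V_3)$ is the right splitting, breaks because the first block carries $\Qtwo_\e\Atwo_\e$, not $\Qtwo_\e$.

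Second, the terms $c_{2,0}\abs{\xi}^{-1}V_2$ and $c_{1,0}\abs{\xi}^{-2}V_3$ cannot be handled by the crude coercivity $c_1\e^6\|V\|^2\le\EEe$. That gives $\e^{-3}\abs{\xi}^{-1}$ and $\e^{-3}\abs{\xi}^{-2}$ rather than $(\e\abs{\xi})^{-1}$ and $(\e\abs{\xi})^{-2}$. For $\e=\abs{\xi}^{-\kappa/(\kappa+4)}$ the first equals $\abs{\xi}^{(2\kappa-4)/(\kappa+4)}$, which exceeds $\abs{\xi}^{4/(\kappa+4)}$ as soon as $\kappa>4$ and would spoil the energy estimate. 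Here too you need $\abs{V_2}^2\le(\bbe/\deltae)\EEe\lesssim\e^{-2}\EEe$ and $\abs{V_3}^2\le\deltae^{-1}\EEe\lesssim\e^{-4}\EEe$.
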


We postpone the proof of Proposition~\ref{P-Levi} to Section~\ref{S-Q}.

\smallskip

Proposition~\ref{P-Levi} shows that,
choosing $\e = \abs{\xi}^{-\frac{\kappa}{\kappa+4}}$ as before,
the Levi conditions \eqref{E-HGL c21c03}--\eqref{E-HGL c01}
ensure that the presence of~the lower-order terms does not modify the estimate~\eqref{E-EEp le xi EE},
so we proceed as in the homogeneous case.
This concludes the proof of \Cref{T-Gevrey-4}.

\subsection*{Proof of~Theorem~\ref{T-Gevrey-2}}

The proof of~Theorem~\ref{T-Gevrey-2} is analogous to that of~Theorem~\ref{T-Gevrey-4},
differing primarily in some of the estimates employed.
Indeed, if the multiplicity of~the characteristic roots of~$\PP$ is at most double,
the estimates~\eqref{E-est Qe 1}, \eqref{E-est Qep} and \eqref{E-est C} can be improved
as follows.
\begin{sl}
\begin{align}
&  \makebox[0pt][l]{There exists positive constants $c_1,c_2$ such that}  & 
	\tag{\ref{E-est Qe 1}$'$} \label{E-est Qe 1 2}  \\
		& &
		c_1 \, \e^2 \, \|V\|_{\C^4}^2
		&
		\le  \<\QQe V,V\>_{\C^4}
		\le  c_2 \, \|V\|_{\C^4}^2 \, , &
		&
		\text{for any $V\in\C^4$} \, . \notag  \\
&  \makebox[0pt][l]{If \eqref{E-HGR}--\eqref{E-HGdelta} and~\eqref{E-HGB} hold true,
		then there exists $\Lambda = \Lambda(t;\xi)$ verifying~\eqref{E-Lambda}
		such that} &
	\tag{\ref{E-est Qep}$'$} \label{E-est Qep 2}  \\
		& &
		\bigl| \<\QQe' V, V\>_{\C^4} \bigr|
		&  \le  \e^{-\frac{2}{\kappa}} \, \Lambda \, \<\QQe V, V\>_{\C^4} \, ,  &
		&
		\text{for any $V\in\C^4$} \, . \notag  \\
&  \makebox[0pt][l]{%
		If \eqref{E-HGL2 c21c03}--\eqref{E-HGL2 c03c12} hold true,
		then there exists $\Lambda = \Lambda(t;\xi)$ verifying~\eqref{E-Lambda}
		such that} 
		\tag{\ref{E-est C}$'$} \label{E-est C 2}  \\
		& &
		\bigl| \< (\QQe \CC) V , V \>_{\C^4} \bigr|
		&  \le  \Bigl[ \e^{-\frac{2}{\kappa}} + ( \e \abs{\xi} )^{-1} \Bigr] \, \Lambda \, \<\QQe V, V\>_{\C^4} \, ,  &
		&
		\text{for any $V\in\C^4$} \, . \notag
\end{align}
\end{sl}

Using~these estimates we get
\[
\EEe'
	\lesssim \Bigl[ \e^{-\frac{2}{\kappa}} \, \Lambda + \e \, \abs{\xi} + ( \e \abs{\xi} )^{-1} \Bigr] \EEe \, ,
\]
instead of~\eqref{E-EEp le EE}.
For $|\xi|\geq 1$, we choose $\e = \abs{\xi}^{-\frac{\kappa}{\kappa+2}}$,
so that $\e^{-\frac{2}{\kappa}}$ and $\e \, \abs{\xi}$
are of~the same order, while $( \e \abs{\xi} )^{-1}$ is negligible.
Thus we get
\[
\EEe'
  \lesssim  \Lambda \, \abs{\xi}^{\frac{2}{\kappa+2}} \EEe \, ,
\]
which gives the well-posedness in $\gamma^s$ with $s < 1 + \frac{\kappa}{2}$.

\section{The quasi-simmetrizer} \label{S-Q}

In this section we prove the basic properties of~the symmetrizer
and the quasi-symmetrizer of~the biwave operator.
We start by recalling the definition and the properties of~the symmetrizer of~a $2\times2$ Sylvester matrix.

\subsection{The simmetrizer of~a \texorpdfstring{$2\times2$}{2 x 2} Sylvester matrix}

In the following $\<\cdot,\cdot\>_{\C^2}$ denotes the standard inner product in $\C^2$.

\smallskip

Let
\[
\Atwo
  \defeq  \begin{pmatrix}
           0 & 1 \\
          -a & 2b
          \end{pmatrix} \, ,
\]
be a $2\times2$ matrix in Sylvester form,
and assume that
\begin{equation} \label{E-delta ge 0}
\delta
  \defeq  b^2-a \ge0 \, ,
\end{equation}
so that $\Atwo$ has real eigenvalues.

Let
\[
\Qtwo
  \defeq  \begin{pmatrix}
          \delta+b^2 & -b  \\
          -b & 1
          \end{pmatrix} \, .
\]

$\Qtwo$ is symmetric and moreover:
\[
\<\Qtwo W,W\>_{\C^2} = \delta\abs{W_1}^2 + \abs{b\,W_1-W_2}^2 \, ,
\quad
\text{for any $W = \begin{pmatrix} W_1 \\ W_2 \end{pmatrix} \in\C^2$} \, ,
\]
hence, by~\eqref{E-delta ge 0},
we see that $\Qtwo$ is positive semi-definite.

Note also that
\[
\det(\Qtwo) = \delta \, .
\]

Since
\[
\Qtwo\Atwo
  =  \begin{pmatrix} ab & -a  \\ -a & b \end{pmatrix} \, ;
\]
we see that $\Qtwo\Atwo$ is symmetric.
Thus $\Qtwo$ is a \emph{symmetrizer of~$\Atwo$}.

\subsection{The quasi-simmetrizer of~a \texorpdfstring{$2\times2$}{2 x 2} Sylvester matrix}

Let
\begin{align}
\bbe
&  \defeq  \e^2 + \sqrt{b^2+\e^4 \, } \, , \label{E-be}  \\
\aae
&  \defeq  a + \bbe \e^2 \, , \label{E-ae}  \\
\deltae
&  \defeq  \delta + \bbe \e^2 \, , \label{E-deltae}
\end{align}
so that
\[
\bbe^2-\aae = \deltae \, .
\]

The following inequalities are straightforward to verify and will be used throughout:
\begin{gather}
\abs{\bbe - b} \lesssim \e^2 \, ,
\quad
\bbe  \gtrsim  \e^2 \, ,
\quad
\aae  \gtrsim  \e^4 \, ,
\quad
\deltae  \gtrsim  \e^4 \, , \label{E-all1}  \\
\frac{\bbe }{\aae } \lesssim \e^{-2} \, ,
\quad
\frac{\bbe }{\deltae } \lesssim \e^{-2} \, ,
\quad
\frac{\bbe }{\aae \deltae }
\lesssim \e^{-6} \, . \label{E-all2}
\end{gather}

\smallskip

We consider
\begin{equation} \label{E-A2e}
\Atwo_\e
  \defeq  \begin{pmatrix}
           0 & 1 \\
          -\aae & 2\bbe
          \end{pmatrix} \, ,
\end{equation}
and its symmetrizer
\begin{equation} \label{E-Q2e}
\Qtwo_\e
  \defeq  \begin{pmatrix}
          \deltae+\bbe^2 & -\bbe  \\
          -\bbe & 1
          \end{pmatrix} \, .
\end{equation}

$\Qtwo_\e$ enjoys properties similar to those of~$\Qtwo$:

\begin{Proposition} \hfill \label{P-Qtwoe}
\begin{enumerate}
\item  \label{I-Q2se}
		$\Qtwo_\e$ is symmetric and positive definite;
		moreover:
		\[
		\<\Qtwo_\e W,W\>_{\C^2} = \deltae\abs{W_1}^2 + \abs{\bbe\,W_1-W_2}^2 \, ,
		\qquad
		\text{for any $W = \begin{pmatrix} W_1 \\ W_2 \end{pmatrix} \in\C^2$} \, .
		\]

\item   \label{I-detQ2e}
		$\det(\Qtwo_\e) = \deltae$.

\item   \label{I-Q2A2se}
		$\Qtwo_\e\Atwo_\e$ is symmetric, since
		\[
		\Qtwo_\e\Atwo_\e = \begin{pmatrix} \aae\bbe & -\aae  \\ -\aae & \bbe \end{pmatrix} \, .
		\]
\end{enumerate}
\end{Proposition}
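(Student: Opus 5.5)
The three claims of Proposition~\ref{P-Qtwoe} are algebraic identities, so I will verify each one directly from the definitions \eqref{E-be}--\eqref{E-deltae}, \eqref{E-A2e} and \eqref{E-Q2e}. The only structural input is the identity $\bbe^2-\aae=\deltae$, noted right after \eqref{E-deltae}. It follows from
\[
\bbe^2-\aae = \bbe^2 - a - \bbe\e^2 = (b^2 - a) + \bbe^2 - b^2 - \bbe\e^2 ,
\]
together with $\bbe^2 - 2\e^2\bbe = b^2$. The latter holds because $(\bbe-\e^2)^2 = b^2+\e^4$. Hence $\bbe^2-b^2-\bbe\e^2 = \bbe\e^2$, which gives $\bbe^2-\aae = \delta+\bbe\e^2 = \deltae$.

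For item~\ref{I-Q2se}, symmetry is immediate since $\Qtwo_\e$ is real and its off-diagonal entries are both $-\bbe$. For the quadratic form, expand
\[
\deltae|W_1|^2+|\bbe W_1-W_2|^2 = (\deltae+\bbe^2)|W_1|^2 - \bbe\,(W_1\overline{W_2}+\overline{W_1}W_2) + |W_2|^2 ,
\]
which is exactly $\<\Qtwo_\e W,W\>_{\C^2}$. Positive definiteness then follows from $\deltae\gtrsim\e^4>0$ in \eqref{E-all1}. The form vanishes only if $W_1=0$, and then $W_2=\bbe W_1=0$.

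For item~\ref{I-detQ2e}, compute $\det\Qtwo_\e=(\deltae+\bbe^2)\cdot1-\bbe^2=\deltae$.

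For item~\ref{I-Q2A2se}, multiply the matrices. The first row gives $\bigl(\bbe\aae,\ \deltae+\bbe^2-2\bbe^2\bigr)=\bigl(\aae\bbe,\ \deltae-\bbe^2\bigr)=(\aae\bbe,-\aae)$, using the key identity. The second row gives $(-\aae,\ -\bbe+2\bbe)=(-\aae,\bbe)$. The product is therefore the stated symmetric matrix.

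There is no real obstacle here. The one step that needs care is the identity $\bbe^2-\aae=\deltae$, since it is exactly what makes the $(1,2)$ entry equal $-\aae$. That in turn depends on the specific choice $\bbe=\e^2+\sqrt{b^2+\e^4}$, which was made so that $\bbe^2-2\e^2\bbe=b^2$.
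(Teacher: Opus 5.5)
Your proof is correct and matches the direct computation the paper intends; the paper gives no written proof and presents the proposition as analogous to the calculations for $\Qtwo$. Your write-up also checks the identity $\bbe^2-\aae=\deltae$ via $(\bbe-\e^2)^2=b^2+\e^4$, which the paper only asserts, and correctly traces positive definiteness to $\deltae>0$, which follows from $\delta\ge0$.
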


To apply these results to the symmetrizer of~the biwave equation,
we assume that the eigenvalues of~$\Atwo$ are nonnegative.
This condition is equivalent to each of~the following:
\begin{enumerate}
\item  $a\ge0$, $b\ge0$, and $\delta \ge 0$,
\item  $\Qtwo\Atwo$ is positive semi-definite.
\end{enumerate}

\begin{Proposition}
Assume that the eigenvalues of~$\Atwo$ are nonnegative,
then
\begin{align}
\<\Qtwo_\e\Atwo_\e W,W\>_{\C^2}
&  \ge  \frac{\aae\deltae}{2\,\bbe} \, |W_1|^2
           +  \frac{\deltae}{2\,\bbe} \, |W_2|^2 \, ,  &
&  \text{for any $W = (W_1,W_2) \in \C^2$} \, . \label{E-Qe}  \\
\bigl| (\Atwo_\e - \Atwo) W \bigr|_{\C^2}
&  \lesssim  \e \<\Qtwo_\e\Atwo_\e W,W\>_{\C^2}^{\frac{1}{2}} \, ,  &
&  \text{for any $W \in \C^2$} \, . \label{E-A-Ae}
\end{align}

Let
\begin{equation} \label{E-R2e}
\Rtwoe
  \defeq  \Qtwo_\e \, ( \Atwo - \Atwo_\e ) \, ,
\end{equation}
we have
\begin{equation} \label{E-RRtwoe}
\bigl| \< \Rtwoe W , Z \>_{\C^2} \bigr|
  \lesssim  \e \Bigl[ \< \Qtwo_\e \Atwo_\e \, W ,W \>_{\C^2}
                  + \< \Qtwo_\e \, Z , Z \>_{\C^2} \Bigr]
\qquad
\text{for any $W,Z \in \C^2$} \, .
\end{equation}
\end{Proposition}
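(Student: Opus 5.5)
All three estimates reduce to explicit computations with the $2\times2$ matrices $\Qtwo_\e$, $\Atwo_\e$. They use only the definitions \eqref{E-be}--\eqref{E-deltae}, the bounds \eqref{E-all1}, and the boundedness of $a,b$ on $\zeroT\times\S^{n-1}$, which gives $\bbe\lesssim1$ for $0<\e\le1$. Hypothesis $a,b,\delta\ge0$ gives $\aae,\bbe,\deltae>0$ for $\e>0$.

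\emph{Proof of \eqref{E-Qe}.} By Proposition~\ref{P-Qtwoe}\eqref{I-Q2A2se},
\[
\<\Qtwo_\e\Atwo_\e W,W\>_{\C^2}=\aae\bbe|W_1|^2-2\aae\Re(W_1\overline{W_2})+\bbe|W_2|^2 .
\]
I complete the square in two ways, using $\bbe^2-\aae=\deltae$:
\[
\<\Qtwo_\e\Atwo_\e W,W\>_{\C^2}
=\bbe\Bigl|W_2-\tfrac{\aae}{\bbe}W_1\Bigr|^2+\tfrac{\aae\deltae}{\bbe}|W_1|^2
=\aae\bbe\Bigl|W_1-\tfrac{1}{\bbe}W_2\Bigr|^2+\tfrac{\deltae}{\bbe}|W_2|^2 .
\]
Dropping the squares in each identity and averaging the two lower bounds gives \eqref{E-Qe} at once.

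\emph{Proof of \eqref{E-A-Ae}.} The first row of $\Atwo_\e-\Atwo$ vanishes, so
\[
(\Atwo_\e-\Atwo)W=\bigl(0,\;-\bbe\e^2W_1+2(\bbe-b)W_2\bigr),
\]
and \eqref{E-all1} bounds the second entry by $\lesssim \e^2\bbe|W_1|+\e^2|W_2|$. By \eqref{E-Qe} it suffices to prove
\[
\e^4\bbe^2\lesssim \e^2\,\frac{\aae\deltae}{\bbe}
\qquad\text{and}\qquad
\e^4\lesssim\e^2\,\frac{\deltae}{\bbe}.
\]
The second holds because $\deltae\ge\bbe\e^2$. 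The first, i.e.\ $\aae\deltae\gtrsim\e^2\bbe^3$, is the one delicate point. It fails if one only uses $\aae,\deltae\ge\bbe\e^2$, which gives $\e^4\bbe^2$ and loses a factor $\e^{-1}$, so a sharper argument is needed. I use
\[
\aae+\deltae=b^2+2\bbe\e^2\gtrsim\bbe^2,
\]
which holds because $\bbe^2\lesssim b^2+\e^4$ and $\bbe\e^2\ge\e^4$. Hence $\max(\aae,\deltae)\gtrsim\bbe^2$, while $\min(\aae,\deltae)\ge\bbe\e^2$. The product is therefore $\gtrsim\e^2\bbe^3$, as required.

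\emph{Proof of \eqref{E-RRtwoe}.} Since $\Qtwo_\e$ is symmetric,
\[
\<\Rtwoe W,Z\>_{\C^2}=\<(\Atwo-\Atwo_\e)W,\Qtwo_\e Z\>_{\C^2}.
\]
The entries of $\Qtwo_\e$ are bounded (because $\bbe,\deltae\lesssim1$) and $\Qtwo_\e$ is positive definite, so $|\Qtwo_\e Z|^2\le\|\Qtwo_\e\|\<\Qtwo_\e Z,Z\>_{\C^2}\lesssim\<\Qtwo_\e Z,Z\>_{\C^2}$. Combining this with \eqref{E-A-Ae} and the inequality $xy\le\frac12(x^2+y^2)$ yields \eqref{E-RRtwoe}. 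The only step I expect to require care is the lower bound $\aae\deltae\gtrsim\e^2\bbe^3$ above.
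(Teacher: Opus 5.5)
Your proof is correct.

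For \eqref{E-Qe} it is the paper's argument in different form. The paper splits $\Qtwo_\e\Atwo_\e=\mathbf{D}_\e+\mathbf{S}_\e$, where $\mathbf{D}_\e$ is the diagonal form you end with, and checks $\mathbf{S}_\e\ge0$ via trace and determinant. Your average of the two completed squares is exactly $\langle\mathbf{S}_\e W,W\rangle$, so you simply exhibit that positivity explicitly.

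The real difference is in \eqref{E-A-Ae}. The paper never passes through the diagonal lower bound. It writes $|(\Atwo_\e-\Atwo)W|=|\langle Z,\overline W\rangle|$ with $Z=(\aae-a,\,-2(\bbe-b))$, applies Cauchy--Schwarz in the full form $\Qtwo_\e\Atwo_\e$, and computes exactly
$\langle(\Qtwo_\e\Atwo_\e)^{-1}Z,Z\rangle=\e^4\bbe/\aae+\bbe\bigl(\e^2-2(\bbe-b)\bigr)^2/\deltae$.
This is $\lesssim\e^2$ using only $\aae,\deltae\ge\bbe\e^2$ and $|\bbe-b|\lesssim\e^2$. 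Because you discard the off-diagonal part first, you need the sharper bound $\aae\deltae\gtrsim\e^2\bbe^3$. Your max/min proof of it is fine, but it is immediate from the exact identity $\aae+\deltae=\bbe^2$, which is just $\bbe^2-\aae=\deltae$. That identity gives $\e^2\bbe^3/(\aae\deltae)=\e^2\bbe\bigl(1/\aae+1/\deltae\bigr)\le2$. So your route avoids inverting the matrix and reuses \eqref{E-Qe}, at the price of one extra algebraic estimate. The paper's route shows that no fine estimate is needed once the whole quadratic form is used.

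For \eqref{E-RRtwoe}, the paper applies Cauchy--Schwarz for $\Qtwo_\e$. It uses that $X=(\Atwo_\e-\Atwo)W$ has zero first component while the $(2,2)$ entry of $\Qtwo_\e$ is $1$, so $\langle\Qtwo_\e X,X\rangle=|X|^2$ exactly. Your bound $|\Qtwo_\e Z|^2\le\|\Qtwo_\e\|\,\langle\Qtwo_\e Z,Z\rangle$ works too. Its constant, however, depends on $\sup b$ through $\|\Qtwo_\e\|\lesssim1$, which is harmless here since the coefficients are bounded on $\zeroT$. The paper's constants, by contrast, are absolute.
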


\begin{proof}[Proof of~\eqref{E-Qe}]
Let
\[
\mathbf{D}_\e
  \defeq
	\begin{pmatrix}
	\frac{\aae\deltae}{2\,\bbe} & 0 \\
	0 & \frac{\deltae}{2\,\bbe}
	\end{pmatrix}
\quad\text{and}\quad
\mathbf{S}_\e
  \defeq
	\begin{pmatrix}
	\frac{\aae\,(\bbe^2+\aae)}{2\,\bbe} & -\aae \\
	-\aae  &  \frac{\bbe^2+\aae}{2\,\bbe}
	\end{pmatrix} \, ,
\]
so that
\[
\Qtwo_\e\Atwo_\e
  =  \mathbf{D}_\e + \mathbf{S}_\e \, .
\]

As the trace and the determinant of~$\mathbf{S}_\e$ are positive,
we see that $\mathbf{S}_\e$ is positive definite,
consequently
\[
\<\Qtwo_\e\Atwo_\e W,W\>_{\C^2}
  \ge  \<\mathbf{D}_\e W,W\>_{\C^2} \, ,
\]
which gives \eqref{E-Qe}.
\end{proof}

\begin{proof}[Proof of~\eqref{E-A-Ae}]
As
\[
\Atwo_\e - \Atwo
  =  \begin{pmatrix}
     0 & 0 \\
     \aae-a & -2\,(\bbe-b)
     \end{pmatrix}
\]
we have 
\begin{equation} \label{E-A-AeZ}
\bigl| (\Atwo_\e - \Atwo) W \bigr|
  =  \bigl| \< Z , \overline{W} \>_{\C^2} \bigr| \, ,
\end{equation}
where
\[
Z
  \defeq  \begin{pmatrix} \aae-a \\ -2\,(\bbe-b) \end{pmatrix} \, .
\]

On the other side,
as $\Qtwo_\e\Atwo_\e$ is positive definite,
we have:
\begin{equation} \label{E-RZ1}
\bigl| \< Z , \overline{W} \>_{\C^2} \bigr|
  \le  \<(\Qtwo_\e\Atwo_\e)^{-1} Z,Z\>_{\C^2}^{\frac{1}{2}} \<\Qtwo_\e\Atwo_\e W,W\>_{\C^2}^{\frac{1}{2}} \, .
\end{equation}

Now, as
\[
(\Qtwo_\e\Atwo)^{-1}
  =  \frac{1}{\aae\deltae} \begin{pmatrix} \bbe  & \aae  \\ \aae  &  \aae\bbe \end{pmatrix}
\]
we have
\begin{align*}
\<(\Qtwo_\e\Atwo_\e)^{-1} Z,Z\>_{\C^2}
&  =  \frac{1}{\aae\deltae} \Bigl[\bbe\,(\aae-a)^2-4\,\aae\,(\aae-a)\,(\bbe-b)+4\,\aae\bbe(\bbe-b)^2 \Bigr] \, .
\intertext{%
As $\aae-a = \e^2 \, \bbe$, we have:}
\<(\Qtwo_\e\Atwo_\e)^{-1} Z,Z\>_{\C^2}
&  =  \frac{1}{\aae\deltae} \Bigl[\e^4\bbe^3-4\,\e^2 \aae\,\bbe\,(\bbe-b)+4\,\aae\bbe(\bbe-b)^2 \Bigr]  \\
&  =  \frac{1}{\aae\deltae}
      \Bigl[\e^4\bbe\deltae
            +\aae\bbe \bigl[ \e^2 - 2\,(\bbe-b)\bigr]^2 \Bigr] \, .
\end{align*}

Using~\eqref{E-all1} and~\eqref{E-all2},
we have
\begin{equation} \label{E-RZ1p}
\<(\Qtwo_\e\Atwo_\e)^{-1} Z,Z\>_{\C^2}
   \lesssim  \e^2 \, .
\end{equation}

Inserting \eqref{E-RZ1} and \eqref{E-RZ1p} in \eqref{E-A-AeZ},
we get \eqref{E-A-Ae}.
\end{proof}

\begin{proof}[Proof of~\eqref{E-RRtwoe}]
Using the Cauchy-Schwartz inequality for $\Qtwo_\e$ and \eqref{E-A-Ae}
we have:
\begin{align*}
\bigl| \< \Rtwoe W ,Z \>_{\C^2} \bigr|
&  =  \bigl| \< \Qtwo_\e \, ( \Atwo_\e - \Atwo ) W ,Z \>_{\C^2} \bigr|  \\
&  \le  \< \Qtwo_\e \, ( \Atwo_\e - \Atwo ) W ,( \Atwo_\e - \Atwo ) W \>_{\C^2}^{\frac{1}{2}} \,
        \< \Qtwo_\e \, Z ,Z \>_{\C^2}^{\frac{1}{2}}  \\
&  =  \bigl| ( \Atwo_\e - \Atwo ) W \bigr| \,
       \< \Qtwo_\e \, Z ,Z \>_{\C^2}^{\frac{1}{2}}  \\
&  \lesssim  \e \< \Qtwo_\e \Atwo_\e \, W ,W \>_{\C^2}^{\frac{1}{2}} \,
           \< \Qtwo_\e \, Z ,Z \>_{\C^2}^{\frac{1}{2}}  \\
&  \lesssim  \e \Bigl[ \< \Qtwo_\e \Atwo_\e \, W ,W \>_{\C^2}
                  + \< \Qtwo_\e \, Z ,Z \>_{\C^2} \Bigr] \, . \qedhere
\end{align*}
\end{proof}

\subsection{The quasi-simmetrizer of~a \texorpdfstring{$2\times2$}{2 x 2} block Sylvester matrix, general case}

Consider the matrix~$\AA$ as in \eqref{E-A}:
\[
\AA(t;\xi)
  \defeq{}
	\left(
	\begin{array}{@{}cc|cc@{}}
	0 & 0 & 1 & 0  \\
	0 & 0 & 0 & 1  \\\hline
	0 & 1 & 0 & 0  \\
	-a & 2b & 0 & 0
	\end{array}
	\right) \, .
\]

We use the previous considerations to show that the matrix
\[
\QQe
  \defeq
     \begin{pmatrix}
     \Qtwo_\e \Atwo_\e  &  \Ztwo  \\
     \Ztwo  &  \Qtwo_\e
     \end{pmatrix}
  =  \left(
	 \begin{array}{@{}cc|cc@{}}
     \aae \bbe  & -\aae  & 0 & 0  \vphantom{\bigl|}  \\
     -\aae  & \bbe  & 0 & 0  \vphantom{\bigl|}  \\\hline
     0 & 0 & \deltae +\bbe ^2 & -\bbe  \vphantom{\bigl|}  \\
     0 & 0 & -\bbe  & 1  \vphantom{\bigl|}
	\end{array}
	\right)
\]
is a quasi-symmetrizer for $\AA$,
according to Proposition~\ref{P-Q}.

\smallskip

{}From now on,
we denote by $\<\cdot,\cdot\>_{\C^4}$ the standard inner product in $\C^4$.

\smallskip

Let $V = (V_1,V_2,V_3,V_4) \in \C^4$,
we have
\[
\< \QQe V, V \>_{\C^4}
  =  \aae \bbe  \abs{V_1}^2 - \aae  V_1 \overline{V_2}
     - \aae  \overline{V_1} V_2 + \bbe  \abs{V_2}^2
     + \deltae  \abs{V_3}^2 + \abs{\bbe  V_3 - V_4}^2 \, ;
\]
gathering in~different ways the terms,
we can write
\begin{align}
\< \QQe V, V \>_{\C^4}
&  =  \frac{\aae \deltae }{\bbe } \, \abs{V_1}^2
      + \bbe \abs{V_2 - \frac{\aae }{\bbe }V_1}^2
      + \deltae  \abs{V_3}^2 + \abs{\bbe  V_3 - V_4}^2
        \label{E-Q-Ee1}  \\
\< \QQe V, V \>_{\C^4}
&  =  \frac{\aae }{\bbe } \, \abs{\bbe  V_1-V_2}^2
      + \frac{\deltae }{\bbe } \, \abs{V_2}^2
      + \deltae  \abs{V_3}^2 + \abs{\bbe  V_3 - V_4}^2 \, ,
        \label{E-Q-Ee2}
\end{align}
from which we get the following estimates
\begin{align}
\abs{V_1}^2
&  \le  \frac{\bbe }{\aae \deltae } \, \< \QQe V, V \>_{\C^4}  &
\abs{V_2 - \frac{\aae }{\bbe }V_1}^2
&  \le  \frac{1}{\bbe} \, \< \QQe V, V \>_{\C^4} \label{E-Q-e1}  \\
\abs{\bbe  V_1-V_2}^2
&  \le  \frac{\bbe }{\aae } \, \< \QQe V, V \>_{\C^4}  &
\abs{V_2}^2
&  \le  \frac{\bbe }{\deltae } \, \< \QQe V, V \>_{\C^4} \label{E-Q-e2}  \\
\abs{V_3}^2
&  \le  \frac{1}{\deltae} \, \< \QQe V, V \>_{\C^4}  &
\abs{\bbe  V_3 - V_4}^2
&  \le  \< \QQe V, V \>_{\C^4} \, . \label{E-Q-e3}
\end{align}

We are ready to prove Proposition~\ref{P-Q}.

\begin{proof}[Proof of~\eqref{E-est Qe 1}]
Using~\eqref{E-Q-e1}--\eqref{E-Q-e3}
we~have 
\begin{equation} \label{E-Qe ge V}
\begin{split}
\|V\|_{\C^4}^2
&  \le  |V_1|^2 + |V_2|^2 + (1+2 \, \bbe^2) \, |V_3|^2 + 2 \, |\bbe  V_3 - V_4|^2  \\
&  \le  \biggl[ \frac{\bbe }{\aae \deltae }
		+  \frac{\bbe }{\deltae }
		+  \frac{1+2 \, \bbe^2}{\deltae}
		+  1 \biggr] \< \QQe V, V \>_{\C^4} \, ;
\end{split}
\end{equation}
hence the lower bound in~\eqref{E-est Qe 1} follows from~\eqref{E-all2}.

The upper bound in~\eqref{E-est Qe 1} follows from the boundness of~the coefficients of~the operator.
\end{proof}

To prove \eqref{E-est Qe 3}
we introduce the following notation.

\begin{Notations}
For a vector $V=(V_1,V_2,V_3,V_4)^T \in \C^4$,
we~denote
$V^\I = (V_1,V_2)$ the vector in $\C^2$ formed by the first and second components of~$V$
and
$V^\II = (V_3,V_4)$ the vector in $\C^2$ formed by the third and fourth components of~$V$.
\end{Notations}

With the above notation, we have:
\begin{equation} \label{E-QQ-QA-Q}
\< \QQe V, V \>_{\C^4}
  =  \<\Qtwo_\e \Atwo_\e V^\I,V^\I \>_{\C^2} + \<\Qtwo_\e V^\II,V^\II \>_{\C^2} \, .
\end{equation}

\begin{proof}[Proof of~\eqref{E-est Qe 3}]
As
\begin{equation} \label{E-QQeAAe-n}
\QQe \AA
  =  \begin{pmatrix}
     \Qtwo_\e \Atwo_\e & \Ztwo \\
     \Ztwo & \Qtwo_\e
     \end{pmatrix}
     \begin{pmatrix}
     \Ztwo & \Itwo \\
     \Atwo & \Ztwo
     \end{pmatrix}
  =  \begin{pmatrix}
     \Ztwo & \Qtwo_\e \Atwo_\e  \\
     \Qtwo_\e \Atwo & \Ztwo
     \end{pmatrix} \, ,
\end{equation}
we have
\[
\QQe \AA-(\QQe \AA)^*
  =  \begin{pmatrix}
     \Ztwo & -\Rtwoe^*  \\
     \Rtwoe & \Ztwo
     \end{pmatrix}
\]
where $\Rtwoe$ is defined in \eqref{E-R2e},
hence
\[
\< \bigl( \QQe \AA - (\QQe \AA)^* \bigr) V , V \>_{\C^4}
  =  2\,\Im \< \Rtwoe V^\I ,V^\II \>_{\C^2} \, .
\]

Using \eqref{E-RRtwoe} and~\eqref{E-QQ-QA-Q}
we get~\eqref{E-est Qe 3}.
\end{proof}

\begin{proof}[Proof of~\eqref{E-est Qep}]
{}From~\eqref{E-Q-Ee2} we get
\begin{equation} \label{obiettivo}
\begin{split}
(\QQe' V,V)
&  =  \left(\frac{\aae }{\bbe }\right)'\abs{\bbe  V_1-V_2}^2
      +  \frac{2 \, \aae }{\bbe } \,
         \Re\bigl( \bbe  V_1-V_2 \, , \, \bbe 'V_1 \bigr)  \\
&  \qquad
      +  \left(\frac{\deltae }{\bbe }\right)'
         \abs{V_2}^2
      + \deltae '\abs{V_3}^2
           + 2 \, \Re\bigl( \bbe  V_3-V_4 \, , \, \bbe 'V_3 \bigr) \, .
\end{split}
\end{equation}

Thanks to~\eqref{E-Q-e1},~\eqref{E-Q-e2} and~\eqref{E-Q-e3} we have:
\begin{align*}
\left(\frac{\aae }{\bbe }\right)'\abs{\bbe  V_1-V_2}^2
&  \le  \frac{\aae }{\bbe } \,
        \left(\frac{\abs{\aae '}}{\aae }
              + \frac{\abs{\bbe '}}{\bbe }\right) \,
        \abs{\bbe  V_1-V_2}^2
   \le  \left(\frac{\abs{\aae '}}{\aae }
              + \frac{\abs{\bbe '}}{\bbe }\right) \, \<\QQe V,V\>_{\C^4}   \\
\frac{2\aae }{\bbe } \,
\Re\bigl( \bbe  V_1-V_2 \, , \, \bbe 'V_1 \bigr)
&  \le  2 \sqrt{\frac{\aae }{\bbe } \, } \, \abs{\bbe  V_1-V_2} \,
        \frac{|\bbe '|}{\sqrt{\deltae \, }} \,
        \sqrt{\frac{\aae \deltae }{\bbe } \, } \, \abs{V_1}
   \le  2 \, \frac{\abs{\bbe '}}{\sqrt{\deltae \, }} \, \<\QQe V,V\>_{\C^4}  \\
\left(\frac{\deltae }{\bbe }\right)'\abs{V_2}^2
&  \le  \frac{\deltae }{\bbe }
   \left(\frac{\abs{\deltae '}}{\deltae }
         + \frac{\abs{\bbe '}}{\bbe }\right) \, \abs{V_2}^2
   \le  \left(\frac{\abs{\deltae '}}{\deltae }
              + \frac{\abs{\bbe '}}{\bbe }\right) \, \<\QQe V,V\>_{\C^4}  \\
\deltae '\abs{V_3}^2
&   =   \frac{\deltae '}{\deltae }\deltae  \abs{V_3}^2
   \le  \frac{\abs{\deltae '}}{\deltae } \<\QQe V,V\>_{\C^4} ;  \\
2 \, \Re\bigl( \bbe  V_3-V_4 \, , \, \bbe 'V_3 \bigr)
&  \le  2\abs{\bbe  V_3-V_4} \,
        \frac{\abs{\bbe '}}{\sqrt{\deltae \, }} \,
        \sqrt{\deltae \, } \, \abs{V_3}
   \le  \frac{\abs{\bbe '}}{\sqrt{\deltae \, }} \<\QQe V,V\>_{\C^4} ,
\end{align*}

Thus, we get
\begin{equation} \label{E-QepVV}
\<\QQe' V, V\>_{\C^4}
  \lesssim \left[\frac{\abs{\aae '}}{\aae }
                 + \frac{\abs{\bbe '}}{\bbe }
                 + \frac{\abs{\deltae '}}{\deltae }
                 + \frac{\abs{\bbe '}}{\sqrt{\deltae \, }}
                  \right] \, \<\QQe V,V\>_{\C^4} \, .
\end{equation}

Now,
using~\eqref{E-be},~\eqref{E-ae} and~\eqref{E-deltae},
we show that all the terms in~the bracket
can be estimated by~$\e^{-\frac{4}{\kappa}} \, \Lambda$,
where $\Lambda=\Lambda(t;\xi)$ is a nonnegative measurable function verifying~\eqref{E-Lambda}.

For the forth term in~\eqref{E-QepVV},
recalling~\eqref{E-be}, we have
\begin{align}
\abs{\bbe '}
&   =    \Bigl|\bigl(\e^2 +\sqrt{b^2+\e^4 \, }\bigr)' \Bigr|
    =    \frac{b \, |b'|}{ \sqrt{b^2+\e^4 \, } }
   \le  |b'| \, ; \label{E-52}
\intertext{%
hence, using Hypothesis~\eqref{E-HGB}
and the fact that $\deltae\gtrsim\e^4$,
we get:}
\frac{\abs{\bbe '}}{\sqrt{\deltae \, }}
&  \lesssim  \frac{ \Lambda \, \delta^{\frac{1}{2}-\frac{1}{\kappa}}}
                  { \sqrt{\deltae \, } }
   \lesssim  \deltae^{-\frac{1}{\kappa}} \, \Lambda
   \lesssim  \e^{-\frac{4}{\kappa}} \, \Lambda \, . \label{E-11}
\end{align}

For the second term in~\eqref{E-QepVV},
as $\deltae \le \bbe^2$,
by~\eqref{E-11},
we have
\begin{equation} \label{E-12}
\frac{\abs{\bbe '}}{\bbe }
  \le  \frac{\abs{\bbe '}}{\sqrt{\deltae \, }}
   \lesssim  \e^{-\frac{4}{\kappa}} \, \Lambda \, .
\end{equation}

For the first and the third term in~\eqref{E-QepVV},
recalling~\eqref{E-ae} and~\eqref{E-deltae}, we have
\begin{align}
\frac{|\aae '|}{\aae }
&   =   \frac{\bigl|(a+ \bbe \e^2 )'\bigr|}
             {a +  \bbe \e^2 }
   \lesssim  \frac{|a'|}{a + \e^4}
        + \frac{|\bbe '|}{\bbe } \, , \label{E-13} \\
\frac{|\deltae '|}{\deltae }
&   =   \frac{\bigr|(\delta +  \bbe \e^2)'\bigr|}
             {\delta +  \bbe \e^2}
   \lesssim  \frac{|\delta'|}{\delta + \e^4}
        + \frac{|\bbe '|}{\bbe } \, . \label{E-14}
\end{align}

Using Hypothesis~\eqref{E-HGa} and~\eqref{E-HGdelta},
also the first and the third term in~\eqref{E-QepVV},
are estimated by~$\e^{-\frac{4}{\kappa}} \, \Lambda$,
and we get~ \eqref{E-est Qep}.
\end{proof}

Now we prove the Proposition~\ref{P-Levi}.

\begin{proof}[Proof of~\eqref{E-est C}]
Using the Cauchy-Schwartz inequality for $\QQe$
and the upper bound in~\eqref{E-est Qe 1}
we~have
\begin{equation} \label{E-58}
\bigl| \< \QQe C V , V \>_{\C^4} \bigr|
  \le  \< \QQe C V , C V \>_{\C^4}^{\frac{1}{2}} \, \< \QQe V , V \>_{\C^4}^{\frac{1}{2}}
  \lesssim  \|\CC V\|_{\C^4} \, \< \QQe V , V \>_{\C^4}^{\frac{1}{2}} \, .
\end{equation}

Writing
\[
c_1 \, V_3 + c_3 \, V_4
  =  (\bbe \, c_3 + c_1) \, V_3 + c_3 \, (V_4-\bbe  V_3) \, ,
\]
we have:
\begin{align*}
\|\CC V\|_{\C^4}
&  =  |c_0 \, V_1 + c_2 \, V_2 + c_1 \, V_3 + c_3 \, V_4|  \\
&  \lesssim  |c_0 \, V_1 + c_2 \, V_2| + |\bbe \, c_3 + c_1| \, |V_3| + |c_3| \, |V_4-\bbe  V_3|
   =  \I + \II + \III \, ,
\end{align*}
which we estimate using the Levi conditions.

\paragraph*{Estimate of~\texorpdfstring{$\I$}{I}}

We have
\begin{align*}
\I
  =  |c_0 \, V_1 + c_2 \, V_2|
&  \lesssim  |c_{0,3} \, V_1 + c_{2,1} \, V_2|
			+  |c_{0,2}| \, \abs{\xi}^{-1} \abs{V_1}
			+  |c_{0,1}| \, \abs{\xi}^{-2} \abs{V_1}  \\
&  \qquad  +  |c_{0,0}| \, \abs{\xi}^{-3} \abs{V_1}
			+  |c_{2,0}| \, \abs{\xi}^{-1} \abs{V_2}  \\
&   =  \I_A + \I_B + \I_C + \I_D + \I_E \, .
\end{align*}

\subparagraph*{Estimate of~$\I_A$}

As
\[
c_{0,3} \, V_1 + c_{2,1} \, V_2
  =  \frac{c_{0,3}}{\bbe} \, (\bbe V_1 - V_2)
      + \frac{c_{2,1} \, (\bbe - b)}{\bbe} \, V_2
      + \frac{c_{2,1} \, b + c_{0,3}}{\bbe} \, V_2 \, ,
\]
using~\eqref{E-Q-e2}
we have
\begin{align}
\I_A
&  \lesssim  \Biggl[ \frac{\abs{c_{0,3}}}{\sqrt{\aae \bbe \, }}
      +  \frac{|\bbe - b| \, \abs{c_{2,1}}}{\sqrt{\bbe \deltae \, }}
      +  \frac{|b \, c_{2,1} + c_{0,3}|}{\sqrt{\bbe \deltae \, }} \Biggr] \sqrt{\EEe \, } \, .
  \label{E-IA-fin}
\intertext{\endgraf
Using Hypothesis~\eqref{E-HGL c21c03},~\eqref{E-HGL c03} and~\eqref{E-HGL c21}
we have}
\I_A
&  \lesssim  \Biggl[ \frac{(ab)^{\frac{1}{2}-\frac{2}{3\kappa}}}{\sqrt{\aae \bbe \, }}
      +  \frac{|\bbe - b| \, b^{\frac{1}{2}-\frac{2}{\kappa}}}{\sqrt{\bbe \deltae \, }}
      +  \frac{(b \delta)^{\frac{1}{2}-\frac{2}{3\kappa}}}{\sqrt{\bbe \deltae \, }} \Biggr]
                \Lambda \, \sqrt{\EEe \, } \notag  \\
&  \lesssim  \Bigl[ (\aae \bbe)^{-\frac{2}{3\kappa}}
      +  \frac{|\bbe - b|}{\sqrt{\deltae \, }} \, \bbe ^{-\frac{2}{\kappa}}
      +  (\bbe \deltae)^{-\frac{2}{3\kappa}} \Bigr] \Lambda \, \sqrt{\EEe \, } \, , \notag
\end{align}
hence, thanks to~\eqref{E-all1}, we get
\begin{equation} \label{E-IA-fin2}
\I_A
  \lesssim  \e^{-\frac{4}{\kappa}} \, \Lambda \, \sqrt{\EEe \, } \, .
\end{equation}

\subparagraph*{Estimate of~$\I_B$}

Using~\eqref{E-Q-e1}, Hypothesis~\eqref{E-HGL c02}, \eqref{E-all1} and~\eqref{E-all2}
we have
\begin{align*}
\I_B
   =  |c_{0,2}| \, \abs{\xi}^{-1} \abs{V_1}
&  \lesssim  |c_{0,2}| \, \abs{\xi}^{-1} \, \Bigl(\frac{\bbe }{\aae \deltae }\Bigr)^{\frac{1}{2}} \, \sqrt{\EEe\,}  \\
&  \lesssim  b^{1-\frac{4}{\kappa}} \, \abs{\xi}^{-1} \,
            \Bigl(\frac{\bbe }{\aae \deltae }\Bigr)^{\frac{1}{2}} \, \Lambda \, \sqrt{\EEe\,}  \\
&  \le   \Bigl(\frac{\bbe}{\aae}\Bigr)^{1-\frac{4}{\kappa}} \,
             \frac{1}{\aae^{-\frac{1}{2}+\frac{4}{\kappa}}} \,
             \Bigl(\frac{\bbe }{\deltae }\Bigr)^{\frac{1}{2}} \, \abs{\xi}^{-1} \Lambda \, \sqrt{\EEe\,}  \\
&  \lesssim  \e^{-2\,(1-\frac{4}{\kappa})-4\,(-\frac{1}{2}+\frac{4}{\kappa})-1} \,
             \abs{\xi}^{-1} \, \Lambda \, \sqrt{\EEe\,}
    =  \e^{-1 - \frac{8}{\kappa}} \, \abs{\xi}^{-1} \, \Lambda \, \sqrt{\EEe\,} \, .
\end{align*}

\subparagraph*{Estimate of~$\I_C$}

Using~\eqref{E-Q-e1}, Hypothesis~\eqref{E-HGL c01}, \eqref{E-all1} and~\eqref{E-all2}
we have
\begin{align*}
\I_C
  =  |c_{0,1}| \, \abs{\xi}^{-2} \abs{V_1}
&  \lesssim  |c_{0,1}| \, \abs{\xi}^{-2} \, \Bigl(\frac{\bbe }{\aae \deltae }\Bigr)^{\frac{1}{2}} \, \sqrt{\EEe\,}  \\
&  \le  \Lambda \, b^{\frac{1}{2}-\frac{6}{\kappa}} \,
            \abs{\xi}^{-2} \,
            \Bigl(\frac{\bbe }{\aae \deltae }\Bigr)^{\frac{1}{2}} \, \sqrt{\EEe\,}  \\
&   =   \Bigl(\frac{\bbe}{\aae}\Bigr)^{\frac{1}{2}-\frac{6}{\kappa}} \,
        \frac{1}{\aae^{\frac{6}{\kappa}}} \,
        \Bigl(\frac{\bbe }{\deltae }\Bigr)^{\frac{1}{2}} \,
        \abs{\xi}^{-2} \, \Lambda \, \sqrt{\EEe\,}  \\
&  \lesssim  \e^{-2 (\frac{1}{2}-\frac{6}{\kappa})-\frac{24}{\kappa}-1} \, \abs{\xi}^{-2} \, \Lambda
      =      \e^{-2-\frac{12}{\kappa}} \, \abs{\xi}^{-2} \, \Lambda \, \sqrt{\EEe\,} \, .
\end{align*}

\subparagraph*{Estimate of~$\I_D$}

As $c_{0,0}\in L^1\bigl(\zeroT\bigr)$,
using~\eqref{E-Q-e1} and~\eqref{E-all2}
we have
\begin{align*}
\I_D
   =  |c_{0,0}| \, \abs{\xi}^{-3} \abs{V_1}
  \lesssim  \abs{\xi}^{-3} \Bigl(\frac{\bbe }{\aae \deltae }\Bigr)^{\frac{1}{2}} \, \Lambda \, \sqrt{\EEe \, }
  \lesssim  ( \e \abs{\xi} )^{-3} \, \Lambda \, \sqrt{\EEe \, } \, .
\end{align*}

\subparagraph*{Estimate of~$\I_E$}

As $c_{2,0}\in L^1\bigl(\zeroT\bigr)$,
using~\eqref{E-Q-e2} and~\eqref{E-all2}
we have
\[
\I_E
  =  |c_{2,0}| \, \abs{\xi}^{-1} \abs{V_2}
  \lesssim  \abs{\xi}^{-1} \, \sqrt{ \frac{\bbe }{\deltae } \, } \, \Lambda \, \sqrt{\EEe \, }
  \lesssim  ( \e \abs{\xi} )^{-1} \, \Lambda \, \sqrt{\EEe \, } \, .
\]

Gathering the above estimates, we have:
\[
\I
  \lesssim  \Bigl[ \e^{-\frac{4}{k}}
                +  \e^{-1-\frac{8}{\kappa}} \, \abs{\xi}^{-1}
                +  \e^{-2-\frac{12}{\kappa}} \, \abs{\xi}^{-2}
                + ( \e \abs{\xi} )^{-1}
                + ( \e \abs{\xi} )^{-3}
                 \Bigr] \,
         \Lambda \, \sqrt{\EEe \, } \, .
\]

\paragraph*{Estimate of~\texorpdfstring{$\II$}{II}}

Using~\eqref{E-Q-e3},
we have
\[
|\bbe \, c_3 + c_1| \, |V_3|
  \lesssim  \frac{|\bbe \, c_3 + c_1|}{\sqrt{\deltae \, }} \, \sqrt{\EEe \, } \, ,
\]
and moreover
\[
|\bbe \, c_3 + c_1|
  \lesssim  |b \, c_{3,0} + c_{1,2}| + |\bbe - b| \, |c_{3,0}|
            +  |c_{1,1}| \abs{\xi}^{-1} + |c_{1,0}| \abs{\xi}^{-2} \, .
\]

Using Hypothesis~\eqref{E-HGL c03c12} and~\eqref{E-all1}
we have
\[
\frac{|b \, c_{3,0} + c_{1,2}|}{\sqrt{\deltae \, }}
  \lesssim  \frac{\delta^{\frac{1}{2}-\frac{1}{k}}}{\sqrt{\deltae \, }} \, \Lambda
  \lesssim  \deltae^{-\frac{1}{k}} \, \Lambda
  \lesssim  \e^{-\frac{4}{k}} \, \Lambda \, .
\]

Using~\eqref{E-all1}, we have
\[
\frac{|\bbe - b| \, |c_{3,0}|}{\sqrt{\deltae \, }}
  \lesssim  \frac{\e^2}{\e^2}  =  1 \, .
\]

Using Hypothesis~\eqref{E-HGL c11}, \eqref{E-all1} and~\eqref{E-all2}
we have
\begin{align*}
\frac{|c_{1,1}| \abs{\xi}^{-1}}{\sqrt{\deltae \, }}
&  \le       \Lambda \, \frac{b^{\frac{1}{2}-\frac{4}{\kappa}} \, \abs{\xi}^{-1}}{\sqrt{\deltae \, }}
   \le       \Lambda \, \Bigl(\frac{\bbe}{\deltae}\Bigr)^{\frac{1}{2}-\frac{4}{\kappa}} \,
            \frac{1}{\deltae^{\frac{4}{\kappa}}} \, \abs{\xi}^{-1}  \\
&  \lesssim  \e^{-2(\frac{1}{2}-\frac{4}{\kappa})-\frac{16}{\kappa}} \, \abs{\xi}^{-1} \, \Lambda
   \lesssim  \e^{-1-\frac{8}{\kappa}} \, \abs{\xi}^{-1} \, \Lambda \, .
\end{align*}

As $c_{1,0} \in L^1\bigl(\zeroT\bigr)$,
we have
\[
\frac{|c_{1,0}| \abs{\xi}^{-2}}{\sqrt{\deltae \, }}
  \lesssim  (\e \abs{\xi})^{-2} \, \Lambda \, .
\]

Gathering the above estimates, we have:
\[
\II
  \lesssim  \bigl[ e^{-\frac{4}{k}} + \e^{-1-\frac{8}{\kappa}} \, \abs{\xi}^{-1}
                   + ( \e \abs{\xi} )^{-2} \bigr] \,
            \Lambda \, \sqrt{\EEe \, } \, .
\]

\paragraph*{Estimate of~\texorpdfstring{$\III$}{III}}

As $c_{3,0} \in L^1\bigl(\zeroT\bigr)$,
we have
\[
\III
  \lesssim  \Lambda \, \sqrt{\EEe \, } \, .
\]

\paragraph*{Conclusion of~the proof}

Gathering the estimates of~$\I$, $\II$ and~$\III$ in~\eqref{E-58}
we get~\eqref{E-est C}.
\end{proof}

\subsection{The quasi-simmetrizer of~a $2\times2$ block Sylvester matrix,
	with at most double multiplicity}

As $b(t;\xi) \ge \beta_0$,
with $\beta_0>0$,
we have
\begin{equation} \label{E-all 2}
\aae  \gtrsim  \e^2 \, ,
\quad
\deltae  \gtrsim  \e^2 \, ,
\quad
\frac{\bbe }{\aae \deltae }  \gtrsim  \e^{-2} \, ,
\end{equation}
instead of~\eqref{E-all1} and~\eqref{E-all2}.
The first two inequalities follows from the definition of~$\aae$ and $\deltae$.
The last inequality follows from the identity
\[
\frac{\bbe }{\aae \deltae }
  =  \frac{1}{\bbe } \Bigl( \frac{1}{\aae } + \frac{1}{\deltae } \Bigr) \, .
\]

The proof of~\eqref{E-est Qe 1 2} and~\eqref{E-est C 2} follows the same lines
as those of~\eqref{E-est Qe 1} and~\eqref{E-est C},
with the only modification being the use of~estimate~\eqref{E-all 2}
in place of~estimate~\eqref{E-all1} and~\eqref{E-all2}.

\begin{proof}[Proof of~\eqref{E-est C 2}]
As in the proof of~\eqref{E-est C},
it's enough to estimate $\|\CC \, V\|_{\C^4}$.

Let $\CC_0(t;\xi)$ be the homogeneous part of~order $0$ of~$\CC(t;\xi)$:
\[
\CC_0(t;\xi)
  \defeq  \begin{pmatrix}
	      0 & 0 & 0 & 0 \\
	      0 & 0 & 0 & 0 \\
	      0 & 0 & 0 & 0 \\
	      c_{0,3}(t;\xi) & c_{2,1}(t;\xi) & c_{1,2}(t;\xi) & c_{3,0}(t;\xi)
	      \end{pmatrix} \, ,
\]
and let
\[
\CC_1(t;\xi)
  \defeq  \CC(t;\xi) - \CC_0(t;\xi) \, ,
\]
so that
\[
\|\CC \, V\|_{\C^4}
  \le  \|\CC_0 \, V\|_{\C^4} + \|\CC_1 \, V\|_{\C^4} \, .
\]

We estimate $\mathscr{C}_0$ via the Levi conditions,
while the lower-order terms in $\mathscr{C}_1$ are controlled directly by the energy,
owing to the fact that the characteristic roots are at most double.

\smallskip

We have:
\begin{align*}
\|\CC_0 V\|_{\C^4}
&  =  |c_{0,3} \, V_1 + c_{2,1} \, V_2 + c_{2,1} \, V_3 + c_{3,0} \, V_4|  \\
&  \lesssim  |c_{0,3} \, V_1 + c_{2,1} \, V_2| + |\bbe \, c_{3,0} + c_{1,2}| \, |V_3| + |c_{3,0}| \, |V_4-\bbe  V_3|
   =  \I + \II + \III \, ,
\end{align*}

\subparagraph*{Estimate of~$\I$}

As
\begin{align*}
c_{0,3} \, V_1 + c_{2,1} \, V_2
&  =  \frac{c_{0,3}}{\bbe} \, (\bbe V_1 - V_2)
      + \frac{c_{2,1} \, (\bbe - b)}{\bbe} \, V_2
      + \frac{c_{2,1} \, b + c_{0,3}}{\bbe} \, V_2 \, ,
\intertext{%
using~\eqref{E-Q-e2}
we have}
\I
&  \lesssim  \Biggl[ \frac{\abs{c_{0,3}}}{\sqrt{\aae \bbe \, }}
      +  \frac{|\bbe - b| \, \abs{c_{2,1}}}{\sqrt{\bbe \deltae \, }}
      +  \frac{|b \, c_{2,1} + c_{0,3}|}{\sqrt{\bbe \deltae \, }} \Biggr] \sqrt{\EEe \, } \, .
\intertext{\endgraf
The second term is bounded uniformly w.r.t. $\e$.
Using Hypothesis~\eqref{E-HGL2 c21c03} and~\eqref{E-HGL2 c03},
to estimate the first and third term, we get}
\I
&  \lesssim  \Biggl[ \frac{a^{\frac{1}{2}-\frac{1}{\kappa}}}{\sqrt{\aae \bbe \, }}
      +  \frac{|\bbe - b| \, \abs{c_{2,1}}}{\sqrt{\bbe \deltae \, }}
      +  \frac{\delta^{\frac{1}{2}-\frac{1}{\kappa}}}{\sqrt{\bbe \deltae \, }} \Biggr]
                \Lambda \, \sqrt{\EEe \, }  \\
&  \lesssim  \Bigl[ \aae ^{-\frac{1}{\kappa}}
      +  \e^2 \deltae ^{-\frac{1}{2}}
      +  \deltae ^{-\frac{1}{\kappa}} \Bigr] \Lambda \, \sqrt{\EEe \, }
   \lesssim  \e^{-\frac{2}{\kappa}} \, \Lambda \, \sqrt{\EEe \, } \, .
\end{align*}

\subparagraph*{Estimate of~$\II$}

We have
\[
|\bbe \, c_{3,0} + c_{1,2}| \, |V_3|
  \lesssim  \frac{|b \, c_{3,0} + c_{1,2}| + |\bbe - b| \, |c_{3,0}|}
                 {\sqrt{\deltae \, }} \, \sqrt{\EEe \, } \, .
\]

Using Hypothesis~\eqref{E-HGL c03c12}
we have
\[
\frac{|b \, c_{3,0} + c_{1,2}|}{\sqrt{\deltae \, }}
  \lesssim  \frac{\delta^{\frac{1}{2}-\frac{1}{k}}}{\sqrt{\deltae \, }} \, \Lambda
  \lesssim  \deltae^{-\frac{1}{k}} \, \Lambda
  \lesssim  \e^{-\frac{2}{k}} \, \Lambda \, ,
\]
whereas the second term is bounded uniformly w.r.t. $\e$.
Thus, we get:
\[
\II
  \lesssim  \e^{-\frac{2}{\kappa}} \, \Lambda \, \sqrt{\EEe \, } \, .
\]

\subparagraph*{Estimate of~$\III$}

As $c_{3,0} \in L^1\bigl(\zeroT\bigr)$,
using~\eqref{E-Q-e3},
we get
\[
\III
  \lesssim  \Lambda \, \sqrt{\EEe \, } \, .
\]

\paragraph*{Estimate of~$\|\mathscr{C}_0 V\|_{\C^4}$}

Gathering the estimates of~$\I$, $\II$ and~$\III$,
we get
\[
\|\mathscr{C}_0 V\|_{\C^4}
  \lesssim  \e^{-\frac{2}{k}} \, \Lambda \, \sqrt{\EEe \, } \, .
\]

\paragraph*{Estimate of~$\|\mathscr{C}_1 V\|_{\C^4}$}

We have
\[
\|\mathscr{C}_1 V\|_{\C^4}
  \lesssim  \Bigl[ \abs{\xi}^{-1} \, \|V\|_{\C^4} + \abs{\xi}^{-2} \, \|V\|_{\C^4} + \abs{\xi}^{-3} \, \|V\|_{\C^4} \Bigr] \Lambda \, ,
\]
and, since it's enough to prove the energy estimate only for $|\xi|\ge1$ (cf.~\eqref{E-estV}),
we have
\[
\|\mathscr{C}_1 V\|_{\C^4}
  \lesssim  \abs{\xi}^{-1} \, \Lambda  \, \|V\|_{\C^4} \, ,
\qquad
\text{for $|\xi|\ge1$} \, .
\]

Using \eqref{E-est Qe 1 2} we have
\[
\|\mathscr{C}_1 V\|_{\C^4}
  \lesssim  ( \e \abs{\xi} )^{-1} \, \Lambda \, \sqrt{\EEe \, } \, .
\]

The estimates for $\|\mathscr{C}_0 V\|_{\C^4}$ and $\|\mathscr{C}_1 V\|_{\C^4}$
yelds the estimate for $\|\mathscr{C} V\|_{\C^4}$,
and~\eqref{E-est C 2} follows.
\end{proof}

\appendix
\section{Operators with constant coefficients} \label{A-1}

Various conditions equivalent to the classical G\aa rding condition~\eqref{E-Garding}
have been found (see~\cite[Theorem~12.4.6]{Hormander},~\cite[Theorem~2.1]{Svensson-1969}),
we use the~formulation given by~Peyser~\cite{Peyser-1963a}.
In order to~state this condition,
we give the following definition:

\begin{Definition} \label{D-pd}
Let
\[
\PP(\tau,\xi) \equiv  \bigl(\tau-\tau_1(\xi)\bigr) \cdots \bigl(\tau-\tau_m(\xi)\bigr)
\]
be a hyperbolic polynomial of~degree $m\ge2$,
whose coefficients depend on~$\xi\in\R^n$.
We~say that a polynomial $\RR(\tau,\xi)$ of~degree~$\le m-1$
has a~\emph{proper decomposition w.r.t.~$\PP(\tau,\xi)$}
if~there exist some functions $\ell_k \in L^\infty(\R^n)$ such that
\begin{equation} \label{E-dec ell 0}
\RR(\tau,\xi)
  =  \sum_{k=1}^m \ell_k(\xi) \, \PP_{\wh{k}}(\tau,\xi) \, ,
\qquad
\text{for all~$\xi\in\R^n$} \, ,
\end{equation}
where
\begin{equation} \label{E-reduced}
\PP_{\wh{k}}(\tau,\xi)
  \equiv  \prod_{\substack{j=1,\dotsc,m \\ j\ne k}}
          \bigl(\tau-\tau_j(\xi)\bigr) \, ,
\qquad k=1,\dotsc,m.
\end{equation}
are the~so called \emph{reduced} (or~\emph{incomplete}) polynomials of~$\PP(\tau,\xi)$.
\end{Definition}

\begin{Example} \label{Ex-dtPP}
As
\begin{equation} \label{E-Pp Ph}
\partial_\tau \PP(\tau,\xi)
  =  \sum_{h=1}^m \PP_{\wh{h}}(\tau,\xi) \, ,
\end{equation}
the polynomial $\partial_\tau \PP(\tau,\xi)$
has a proper decomposition w.r.t.~$\PP(\tau,\xi)$.
\end{Example}

According to~\cite[Theorem~2.1]{Svensson-1969},
\eqref{E-Garding} is equivalent to the following condition
given by~Peyser \cite{Peyser-1963a}:
\begin{sl}
\begin{equation} \label{E-PD}
\text{For each $d=1,\dotsc,m-1$
    $\RR_d(\tau,\xi)$ has a proper decomposition
    w.r.t.~$\partial_\tau^{m-1-d} \PP(\tau,\xi)$.}
\end{equation}
\end{sl}
Condition~\eqref{E-PD} means that each $\RR_d(\tau,\xi)$
is a linear combination, with bounded coefficients, of~the reduced polynomials of~$\partial_\tau^{m-1-d} \PP(\tau,\xi)$.

\smallskip

In this appendix we explicit the conditions of~proper decomposition
w.r.t.~a biquadratic polynomial
\begin{equation} \label{E-P}
\PP(\tau,\xi)
  =  \tau^4 - 2 \, b(\xi) \, \tau^2 + a(\xi) \, .
\end{equation}

\begin{Notations}
In the following, given two continuous functions $\varphi(\xi)$ and $\psi(\xi)$ with
\[
\bigl|\varphi(\xi)\bigr|
  \lesssim  \psi(\xi) \, ,
\]
we denote
\[
\Dfrac{\varphi(\xi)}{\psi(\xi)}
  \defeq
  \begin{cases}
  \frac{\varphi(\xi)}{\psi(\xi)}  &  \text{if $\psi(\xi) \ne 0$} \, ,  \\*[3pt]
  0  &  \text{if $\psi(\xi) = 0$} \, ,
  \end{cases}
\]
so that
\[
\varphi(\xi)
  =  \Dfrac{\varphi(\xi)}{\psi(\xi)} \, \psi(\xi) \, ,
\]
\text{for any $\xi\in\R^n$} \, .
\end{Notations}

\subsection{Proper decomposition of~$\RR_1(\tau,\xi)$ w.r.t.~$\partial_{\tau\tau}^2 \PP(\tau,\xi)$}

\begin{Lemma}
The polynomial
\[
\RR_1(\tau,\xi)
  =  c_{1,0}(\xi) \, \tau + c_{0,1}(\xi)
\]
has a proper decomposition w.r.t.~$\partial_{\tau\tau}^2 \PP(\tau,\xi)$ if, and only if,
\begin{equation} \label{E-pd1}
\bigl|c_{0,1}(\xi)\bigr|
  \lesssim  \sqrt{b(\xi) \, } \, .
\end{equation}
\end{Lemma}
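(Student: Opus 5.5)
The plan is to compute everything explicitly, since $\partial_{\tau\tau}^2\PP$ is just a quadratic in~$\tau$. From~\eqref{E-P},
\[
\partial_{\tau\tau}^2 \PP(\tau,\xi) = 12\,\tau^2 - 4\,b(\xi) = 12\,\bigl(\tau-\sigma(\xi)\bigr)\bigl(\tau+\sigma(\xi)\bigr),
\qquad
\sigma(\xi) \defeq \sqrt{b(\xi)/3\,}\,,
\]
and $b(\xi)\ge0$ by~\eqref{E-WH}, so $\sigma$ is real. Definition~\ref{D-pd} is stated for monic polynomials, so I would apply it to $\frac{1}{12}\partial_{\tau\tau}^2\PP$; the constant factor only rescales the $\ell_k$ and does not affect boundedness. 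The reduced polynomials in~\eqref{E-reduced} are then $\tau+\sigma(\xi)$ and $\tau-\sigma(\xi)$.

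A proper decomposition~\eqref{E-dec ell 0} therefore means finding $\ell_1,\ell_2\in L^\infty(\R^n)$ with
\[
c_{1,0}\,\tau + c_{0,1}(\xi) = \ell_1(\xi)\,\bigl(\tau+\sigma(\xi)\bigr) + \ell_2(\xi)\,\bigl(\tau-\sigma(\xi)\bigr).
\]
Identifying the coefficients of $\tau^1$ and $\tau^0$ gives the linear system
\[
\ell_1+\ell_2 = c_{1,0}, \qquad \sigma\,(\ell_1-\ell_2) = c_{0,1}.
\]

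Necessity is immediate from this system. If the $\ell_k$ are bounded, then $|c_{0,1}| \le \sigma\,(|\ell_1|+|\ell_2|) \lesssim \sqrt{b}$, which is~\eqref{E-pd1}.

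For sufficiency, assume~\eqref{E-pd1}. I would set
\[
\ell_{1,2} = \tfrac12\Bigl(c_{1,0} \pm \Dfrac{c_{0,1}}{\sigma}\Bigr),
\]
using the $\Dfrac{\cdot}{\cdot}$ notation so that the quotient is taken to be $0$ where $\sigma=0$. Condition~\eqref{E-pd1} says precisely that $\bigl|\Dfrac{c_{0,1}}{\sigma}\bigr| \lesssim 1$. It also forces $c_{0,1}=0$ wherever $b=0$, so the identity $c_{0,1} = \Dfrac{c_{0,1}}{\sigma}\,\sigma$ still holds at those points. The coefficient $c_{1,0}$ does not depend on~$\xi$ (it is the constant coefficient $c_{1,\nu}$ with $|\nu|=0$), hence it is bounded. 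Therefore $\ell_1,\ell_2\in L^\infty$.

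The only delicate point is the degenerate set $\{b=0\}$. There the two roots coincide, the reduced polynomials are both equal to~$\tau$, and the system can only be solved if $c_{0,1}$ vanishes; the convention for $\Dfrac{\cdot}{\cdot}$ together with~\eqref{E-pd1} handles this. Measurability of the $\ell_k$ follows from the continuity of $b$ and $c_{0,1}$.
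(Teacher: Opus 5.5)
Your proof is correct and follows the paper's argument. You use the same factorization $\partial_{\tau\tau}^2\PP = 12\bigl(\tau-\sqrt{b/3}\bigr)\bigl(\tau+\sqrt{b/3}\bigr)$ and the same bounded coefficients $\frac12\bigl(c_{1,0}\pm c_{0,1}/\sqrt{b/3}\bigr)$ (equal to $\frac12 c_{1,0}$ where $b=0$) for sufficiency. The only cosmetic difference is in necessity: you read it off the constant term, $c_{0,1}=\sigma(\ell_1-\ell_2)$, whereas the paper evaluates at $\tau=\sqrt{b/3}$, which tacitly also uses that $c_{1,0}$ is bounded.
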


\begin{proof}
As
\[
\partial_{\tau\tau}^2 \PP(\tau,\xi)
  =  12 \, \tau^2 - 4 \, b(\xi)
  =  12 \, \Bigl[\tau-\sqrt{b(\xi)/3 \, }\Bigr] \, \Bigl[\tau+\sqrt{b(\xi)/3 \, }\Bigr] \, ,
\]
$\RR_1$ has a proper decomposition w.r.t.~$\partial_{\tau\tau}^2 \PP(\tau,\xi)$
if, and only if,
\begin{equation} \label{E-propdec1}
c_{1,0}(\xi) \, \tau + c_{0,1}(\xi)
  =  \ell_1(\xi) \, \Bigl[\tau-\sqrt{b(\xi)/3 \, }\Bigr]
     +  \ell_2(\xi) \, \Bigl[\tau+\sqrt{b(\xi)/3 \, }\Bigr] \, ,
\end{equation}
for some $\ell_1,\ell_2 \in L^\infty$.

Setting $\tau=\sqrt{b(\xi)/3 \, }$ in~\eqref{E-propdec1},
we see that the boundness of~$\ell_2$ implies~\eqref{E-pd1}.

As
\[
\begin{split}
c_{1,0}(\xi) \, \tau + c_{0,1}(\xi)
&  =  \frac{1}{2} \, \Biggl[ c_{1,0}(\xi) - \Dfrac{c_{0,1}(\xi)}{\sqrt{b(\xi)/3 \, }} \Biggr] \,
                      \Bigl[ \tau-\sqrt{b(\xi)/3 \, } \Bigr]  \\
&     \qquad  +  \frac{1}{2} \, \Biggl[ c_{1,0}(\xi) + \Dfrac{c_{0,1}(\xi)}{\sqrt{b(\xi)/3 \, }} \Biggr] \,
                       \Bigl[ \tau+\sqrt{b(\xi)/3 \, } \Bigr] \, ,
\end{split}
\]
we see that if~\eqref{E-pd1} is satisfied
then~\eqref{E-propdec1} holds true.
\end{proof}

\subsection{Proper decomposition of~$\RR_2(\tau,\xi)$ w.r.t.~$\partial_\tau \PP(\tau,\xi)$}

\begin{Lemma} \label{L-R2}
The polynomial
\[
\RR_2(\tau,\xi)
  =  c_{2,0}(\xi) \, \tau^2 + c_{1,1}(\xi) \, \tau + c_{0,2}(\xi)
\]
has a proper decomposition w.r.t.~$\partial_\tau \PP(\tau,\xi)$ if, and only if,
\begin{subequations}
\begin{align}
\bigl|c_{0,2}(\xi)\bigr|
&  \lesssim  b(\xi) \label{E-pd21}  \\
\bigl|c_{1,1}(\xi)\bigr|
&  \lesssim  \sqrt{b(\xi) \, } \, . \label{E-pd22}
\end{align}
\end{subequations}
\end{Lemma}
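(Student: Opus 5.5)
We need to prove Lemma~\ref{L-R2}. The plan is to factor $\partial_\tau \PP(\tau,\xi) = 4\tau^3 - 4b\tau = 4\,\tau\,(\tau-\sqrt{b})(\tau+\sqrt{b})$, whose roots are $0,\pm\sqrt{b}$. A proper decomposition then means
\[
c_{2,0}\tau^2 + c_{1,1}\tau + c_{0,2}
  = \ell_1\,(\tau-\sqrt{b})(\tau+\sqrt{b}) + \ell_2\,\tau(\tau+\sqrt{b}) + \ell_3\,\tau(\tau-\sqrt{b})
\]
with $\ell_k\in L^\infty$. The constant factor $4$ can be absorbed into the $\ell_k$.

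\emph{Necessity.} Evaluate the identity at the three roots.

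At $\tau=0$ we get $c_{0,2} = -\ell_1 b$, hence $|c_{0,2}|\lesssim b$, which is \eqref{E-pd21}.

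Comparing the coefficients of $\tau^1$ gives $c_{1,1} = \sqrt{b}\,(\ell_2-\ell_3)$, hence $|c_{1,1}|\lesssim\sqrt b$, which is \eqref{E-pd22}.

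Both bounds follow at once from boundedness of the $\ell_k$.

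\emph{Sufficiency.} Assume \eqref{E-pd21}--\eqref{E-pd22} and solve explicitly, using the notation $\Dfrac{\cdot}{\cdot}$ so that the formulas also make sense where $b=0$. Matching the coefficients of $\tau^2$, $\tau$ and $1$ gives the system
\[
\ell_1+\ell_2+\ell_3 = c_{2,0},\qquad \sqrt b\,(\ell_2-\ell_3)=c_{1,1},\qquad -b\,\ell_1 = c_{0,2}.
\]
Set
\[
\ell_1 = -\Dfrac{c_{0,2}}{b},\qquad
\ell_2 = \tfrac12\Bigl[c_{2,0}-\ell_1+\Dfrac{c_{1,1}}{\sqrt b}\Bigr],\qquad
\ell_3 = \tfrac12\Bigl[c_{2,0}-\ell_1-\Dfrac{c_{1,1}}{\sqrt b}\Bigr].
\]

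These are bounded by the hypotheses, since $c_{2,0}$ is bounded (it is a constant coefficient, as are all the $c_{j,0}$ here). The identity holds pointwise: where $b\neq0$ by construction, and where $b=0$ because then $c_{0,2}=c_{1,1}=0$ by the hypotheses. In that case the right side reduces to $(\ell_1+\ell_2+\ell_3)\tau^2 = c_{2,0}\tau^2$.

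The only delicate point is the degenerate set $\{b=0\}$, where the three roots coalesce into a triple root at $0$. This is exactly why the $\Dfrac{\cdot}{\cdot}$ convention and the inequalities, rather than plain division, are needed. Beyond this, the argument is a routine Lagrange-interpolation computation mirroring the proof of the previous lemma.
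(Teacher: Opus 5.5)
Your proof is correct and follows the paper's argument. It uses the same factorization of $\partial_\tau\PP$, obtains \eqref{E-pd21} from $\tau=0$, and proves sufficiency with exactly the paper's explicit coefficients (up to relabeling the $\ell_k$), with the zero-denominator convention handling the set $\{b=0\}$. The one small difference is that you get \eqref{E-pd22} by matching $\tau$-coefficients, $c_{1,1}=\sqrt b\,(\ell_2-\ell_3)$, instead of evaluating at $\tau=\sqrt b$; this spares you the paper's separate treatment of the case $b=0$ and avoids having to use \eqref{E-pd21} and the boundedness of $c_{2,0}$ in the necessity step.
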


\begin{proof}
As
\[
\partial_\tau \PP(\tau)
  =  4 \, \tau^3 - 4 \, b(\xi) \, \tau
  =  4 \, \tau \, \Bigl[ \tau-\sqrt{b(\xi) \, } \Bigr] \,
              \Bigl[ \tau+\sqrt{b(\xi) \, } \Bigr] \, ,
\]
$\RR_2(\tau,\xi)$ has a proper decomposition w.r.t.~$\partial_\tau \PP(\tau,\xi)$ if, and only if,
\begin{equation} \label{E-propdec2}
\begin{split}
c_{2,0}(\xi) \, \tau^2 +  &  c_{1,1}(\xi) \, \tau + c_{0,2}(\xi)  \\
&  =  \ell_1(\xi) \, \tau \, \Bigl[ \tau-\sqrt{b(\xi) \, } \Bigr]
     +  \ell_2(\xi) \, \tau \, \Bigl[ \tau+\sqrt{b(\xi) \, } \Bigr]
     +  \ell_3(\xi) \, \bigl[ \tau^2-b(\xi) \bigr]
\end{split}
\end{equation}
for some $\ell_1,\ell_2,\ell_3 \in L^\infty$.

Setting $\tau=0$ in~\eqref{E-propdec2},
we see that the boundness of~$\ell_3(\xi) $
implies~\eqref{E-pd21}.

If $b(\xi)\equiv0$ then~\eqref{E-propdec2}
implies that $c_{1,1}(\xi)\equiv0 $ and $c_{0,2}(\xi)\equiv0$,
whereas if $b(\xi) \not\equiv0$,
setting $\tau=\sqrt{b(\xi) \, }$ in~\eqref{E-propdec2},
we see that the boundness of~$\ell_2(\xi)$
implies~\eqref{E-pd22}.

On the other side
writing
\begin{align*}
c_{2,0}(\xi) \, \tau^2 + c_{1,1}(\xi) \, \tau + c_{0,2}(\xi)
&  =  \frac{1}{2} \,
      \Bigl(c_{2,0}(\xi) - \Dfrac{c_{1,1}(\xi)}{\sqrt{b(\xi) \, }} + \Dfrac{c_{0,2}(\xi)}{b(\xi)}\Bigr) \,
      \tau \, \Bigl[ \tau - \sqrt{b(\xi) \, } \Bigr]  \\
&  \qquad
   +  \frac{1}{2} \,
      \Bigl(c_{2,0}(\xi) + \Dfrac{c_{1,1}(\xi)}{\sqrt{b(\xi) \, }} + \Dfrac{c_{0,2}(\xi)}{b(\xi)}\Bigr) \,
      \tau \, \Bigl[ \tau + \sqrt{b(\xi) \, } \Bigr]  \\
&  \qquad
   -  \Dfrac{c_{0,2}(\xi)}{b(\xi)} \, \bigl[ \tau^2 - b(\xi) \bigr] \, ,
\end{align*}
we see that if~\eqref{E-pd21} and~\eqref{E-pd22} are satisfied
then~\eqref{E-propdec2} holds true.
\end{proof}

\subsection{Proper decomposition of~$\RR_3(\tau,\xi)$ w.r.t.~$\PP(\tau,\xi)$}

\begin{Lemma}
The polynomial
\[
\RR_3(\tau,\xi)
  =  c_{3,0}(\xi) \, \tau^3 + c_{2,1}(\xi) \, \tau^2 + c_{1,2}(\xi) \, \tau + c_{0,3}(\xi)
\]
has a proper decomposition w.r.t.~$\PP(\tau,\xi)$ in~\eqref{E-P}
if, and only if,
\begin{align}
\bigl[b(\xi) \, c_{3,0}(\xi) + c_{1,2}(\xi)\bigr]^2
&  \lesssim  \delta(\xi) \label{E-Lcc1}  \\
\bigl[b(\xi) \, c_{2,1}(\xi) + c_{0,3}(\xi)\bigr]^2
&  \lesssim  b(\xi) \, \delta(\xi) \label{E-Lcc2}  \\
\bigl[c_{0,3}(\xi)\bigr]^2
&  \lesssim  a(\xi) \, b(\xi) \label{E-Lcc3}  \\
\bigl[c_{2,1}(\xi)\bigr]^2
&  \lesssim  b(\xi) \label{E-Lcc4} \, .
\end{align}
\end{Lemma}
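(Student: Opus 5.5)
We treat the proper decomposition of $\RR_3$ w.r.t.\ $\PP$ exactly as in the two previous lemmas: fix $\xi$, write the roots of $\PP$ in~\eqref{E-P} as $\pm\lambda_1,\pm\lambda_2$ with
\[
\lambda_1^2 = b+\sqrt{\delta\,}, \qquad \lambda_2^2 = b-\sqrt{\delta\,},
\qquad \lambda_1^2\lambda_2^2 = a, \qquad \lambda_1^2-\lambda_2^2 = 2\sqrt{\delta\,},
\]
and look for bounded $\ell_1,\dots,\ell_4$ with $\RR_3=\sum_k \ell_k \PP_{\wh{k}}$. The first step is to split $\RR_3$ into its odd part $c_{3,0}\tau^3+c_{1,2}\tau$ and even part $c_{2,1}\tau^2+c_{0,3}$. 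The reduced polynomials are $(\tau\pm\lambda_1)(\tau^2-\lambda_2^2)$ and $(\tau\pm\lambda_2)(\tau^2-\lambda_1^2)$. Hence the odd part must be a combination of $\tau(\tau^2-\lambda_2^2)$ and $\tau(\tau^2-\lambda_1^2)$ with coefficients $\ell_1+\ell_2$, $\ell_3+\ell_4$. The even part must be a combination of $\tau^2-\lambda_2^2$, $\tau^2-\lambda_1^2$ with coefficients $\lambda_1(\ell_1-\ell_2)$, $\lambda_2(\ell_3-\ell_4)$. So boundedness of the $\ell_k$ is equivalent to boundedness of the sums $\ell_1+\ell_2$, $\ell_3+\ell_4$, together with the even-part coefficients being $\lesssim\lambda_1$ and $\lesssim\lambda_2$ respectively.

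For the necessity we evaluate at the roots. Solving the $2\times2$ system for the odd part (a Lagrange interpolation in $\tau^2$), the coefficients are
\[
\pm\frac{c_{3,0}\lambda_j^2+c_{1,2}}{\lambda_1^2-\lambda_2^2}.
\]
Since $c_{3,0}\lambda_j^2+c_{1,2}=(bc_{3,0}+c_{1,2})\pm c_{3,0}\sqrt\delta$, boundedness forces $|bc_{3,0}+c_{1,2}|\lesssim\sqrt\delta$, which is \eqref{E-Lcc1}. For the even part, the coefficients are
\[
\pm\frac{c_{2,1}\lambda_j^2+c_{0,3}}{2\sqrt\delta},
\]
and we require the coefficient for $\tau^2-\lambda_2^2$ to be $\lesssim\lambda_1$ and the one for $\tau^2-\lambda_1^2$ to be $\lesssim\lambda_2$. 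This gives
\[
|c_{2,1}\lambda_1^2+c_{0,3}|\lesssim \lambda_1\sqrt\delta,\qquad |c_{2,1}\lambda_2^2+c_{0,3}|\lesssim \lambda_2\sqrt\delta .
\]
Conversely, these inequalities together with \eqref{E-Lcc1} define bounded $\ell_k$ via the $\Dfrac{\cdot}{\cdot}$ convention. The degenerate cases ($\delta=0$, $\lambda_2=0$, $b=0$) will be checked separately by direct substitution, as in Lemma~\ref{L-R2}.

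The main work is showing that the pair of even-part inequalities is equivalent to \eqref{E-Lcc2}--\eqref{E-Lcc4}, using $\lambda_1^2\simeq b$ (since $b\le\lambda_1^2\le2b$), $\lambda_2^2=a/\lambda_1^2\simeq a/b$ and $\sqrt\delta\le b$. Adding and subtracting the two quantities, we write
\[
c_{2,1}\lambda_j^2+c_{0,3} = (bc_{2,1}+c_{0,3}) \pm c_{2,1}\sqrt\delta .
\]
For the direction from the paper's conditions to the root conditions, the term $bc_{2,1}+c_{0,3}$ is controlled by \eqref{E-Lcc2}, since $\sqrt{b\delta}\simeq\lambda_1\sqrt\delta\ge\lambda_2\sqrt\delta$ only handles $\lambda_1$. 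For the $\lambda_2$ inequality we use instead
\[
c_{2,1}\lambda_2^2+c_{0,3} = c_{2,1}\lambda_2^2 + c_{0,3},
\]
bounded by $\sqrt b\,\lambda_2^2+\sqrt{ab}$ via \eqref{E-Lcc4}, \eqref{E-Lcc3}. We then need $\sqrt{ab}\lesssim\lambda_2\sqrt\delta$, which holds when $\delta\gtrsim b^2$ but fails near $\delta=0$. So the $\lambda_2$ estimate genuinely needs to combine \eqref{E-Lcc2} and \eqref{E-Lcc3} through a case split: $\lambda_2\ll\lambda_1$, i.e.\ $a\ll b^2$ and $\sqrt\delta\simeq b$, versus $\lambda_2\simeq\lambda_1$, where the two conditions are nearly the same. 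I expect this case analysis, together with the reverse implication (extracting \eqref{E-Lcc3} and \eqref{E-Lcc4} separately from the two root inequalities by taking suitable combinations weighted by $\lambda_1^2$ and $\lambda_2^2$), to be the main obstacle. I would first try to express $c_{2,1}$ and $c_{0,3}$ linearly in terms of the two root quantities $E_j = c_{2,1}\lambda_j^2+c_{0,3}$ by Cramer's rule:
\[
c_{2,1} = \frac{E_1-E_2}{2\sqrt\delta},\qquad c_{0,3} = \frac{\lambda_1^2E_2-\lambda_2^2E_1}{2\sqrt\delta},
\]
and bound each in both regimes.
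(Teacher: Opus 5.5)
Your reduction is correct, and the step you flag as the main obstacle does close. Your route is also closer to the paper's than it may look.

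\textbf{Relation to the paper's proof.} In $\Omega_1$ the paper evaluates \eqref{E-propdec3} at $\pm\tau_j$ and adds the squares of the corresponding $\ell$'s. This cancels the cross terms between the odd part $c_{3,0}\tau^3+c_{1,2}\tau$ and the even part $c_{2,1}\tau^2+c_{0,3}$. So its $\I_1+\I_2$ and $\II_1+\II_2$ are exactly your odd and even contributions. In your notation, with $A,B$ the sums and $M,N$ the even-part coefficients,
\[
\sum_k\ell_k^2=\tfrac{1}{2}(A^2+B^2)+\frac{M^2}{2\lambda_1^2}+\frac{N^2}{2\lambda_2^2} .
\]
The odd part gives \eqref{E-Lcc1} in both treatments.

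\textbf{The step you leave open.} The paper closes the even part with an exact identity. For $a>0$, $\delta>0$, using $\lambda_1^2\lambda_2^2=a$, $\lambda_1^2+\lambda_2^2=2b$ and $b^2=a+\delta$, one has
\[
\frac{E_1^2}{\lambda_1^2\,\delta}+\frac{E_2^2}{\lambda_2^2\,\delta}
  =\frac{2\,(b\,c_{2,1}+c_{0,3})^2}{b\,\delta}+\frac{2\,c_{0,3}^2}{a\,b} \, .
\]
Both implications then follow at once, since all terms are nonnegative. Condition \eqref{E-Lcc4} follows from \eqref{E-Lcc2} and \eqref{E-Lcc3} because $b>0$.

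Your case split also works, in a few lines.
\begin{itemize}
\item Reverse direction: your Cramer formulas give $|c_{2,1}|\lesssim\lambda_1+\lambda_2\lesssim\sqrt b$ and $|c_{0,3}|\lesssim\lambda_1\lambda_2(\lambda_1+\lambda_2)\lesssim\sqrt{ab}$. Moreover $b\,c_{2,1}+c_{0,3}=\frac{1}{2}(E_1+E_2)$, which gives \eqref{E-Lcc2}.
\item Forward bound on $E_2$, case $\sqrt\delta\ge b/2$: we have $\sqrt b\,\lambda_2^2\le b\,\lambda_2\le 2\lambda_2\sqrt\delta$ and $\sqrt{ab}=\lambda_1\lambda_2\sqrt b\le\sqrt2\,b\,\lambda_2\le 2\sqrt2\,\lambda_2\sqrt\delta$.
\item Forward bound on $E_2$, case $\sqrt\delta<b/2$: then $\lambda_2^2>b/2$, and $|E_2|\le|b\,c_{2,1}+c_{0,3}|+|c_{2,1}|\sqrt\delta\lesssim\sqrt{b\,\delta}\lesssim\lambda_2\sqrt\delta$ by \eqref{E-Lcc2} and \eqref{E-Lcc4}.
\end{itemize}
The identity buys a case-free, two-sided argument. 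Your version shows more transparently which Levi condition controls which pair of roots.

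\textbf{Degenerate zones.} In $\Omega_2$--$\Omega_4$ the paper gives separate explicit decompositions. There the $\ell_k$ are no longer unique, so your sentence ``boundedness of the $\ell_k$ is equivalent to boundedness of the sums\dots'' must be rephrased. State it as the \emph{existence} of bounded $A,B$, and of $M,N$ with $|M|\lesssim\lambda_1$, $|N|\lesssim\lambda_2$, solving the two $2\times2$ systems.

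With that formulation your parity splitting treats these zones uniformly.
\begin{itemize}
\item For $\delta=0$ both systems collapse onto multiples of $\tau^2-b$. This forces $b\,c_{3,0}+c_{1,2}=0$, $b\,c_{2,1}+c_{0,3}=0$ and $|c_{2,1}|\lesssim\sqrt b$.
\item For $a=0<\delta$ one has $\lambda_2=0$, hence $N=0$. This forces $c_{0,3}=0$, $|c_{2,1}|\lesssim\sqrt b$ and $|c_{1,2}|\lesssim b$.
\item For $a=\delta=0$, everything except $c_{3,0}$ must vanish.
\end{itemize}
These are exactly the paper's reduced conditions \eqref{E-Lcc1p}--\eqref{E-Lcc4p}, \eqref{E-Lcc1pp}--\eqref{E-Lcc3pp} and \eqref{E-pd3IV}.
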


\begin{proof}
We denote by $\pm\tau_1(\xi)$ and $\pm\tau_2(\xi)$ the roots
of the polynomial~$\tau \mapsto \PP(\tau,\xi)$,
with
\[
\bigl((\tau_1(\xi)\bigr)^2
  =  b(\xi) - \sqrt{\delta(\xi)\,} \, ,
\qquad
\bigl(\tau_2(\xi)\bigr)^2 = b(\xi) + \sqrt{\delta(\xi)\,} \, .
\]

By definition, $\RR_3(\tau,\xi)$ has a proper decomposition w.r.t.~$\PP(\tau,\xi)$ if, and only if,
\begin{multline} \label{E-propdec3}
c_{3,0}(\xi) \, \tau^3 + c_{2,1}(\xi) \, \tau^2 + c_{1,2}(\xi) \, \tau + c_{0,3}(\xi)  \\
  =  \ell_2(\xi) \, \PPP{1}{+}(\tau,\xi) + \ell_1(\xi) \, \PPP{1}{-}(\tau,\xi)
     + \ell_4(\xi) \, \PPP{2}{+}(\tau,\xi) + \ell_3(\xi) \, \PPP{2}{-}(\tau,\xi)
\end{multline}
where
\begin{align*}
\PPP{1}{+}(\tau,\xi)
&  \defeq  \frac{\PP(\tau,\xi)}{\tau-\tau_1(\xi)}
    =  \bigl(\tau+\tau_1(\xi)\bigr)\bigl(\tau^2-\tau_2^2(\xi)\bigr)  \\
\PPP{1}{-}(\tau,\xi)
&  \defeq  \frac{\PP(\tau,\xi)}{\tau+\tau_1(\xi)}
    =  \bigl(\tau-\tau_1(\xi)\bigr)\bigl(\tau^2-\tau_2^2(\xi)\bigr)  \\
\PPP{2}{+}(\tau,\xi)
&  \defeq  \frac{\PP(\tau,\xi)}{\tau-\tau_2(\xi)}
    =  \bigl(\tau+\tau_2(\xi)\bigr)\bigl(\tau^2-\tau_1^2(\xi)\bigr)  \\
\PPP{2}{-}(\tau,\xi)
&  \defeq  \frac{\PP(\tau,\xi)}{\tau+\tau_2(\xi)}
    =  \bigl(\tau-\tau_2(\xi)\bigr)\bigl(\tau^2-\tau_1^2(\xi)\bigr)
\end{align*}
are the reduced polynomials of~$\PP(\tau,\xi)$,
and $\ell_1,\ell_2,\ell_3,\ell_4 \in L^\infty$.

We have to split $\R^n$
into four zones:
\begin{align*}
\Omega_1
&  =  \Bigl\{ \ \xi \in \R^n \ \Bigm|
              \ a(\xi) > 0 \text{ and } \delta(\xi) > 0 \ \Bigr\}  \\
\Omega_2
&  =  \Bigl\{ \ \xi \in \R^n \ \Bigm|
              \ a(\xi) > 0 \text{ and } \delta(\xi) = 0 \ \Bigr\}  \\
\Omega_3
&  =  \Bigl\{ \ \xi \in \R^n \ \Bigm|
              \ a(\xi) = 0 \text{ and } \delta(\xi) > 0 \ \Bigr\}  \\
\Omega_4
&  =  \Bigl\{ \ \xi \in \R^n \ \Bigm|
              \ a(\xi) = 0 \text{ and } \delta(\xi) = 0 \ \Bigr\} \, .
\end{align*}

It's clear that the functions $\ell_j$s in~\eqref{E-propdec3}
are bounded in~$\zeroT\times\R^n$ if, and only if,
they are bounded in~each~$\Omega_j$, for $j=1,2,3,4$;
analogously estimates~\eqref{E-Lcc1}--\eqref{E-Lcc4} hold true
in~$\zeroT\times\R^n$ if, and only if,
they hold in~each~$\Omega_j$, for $j=1,2,3,4$.

\smallskip

Note that if $b(\xi)>0$, that is if~$\xi\in\Omega_1\cup\Omega_2\cup\Omega_3$,
condition~\eqref{E-Lcc4} is a consequence of~\eqref{E-Lcc2} and~\eqref{E-Lcc3},
since
\begin{align*}
c_{2,1}^2(\xi)
&  =  \frac{\bigl[b(\xi) \, c_{2,1}(\xi)\bigr]^2}{b^2(\xi)}  \\
&  \lesssim  \frac{\bigl[b(\xi) \, c_{2,1}(\xi) + c_{0,3}(\xi)\bigr]^2 + c_{0,3}^2(\xi)}{b^2(\xi)}  \\
&  \lesssim  \frac{b(\xi) \, \delta(\xi) + a(\xi) \, b(\xi)}{b^2(\xi)}
   =  b(\xi) \, .
\end{align*}

However as the previous calculation can be carried out only if $b(\xi)>0$,
we cannot omit~\eqref{E-Lcc4} since~\eqref{E-Lcc2} and~\eqref{E-Lcc3}
will not ensure that $c_{2,1}(\xi)$ vanishes if $b(\xi)$ does.

\subsubsection*{Case~I. $\xi \in \Omega_1$: $a(\xi) > 0$ and $\delta(\xi) > 0$}

In this case the roots of~$\PP(\tau,\xi)$ are
different from each other
and different from $0$.
Thus if we set in~\eqref{E-propdec3}
$\tau=\tau_1$,
$\tau=-\tau_1$,
$\tau=\tau_2$,
$\tau=-\tau_2$
we get
\begin{align*}
\RR_3(\tau_1)
&  =  \ell_1(\xi) \, \PPP{1}{+}(\tau_1) \, , &
\RR_3(-\tau_1)
&  =  \ell_2(\xi) \, \PPP{1}{-}(-\tau_1) \, ,  \\
\RR_3(\tau_2)
&  =  \ell_3(\xi) \, \PPP{2}{+}(\tau_2) \, , &
\RR_3(-\tau_2)
&  =  \ell_4(\xi) \, \PPP{2}{-}(-\tau_2)
\end{align*}
and the left hand sides are different from zero for any~$\xi\in\Omega_1$.

We have
\[
\biggl(\frac{\RR_3(\tau_j)}{\PPP{j}{+}(\tau_j)}\biggr)^2
      +  \biggl(\frac{\RR_3(-\tau_j)}{\PPP{j}{-}(-\tau_j)}\biggr)^2
  =  \frac{[c_{3,0} \, \tau_j^3+ c_{1,2} \, \tau_j]^2}{2\,\tau_j^2(\tau_1^2 - \tau_2^2 )^2}
       +  \frac{[c_{2,1} \, \tau_j^2 + c_{0,3} ]^2}{2\,\tau_j^2(\tau_1^2 - \tau_2^2 )^2}
  =  \I_j + \II_j \, ,
\]
for $j=1,2$.

Since
\begin{eqnarray*}
\tau_1^2 + \tau_2^2 = 2\,b
&  \text{and}  &
\tau_1^2 \, \tau_2^2 = a \, ,  \\
\noalign{\hspace*{-\parindent}
we have}
(\tau_1^2 - \tau_2^2)^2 = 4\,\delta
&  \text{and}  &
\tau_1^4 + \tau_2^4 = 2\,(b^2 + \delta) \, ,
\end{eqnarray*}
hence
\begin{align*}
\I_1 + \I_2
&  =  \frac{[c_{3,0} \, \tau_1^2+ c_{1,2}]^2 + [c_{3,0} \, \tau_2^2+ c_{1,2}]^2}{8\,\delta}  \\
&  =  \frac{c_{3,0}^2 \, [ \tau_1^4 + \tau_2^4 ] + 2\,c_{1,2} \, c_{3,0} [ \tau_1^2 + \tau_2^2 ] + 2 \, c_{1,2}^2}{8\,\delta}  \\
&  =  \frac{2 \, c_{3,0}^2 \, (b^2 + \delta) + 4 \, c_{1,2} \, c_{3,0} \, b + 2 \, c_{1,2}^2}{8\,\delta}  \\
&  =  \frac{[b \, c_{3,0} + c_{1,2}]^2}{4\,\delta} + \frac{1}{4} \, c_{3,0}^2 \, ,
\end{align*}
and
\begin{align*}
\II_1 + \II_2
&  =  \frac{[c_{2,1}\tau_1^2 + c_{0,3}]^2}{8 \, \tau_1^2 \delta}
      +  \frac{[c_{2,1}\tau_2^2 + c_{0,3}]^2}{8 \, \tau_2^2 \delta}  \\
&  =  \frac{\tau_2^2 \, [c_{2,1}\tau_1^2 + c_{0,3}]^2 + \tau_2^1 \, [c_{2,1}\tau_2^2 + c_{0,3}]^2}
           {8 \, \tau_1^2 \tau_2^2 \delta}  \\
&  =  \frac{\tau_1^2\tau_2^2 \, (\tau_1^2+\tau_2^2) \, c_{2,1}^2
              + 4 \, c_{2,1} \, c_{0,3} \, \tau_1^2\tau_2^2
              + c_{0,3} \, (\tau_1^2+\tau_2^2)}
           {8 \, \tau_1^2 \tau_2^2 \delta}  \\
&  =  \frac{2\,a\,b \, c_{2,1}^2
              + 4 \, a \, c_{2,1} \, c_{0,3}
              + 2\,b \, c_{0,3}}
           {8 \, a \, \delta}  \\
&  =  \frac{a\,b^2 \, c_{2,1}^2
              + 2 \, a \, b \, c_{2,1} \, c_{0,3}
              + b^2 \, c_{0,3}}
           {4 \, a \, b \, \delta}  \\
&  =  \frac{a\,[b \, c_{2,1} + c_{0,3} ]^2
              + (b^2-a) \, c_{0,3}}
           {4 \, a \, b \, \delta}  \\
&  =  \frac{[b \, c_{2,1} + c_{0,3} ]^2}
           {4 \, b \, \delta}
   +  \frac{1}{4} \, \frac{c_{0,3}}{a\,b} \, .
\end{align*}

In conclusion
\begin{align*}
\ell_1(\xi)^2 +  & \ell_2(\xi)^2 + \ell_3^2(\xi) + \ell_4^2(\xi)  \\
&  =  \biggl(\frac{\RR_3(\tau_1)}{\PPP{1}{+}(\tau_1)}\biggr)^2
      +  \biggl(\frac{\RR_3(-\tau_1)}{\PPP{1}{-}(-\tau_1)}\biggr)^2
      +  \biggl(\frac{\RR_3(\tau_2)}{\PPP{2}{+}(\tau_2)}\biggr)^2
      +  \biggl(\frac{\RR_3(-\tau_2)}{\PPP{2}{-}(-\tau_2)}\biggr)^2  \\
&  =  \frac{1}{4} \,
      \biggl[\frac{[b(\xi) \, c_{2,1}(\xi)  + c_{0,3}(\xi) ]^2}{b(\xi) \, \delta(\xi)}
             + \frac{c_{0,3}^2(\xi)}{a(\xi) \, b(\xi)}
             + \frac{[c_{1,2}(\xi)+b(\xi) \, c_{3,0}(\xi) ]^2}{\delta(\xi)}
             + c_{3,0}^2(\xi)\biggr] \, ,
\end{align*}
hence the boundness of~the $\ell_j$s is equivalent
to~\eqref{E-Lcc1}--\eqref{E-Lcc3}.
\end{proof}

\subsubsection*{Case~II. $\xi \in \Omega_2$: $a(\xi) > 0$ and $\delta(\xi) = 0$}

In this case $\tau_1(\xi)=\tau_2(\xi)=\sqrt{b(\xi) \, }$ with $b(\xi)\ne0$;
\eqref{E-Lcc1}--\eqref{E-Lcc4} reduce to
\begin{align}
b(\xi) \, c_{3,0}(\xi) + c_{1,2}(\xi)
&  \equiv  0  \labelp{E-Lcc1}  \\
b(\xi) \, c_{2,1}(\xi) + c_{0,3}(\xi)
&  \equiv  0  \labelp{E-Lcc2}  \\
\bigl[c_{0,3}(\xi)\bigr]^2
&  \lesssim  b^3(\xi) \labelp{E-Lcc3}  \\
\bigl[c_{2,1}(\xi)\bigr]^2
&  \lesssim  b(\xi) \labelp{E-Lcc4} \, .
\end{align}
Note that condition~\eqref{E-Lcc3p} is a~consequence of~\eqref{E-Lcc2p} and~\eqref{E-Lcc4p}.

{}From the identity
\begin{align*}
c_{3,0}(\xi) \, \tau^3 +  &  c_{2,1}(\xi) \, \tau^2 + c_{1,2}(\xi) \, \tau + c_{0,3}(\xi)  \\
&  =  \frac{1}{2} \,
      \Bigl(c_{3,0}(\xi) + \frac{c_{2,1}(\xi) }{\sqrt{b(\xi) \, }}\Bigr) \,
      \Bigl[ \tau+\sqrt{b(\xi) \, } \Bigr]
      \bigl[ \tau^2-b(\xi) \bigr]  \\
&  \qquad
      +  \frac{1}{2} \,
         \Bigl(c_{3,0}(\xi) - \frac{c_{2,1}(\xi) }{\sqrt{b(\xi) \, }}\Bigr) \,
         \Bigl[ \tau-\sqrt{b(\xi) \, } \Bigr]
         \bigl[ \tau^2-b(\xi) \bigr]  \\
&  \qquad  +  (b(\xi) \, c_{3,0}(\xi) + c_{1,2}(\xi)) \, \tau + b(\xi) \, c_{2,1}(\xi) +c_{0,3}(\xi) \, ,
\end{align*}
we see that if~\eqref{E-Lcc1p},~\eqref{E-Lcc2p} and~\eqref{E-Lcc4p} are~satisfied,
then~we have the decomposition~\eqref{E-propdec3} with bounded coefficients.

On the other side, as $\tau_1^2(\xi)=\tau_2^2(\xi)=b(\xi)$,
the reduced polynomials of~$\PP(\tau,\xi)$ are all divisible by~$\tau^2-b(\xi)$,
hence in~order to have the decomposition~\eqref{E-propdec3}
$\RR_3(\tau,\xi)$ should be divisible by $\tau^2-b(\xi)$ too.
{}From the identity
\begin{align*}
c_{3,0}(\xi) \, \tau^3 +  &  c_{2,1}(\xi) \, \tau^2 + c_{1,2}(\xi) \, \tau + c_{0,3}(\xi)  \\
&  =  \bigl(c_{3,0}(\xi) \, \tau + c_{2,1}(\xi)\bigr) \, \bigl[ \tau^2-b(\xi) \bigr]  \\
&  \qquad  +  \bigl(b(\xi) \, c_{3,0}(\xi)+c_{1,2}(\xi)\bigr) \, \tau
      +  b(\xi) \, c_{2,1}(\xi)+c_{0,3}(\xi)
\end{align*}
we see that this means that~\eqref{E-Lcc1p} and~\eqref{E-Lcc2p} should be~satisfied.

If~\eqref{E-Lcc1p} and~\eqref{E-Lcc2p} hold true,
identity~\eqref{E-propdec3} reduces to
\[
\bigl(c_{3,0}(\xi) \, \tau + c_{2,1}(\xi)\bigr) \, \bigl[ \tau^2-b(\xi) \bigr]  \\
  =  \ell_1(\xi) \, \Bigl[ \tau-\sqrt{b(\xi) \, } \Bigr]\bigl[ \tau^2-b(\xi) \bigr]
     + \ell_2(\xi) \, \Bigl[ \tau+\sqrt{b(\xi) \, } \Bigr]\bigl[ \tau^2-b(\xi) \bigr] \, .
\]
Dividing by $\tau^2-b(\xi)$ and letting $\tau\to\sqrt{b(\xi) \, }$
we~see that,
in order to have the decomposition~\eqref{E-propdec3},
\eqref{E-Lcc4p} should be~satisfied too.

\subsubsection*{Case~III. $\xi \in \Omega_3$: $a(\xi) = 0$ and $\delta(\xi) > 0$}

In this case  $\tau_1(\xi)=\sqrt{2 \, b(\xi) \, } \ne 0$, $\tau_2(\xi) = 0$,
and~\eqref{E-Lcc1}--\eqref{E-Lcc3} reduce to
\begin{align}
\bigl|c_{1,2}(\xi)\bigr|
&  \lesssim  b(\xi) \labelpp{E-Lcc1}  \\
\bigl|c_{2,1}(\xi)\bigr|
&  \lesssim  \sqrt{b(\xi) \, } \labelpp{E-Lcc2}  \\
c_{0,3}(\xi)
&  = 0  \labelpp{E-Lcc3} \, .
\end{align}
Condition~\eqref{E-Lcc4} is a~consequence of~\eqref{E-Lcc2pp} and~\eqref{E-Lcc3pp}.

{}From the identity
\begin{align*}
c_{3,0}(\xi) \, \tau^3 +  &  c_{2,1}(\xi) \, \tau^2 + c_{1,2}(\xi) \, \tau + c_{0,3}(\xi)  \\
&  =  \frac{1}{2} \,
      \Bigl(c_{3,0}(\xi) - \frac{c_{2,1}(\xi) }{\sqrt{2 \, b(\xi) \, }} + \frac{c_{1,2}(\xi)}{2 \, b(\xi)}\Bigr) \,
      \tau^2 \, \Bigl[ \, \tau-\sqrt{2 \, b(\xi) \, } \, \Bigr]  \\
&  \qquad  +  \frac{1}{2} \,
              \Bigl(c_{3,0}(\xi) + \frac{c_{2,1}(\xi) }{\sqrt{2 \, b(\xi) \, }}
                                 + \frac{c_{1,2}(\xi)}{2 \, b(\xi)}\Bigr) \,
              \tau^2 \, \Bigl[ \, \tau+\sqrt{2 \, b(\xi) \, } \, \Bigr]  \\
&  \qquad  -  \frac{c_{1,2}(\xi)}{2 \, b(\xi)} \, \tau \, \bigl(\tau^2-2 \, b(x)\bigr) + c_{0,3}(\xi) \, .
\end{align*}
we see that if~\eqref{E-Lcc1pp}--\eqref{E-Lcc3pp} are satisfied,
then~\eqref{E-propdec3} holds true and the coefficients $\ell_j$ are bounded.

On the other side,
as all the reduced polynomials of~$\PP(\tau,\xi)$ are divisible by~$\tau$,
in order to have the decomposition~\eqref{E-propdec3},
$\RR_3(\tau,\xi)$ should be divisible by $\tau$ too; this means $c_{0,3}(\xi)=0$.
Dividing~\eqref{E-propdec3} by~$\tau$,
we~get
\begin{align*}
c_{3,0}(\xi) \, \tau^2 +  &  c_{2,1}(\xi) \, \tau + c_{1,2}(\xi)  \\
&  =  \ell_1(\xi) \, \tau \, \Bigl[ \, \tau-\sqrt{2 \, b(\xi) \, } \, \Bigr]
     + \ell_2(\xi) \, \tau \, \Bigl[ \, \tau+\sqrt{2 \, b(\xi) \, } \, \Bigr]
     + \ell_3(\xi) \, \bigl(\tau^2-2 \, b(x)\bigr) \, ;
\end{align*}
that means that the polynomial on the l.h.s.\
has a proper decomposition w.r.t.~the polynomial $\tau^3-2 \, b(\xi) \, \tau$.
Repeating the same argument of~the proof of~Lemma~\ref{L-R2},
we~can show the necessity of~\eqref{E-Lcc1pp} and~\eqref{E-Lcc2pp}.

\subsubsection*{Case~IV. $\xi \in \Omega_4$: $a(\xi) = \delta(\xi) = 0$}

In this case  $\tau_1(\xi)=\tau_2(\xi)=0$,
and~\eqref{E-Lcc1}--\eqref{E-Lcc3} reduce to
\begin{equation} \label{E-pd3IV}
c_{2,1}(\xi) = c_{1,2}(\xi) = c_{0,3}(\xi) = 0 \, .
\end{equation}

As all the reduced polynomials of~$\PP(\tau,\xi)$ are equal to~$\tau^3$,
we have the decomposition~\eqref{E-propdec3}
if, and only if, $\RR_3(\tau,\xi)$ is divisible by $\tau^3$,
which means that~\eqref{E-pd3IV} is satisfied.

\section*{Acknowledgements}

Both authors are members of the
Gruppo Nazionale per l'Analisi Matematica, la Probabilit\`{a} e~le loro Applicazioni
(GNAMPA-INDAM).

The first author~has been partially supported by the PRIN 2022 project
``Anomalies in partial differential equations and applications''
CUP~D53C24003370006 .

\providecommand{\bysame}{\leavevmode\hbox to3em{\hrulefill}\thinspace}
\renewcommand{\MR}[1]{}

\end{document}